\newtheorem{theorem}{Theorem}[section]
\newtheorem{corollary}{Corollary}[theorem]
\newtheorem{lemma}[theorem]{Lemma}
\newtheorem{proposition}[theorem]{Proposition}
\newtheorem{remark}{Remark}[section]
\title{On the Number of Vertices in Hyperplane Cross-Sections \\ of Convex Polytopes}
\author[1]{Jes\'us A. De Loera}
\author[1]{Gyivan Lopez-Campos \footnote{Corresponding author}}
\author[1]{Antonio J. Torres}
\affil[1]{Department of Mathematics, University of California, Davis.}
\date{}                     %% if you don't need date to appear
\date{} % Opcional: puedes eliminar la fecha o personalizarla
\begin{document}
\maketitle

% Información adicional
% \begin{center}
%     \textbf{Jesús A. De Loera, Gyivan López-Campos, Antonio J. Torres} \\
%     Emails: \href{mailto:deloera@math.ucdavis.edu}{deloera@math.ucdavis.edu}, \href{mailto:gyivan.lopez@im.unam.mx}{gyivan.lopez@im.unam.mx}, \href{mailto:antor@ucdavis.edu}{antor@ucdavis.edu} \\
%     Antonio J. Torres: \href{https://orcid.org/0000-0002-9498-1692}{ORCID: 0000-0002-9498-1692}
% \end{center}

% \begin{abstract}
%     This research was partially supported by NSF Grant No. 2348578 and NSF Grant No. DMS-1929284 of ICERM.
% \end{abstract}

\maketitle

% Afiliaciones
% \begin{abstract}
% \noindent Jesús A. De Loera and Antonio J. Torres: Department of Mathematics, University of California, Davis, USA.\\
% Gyivan López-Campos: Instituto de Matemáticas, Universidad Nacional Autónoma de México, Mexico.
% \end{abstract}

\begin{abstract}
We study the combinatorics of the slices or sections of a convex polytope by affine hyperplanes. 
We present results on two key problems: First, we provide tight bounds on the maximum number of vertices attainable by a hyperplane slice of a $d$-polytope 
(a sort of upper bound theorem) and discuss a new algorithm to find all combinatorially distinct sections. Second, we investigate the sequence of numbers of vertices produced by the different slices over all possible hyperplanes and analyze the gaps that arise 
in that sequence. We study these sequences for three-dimensional polytopes, 
cyclic polytopes, and hypercubes. Some of our results were obtained with the 
help of large computational experiments, and we report new data generated for hypercubes. 
\end{abstract}

\section{Introduction}  
The study of the hyperplane slices or sections of a convex polytope dates back to at least the early 1800's with the work of Laplace; who studied formulas for the volume of slices of cubes \cite{laplace1835oeuvres}. Since then, the volumes and other analytics properties of slices or sections of convex polytopes have been extensively explored in the literature due to its significance to analysis and probability (see, e.g., \cite{ball2006volumes, koldobsky2023harmonic,GiannopoulosKoldobskyZvavitch2023, KlaMil22:SlicingProblem, klartag2024affirmativeresolutionbourgainsslicing,nayar+tkocz-2022extremalsecproj,pournin-scubesections} and the references therein). 
Only more recently has there been an effort to understand the combinatorial properties of convex polytope slices (see \cite{barany2023same,brandenburg2025best, fukuda1997sections, Khovanskii-slices2006, Lawrence79,o1971hyperplane,PadrolPfeifle2015} and the 
references therein). The purpose of this paper is to count vertices within slices.

In what follows all polytopes are convex, thus we often omit the convex adjective.
For a polytope $P$ we denote by $V(P)$ and $E(P)$ the sets of vertices and edges of $P$, respectively, and let $\mathcal{H}_P$ be the set of hyperplanes in $\mathbb{R}^d$ whose intersection with $P$ is non-empty. Formally, if $P$ is a polytope of dimension $d$ and $H\in \mathcal{H}_P$, then $H \cap P$  is called a \emph{slice} of $P$. Note that each slice of $P$ is an $n$-dimensional convex polytope with $n < d$. Note also a point $v$ is a vertex of $H \cap P$ if and only if either (i) $v$ is a vertex of $P$ lying in $H$, or (ii) $v$ is the unique interior point where $H$ intersects an edge $e$ of $P$.

For a positive integer $n$, we let $[n]=1,2,\dots,n$.
In this paper, we are devoted to studying the values of the counting function $cv_P:\mathcal{H}_P\rightarrow\mathbb{N}$, with $cv_P(H)=|V(H \cap P)|$. The authors of \cite{brandenburg2025best} provided the first general algorithm to compute the maximum of $cv_P$ (for a fixed $d$-polytope $P$), which we denote by $\nu(P)$. We address two fundamental problems:

\begin{itemize}
    \item Given a $d$-polytope $P$, what are tight bounds to its maximum $\nu(P)$? Computationally, there are prior algorithms to find optimal slices (those that reach $\nu(P)$) for fixed dimension, but the computational complexity of finding them, for an specific input polytope $P$ of non-fixed dimension remains unknown (see \cite{brandenburg2025best}). More generally, we aim to know the maximum values of $\nu(P)$ among all $d$-polytopes $P$ with $n$ vertices. Prior work on bounding the entries of the $f$-vector of slices of $P$ can be found in \cite{Khovanskii-slices2006}.

    \item Given a $d$-polytope $P$, the \emph{vertex slice sequence} of $P$, denoted by $VSS(P)$, is the sequence generated by the values in the image 
    of $cv_P$ (i.e., $cv_P(\mathcal{H}_P)$). A value $m\in [\nu(P)]$ that is missing from $VSS(P)$ is called a \emph{gap} of $P$.
    The questions we are interested in are: What is the structure of $VSS(P)$? When does a given polytope $P$ have gaps? For a fixed dimension, what is the maximum gap over all the $d$-polytopes? We focus on these questions for $3$-polytopes, cyclic polytopes and hypercubes. 
    
\end{itemize}

\section*{Our Contributions:} 

In Section \ref{Sec:Cylic},  we discuss the value of $\nu(P)$, for a fixed $d$-polytope $P$. We also show that the cyclic polytope maximizes $\nu(P)$, among all the $d$-polytopes. This result represents partial progress on a broader question (Open problem 7.7 in \cite{Khovanskii-slices2006}) on maximizing $i$-dimensional faces of slices of polytopes.

\begin{theorem} \label{Them:max vertices}
Let ${\cal P}_{d,n}$ be the set of $d$-dimensional convex polytopes with $n$ vertices. Then
\[
\max_{P \in \mathcal{P}_{d,n}} \nu(P)
\]
is attained by the cyclic polytope and it is equal to
$2(n-2)$ in dimension $d=3$ and it equals  $\lfloor n/2 \rfloor \lceil n/2 \rceil$ for dimension $d>3$.
\end{theorem}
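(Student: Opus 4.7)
The plan is to prove the upper bound and then exhibit cyclic polytopes achieving it, treating the cases $d=3$ and $d\geq 4$ separately. The basic identity to start from is $cv_P(H)=|V(P)\cap H|+|E_0(H)|$, where $E_0(H)$ is the set of edges of $P$ meeting $H$ in their relative interior. Writing $v_0=|V(P)\cap H|$ and $a,b$ for the numbers of vertices of $P$ strictly above and below $H$, one has $v_0+a+b=n$ and $|E_0(H)|\leq ab$, since each edge in $E_0(H)$ has one endpoint strictly above and one strictly below $H$.

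When $d\geq 4$ I would just optimize $v_0+ab$ subject to $v_0+a+b=n$: setting $m=n-v_0$, the function $(n-m)+\lfloor m/2\rfloor\lceil m/2\rceil$ is convex in $m\in\{0,1,\dots,n\}$ and so is maximized at $m=n$ (i.e.\ $v_0=0$), giving $cv_P(H)\leq \lfloor n/2\rfloor\lceil n/2\rceil$. When $d=3$ this bound overshoots, so instead I would use that $H\cap P$ is a convex polygon: its vertex count equals its edge count, and each polygon edge $H\cap F$ arises from a distinct $2$-face $F$ of $P$ meeting both open half-spaces, so $cv_P(H)\leq f_2(P)\leq 2n-4$ by the Euler relation for $3$-polytopes.

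For achievability, take the cyclic polytope $C_d(n)$ with vertices $v_i=(t_i,t_i^2,\dots,t_i^d)$ on the moment curve, $t_1<\dots<t_n$. For $d\geq 4$, $C_d(n)$ is $2$-neighborly, so every pair of vertices forms an edge; the hyperplane $\{x_1=c\}$ with $c$ chosen between $t_{\lfloor n/2\rfloor}$ and $t_{\lfloor n/2\rfloor+1}$ produces a balanced split and therefore crosses exactly $\lfloor n/2\rfloor\lceil n/2\rceil$ edges, matching the upper bound.

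The $d=3$ case is the main technical point, because $C_3(n)$ is not $2$-neighborly and a ``pick any balanced split'' argument fails. Here I would use Gale's evenness condition to observe that every facet of $C_3(n)$ must contain $v_1$ or $v_n$ (otherwise $(1,n)$ is a pair in the complement with all three facet vertices strictly between them, an odd count). Consequently, any hyperplane $H$ that separates $\{v_1,v_n\}$ from $\{v_2,\dots,v_{n-1}\}$ cuts all $2n-4$ facets, since each facet triangle then has at least one vertex in each open half-space. To realize such an $H$ as an actual affine plane, I would pick $r_1\in(t_1,t_2)$ and $r_2\in(t_{n-1},t_n)$ and take $H=\{x_2=(r_1+r_2)x_1-r_1r_2\}$; on the moment curve this plane has sign $\operatorname{sgn}((t-r_1)(t-r_2))$, which is positive precisely for $i\in\{1,n\}$. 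The delicate step is exactly this last one: one must check that the combinatorial partition prescribed by the facet analysis is in fact realizable as the trace of a hyperplane on the moment curve, which for $3$-polytopes forces the sign pattern to come from a polynomial of degree at most $3$ in the parameter $t$.
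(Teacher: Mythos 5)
Your proof is correct in substance but follows a genuinely different route from the paper's. For the upper bound with $d>3$ the paper first invokes the Upper Bound Theorem to reduce to $C_d(n)$ and uses its Lemma on perturbing $H$ off the vertices, whereas you bound $v_0+ab$ directly by convexity of $m\mapsto n-m+\lfloor m^2/4\rfloor$; your version needs neither the UBT nor the perturbation lemma and is arguably cleaner (just note that the relevant endpoint comparison is $\lfloor n^2/4\rfloor\geq f(1)=n-1$, which holds since $n\geq d+1\geq 5$). For $d=3$ the paper cuts the $1$-skeleton: a vertex-avoiding plane leaves two connected subgraphs, hence at least $n-2$ uncut edges out of $3n-6$; you instead inject the polygon's edges into the $2$-faces of $P$ and use $f_2\leq 2n-4$. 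Both give the same bound, but your injection argument as stated assumes each polygon edge comes from a $2$-face meeting both open half-spaces, which fails if $H$ contains an edge (or a facet) of $P$; you should first invoke the paper's Lemma~\ref{lemma 1} (or your own $v_0$ bookkeeping) to reduce to hyperplanes avoiding all vertices, after which the injection is clean. On attainment, your constructions are simpler than the paper's: for $d\geq4$ you exploit $2$-neighborliness and a coordinate hyperplane $\{x_1=c\}$ at a balanced split, while the paper builds the bisecting hyperplane through $d$ points of the moment curve via its Gale-evenness corollary; for $d=3$ both arguments separate $\{v_1,v_n\}$ from the remaining vertices, the paper realizing this with a cubic sign pattern (its family $\mathcal{C}$) and counting crossed edges directly, you with the quadratic $(t-r_1)(t-r_2)$ and the observation that every facet of $C_3(n)$ contains $v_1$ or $v_n$. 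The paper's extra machinery is not wasted, though: its Gale-evenness corollary and the families $\mathcal{A},\mathcal{B},\mathcal{C}$ yield the full vertex slice sequence of $C_d(n)$ (Theorem~\ref{cyclic polytope slices}), not just the maximum, which your leaner argument does not recover.
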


Furthermore, for the proof of Theorem \ref{Them:max vertices} we successfully describe the entire VSS for the cyclic polytope.\\

Regarding how to compute all slices of a polytope, in Section \ref{sec:Posets} we show a new algorithm to enumerate all combinatorially different slices  using a partially order set (poset). Although this algorithm is less efficient than the one given in \cite{brandenburg2025best}, it is a useful framework for proving theoretical results, as we demonstrate in Section \ref{Section:Hypercube} for the hypercube. 

% \begin{theorem}\label{Thm:antichain}
%     Let $P$ be a $d$-polytope. For each $u \in \mathbb{S}^{d-1}$, there exists a poset on the vertices and edges (not orthogonal to $u$) of $P$, such that for any hyperplane $H\in\mathcal{H}_P$ with normal vector $u$, the set of elements of the poset intersected by $H$ is a maximal antichain. \textcolor{blue}{reescribir enunciado}
% \end{theorem}

\begin{theorem}\label{Thm:antichain}
Let $P$ be a $d$-polytope. For each direction $u \in \mathbb{S}^{d-1}$, there exists a partial order on the vertices and the edges of $P$ that are not orthogonal to $u$, such that for any hyperplane $H \in \mathcal{H}_P$ with normal vector $u$, the set of vertices and edges intersected by $H$ forms a maximal antichain in this poset.
\end{theorem}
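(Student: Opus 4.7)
The plan is to fix the direction $u \in \mathbb{S}^{d-1}$, define the height function $h(x) = \langle u, x\rangle$, and associate to each poset element an open or singleton sub-interval of $\mathbb{R}$ that records the heights at which it can appear in a slice. Concretely, for a vertex $v$ set $R(v) = \{h(v)\}$, and for an edge $e$ with endpoints $v_1, v_2$ satisfying $h(v_1) < h(v_2)$ (which is exactly when $e$ is not orthogonal to $u$) set $R(e) = (h(v_1), h(v_2))$. Then I would declare $x \prec y$ iff $R(x) \cap R(y) = \emptyset$ and $\sup R(x) \leq \inf R(y)$, i.e.\ the two ranges are disjoint and $R(x)$ sits below $R(y)$.

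The first verification is that $\prec$ is a strict partial order. Irreflexivity is immediate, and transitivity follows from a short case analysis on element types (vertex--vertex, vertex--edge, edge--edge) using only the chain of inequalities $\sup R(x) \leq \inf R(y) \leq \sup R(y) \leq \inf R(z)$.

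For any $H \in \mathcal{H}_P$ with normal $u$, write $H = \{h = c\}$. By the characterization of the vertices of $H \cap P$ recalled in the introduction, the set of poset elements intersected by $H$ is exactly $A_H = \{v \in V(P) : h(v) = c\} \cup \{e : c \in R(e)\}$. The antichain property is then immediate: every element of $A_H$ has $c$ in its range, so any two of them have overlapping ranges and are incomparable.

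The heart of the argument, and the step I expect to be the main obstacle, is maximality. Given a poset element $z \notin A_H$, the range $R(z)$ lies entirely above or entirely below $c$; by symmetry assume strictly above. If $z$ is an edge with lower endpoint $v_1$, then $v_1 \prec z$ by construction, so it suffices to reduce to the case $z = v_1$ (the degenerate subcase $c = h(v_1)$ gives $v_1 \in A_H$ directly with $v_1 \prec z$). For $z = v$ a vertex with $h(v) > c$, if any vertex $w$ of $P$ lies on $H$ then $w \in A_H$ and $w \prec v$. Otherwise let $v^*$ minimize $h$ over the set $U$ of vertices of $P$ strictly above $H$; since $H$ actually slices $P$, the minimum of $h$ on $P$ is attained strictly below $H$, so $v^* \in U$ is not a global minimizer of $h$ on $P$. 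By the standard polytope-graph fact that any non-minimizing vertex admits a neighbor with strictly smaller functional value (a consequence of the simplex-method pivot rule, or equivalently of the fact that the edge-cone at a vertex spans the feasible-direction cone), $v^*$ has a neighbor $v^{**}$ with $h(v^{**}) < h(v^*)$. By minimality of $v^*$ in $U$ and the assumption that no vertex of $P$ lies on $H$, one gets $h(v^{**}) < c$; hence the edge $e^* = \{v^*, v^{**}\}$ lies in $A_H$ and satisfies $\sup R(e^*) = h(v^*) \leq h(v) = \inf R(v)$, giving $e^* \prec v$. This closes the maximality check and completes the proof.
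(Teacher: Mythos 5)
Your proof is correct, but the poset you construct is not the one the paper constructs, and the difference is worth spelling out. The paper's slicing poset $(P_u,\leq)$ is combinatorial: $(v_{i_1},v_{j_1})<(v_{i_2},v_{j_2})$ only if $j_1=i_2$ or there is a $u$-monotone path in the $1$-skeleton from $v_{j_1}$ to $v_{i_2}$. Your order is the interval order induced by the height ranges $R(\cdot)$, which creates strictly more comparabilities (two vertices at different heights are always comparable for you, but need not be joined by a monotone path). With this coarser order the antichain property becomes a one-line observation --- every intersected element has $c$ in its range --- and all the work shifts to maximality, where you invoke the same key polytope fact the paper isolates as Proposition~\ref{Proposition:Neighbors}: a vertex not minimizing $\langle u,\cdot\rangle$ over $P$ has a strictly improving neighbor, via the edge-cone description of the tangent cone. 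The paper instead uses that proposition, together with connectivity of the subgraphs induced on $H^{\pm}$, to chase comparabilities inside its finer poset, which makes its maximality argument longer. Since Theorem~\ref{Thm:antichain} is an existence statement, your construction proves it, and arguably more transparently. What it does not buy is the downstream payoff: the slicing poset is designed so that for the hypercube and a positive direction it coincides with O'Neil's poset (Proposition~\ref{Proposition:equivalenceQd}), which is then exploited in Lemma~\ref{lemma:central_edges} and the proof of Theorem~\ref{Thm:gap_penultimo}; your interval order does not specialize to O'Neil's poset (it declares any two vertices of distinct Hamming weight comparable), so it could not replace the slicing poset in Section~\ref{Section:Hypercube}. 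One step you should make explicit: in the subcase where no vertex of $P$ lies on $H$, the existence of vertices strictly below $c$ requires that $H$ is not a supporting hyperplane, which follows because a supporting hyperplane meets $P$ in a face and every nonempty face contains a vertex.
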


In Section \ref{Sec:GapsR3}, we prove the following theorem concerning the vertex sequences of slices of $3$-polytopes.  

\begin{theorem}\label{Thm:GAPSR3}
    There are infinitely many $3$-polytopes, which are only $3$-connected with at least one gap in their sequence of the number of vertices of slices. 
\end{theorem}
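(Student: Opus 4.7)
The plan is to construct an explicit infinite family $\{P_k\}_{k \geq k_0}$ of $3$-polytopes, each only $3$-connected and each with at least one gap in $VSS(P_k)$. The construction has two parts: first, exhibit a base polytope $P_0$ satisfying both conditions; second, iteratively extend it to obtain infinitely many such polytopes.

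For the base case, I would look for a $3$-polytope $P_0$ with a vertex of degree $3$ (so its graph is exactly $3$-connected by Steinitz's theorem) and a missing value in $VSS(P_0)$. Standard small examples (the tetrahedron, cube, octahedron, cyclic $3$-polytopes, $n$-prisms, $n$-pyramids, and $n$-bipyramids) can be checked to have full VSS without gaps, so $P_0$ must be a tailored polytope. Natural candidates come from combinatorial gluing of two smaller polytopes along a triangular face (realizable as a convex polytope via Steinitz's theorem, even when the literal geometric gluing is non-convex) or from specific stacking constructions that enforce a structural obstruction. To verify a gap in $VSS(P_0)$, I would use the antichain framework of Theorem \ref{Thm:antichain}: for each normal direction $u$, enumerate all maximal antichains in the poset on non-$u$-orthogonal vertices and edges; the antichain sizes give the set of generic slice vertex counts in direction $u$. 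Taking the union over all $u$ yields all generic slice counts. Non-generic slices (hyperplanes passing through one or more vertices) must also be ruled out by direct case analysis, using that such a slice has vertex count equal to the number of transversally cut edges plus the number of polytope vertices on the hyperplane.

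For the infinite extension, I would define $P_{k+1}$ from $P_k$ by stacking a small tetrahedron on a triangular face of $P_k$ chosen far from the combinatorial region responsible for the gap. Each such stacking introduces exactly one new vertex of degree $3$ (the apex of the stacked tetrahedron), so that $P_{k+1}$ remains only $3$-connected. A locality argument then shows that the new slices introduced by the stacking have vertex counts in a small range around the face size, so the gap value $m$ of $P_k$ is not filled in $P_{k+1}$. Iterating yields a sequence of combinatorially distinct $3$-polytopes $\{P_k\}_{k \geq 0}$, all satisfying the hypotheses of the theorem.

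The hard part is exhibiting $P_0$ and rigorously proving the gap. A promising strategy combines parity restrictions (e.g.\ all vertex degrees even, forcing generic slice counts to share a parity so that odd values can only arise from non-generic vertex-passing slices) with a specific separating structure (such as a $3$-vertex cut coming from a separating triangle); the challenge lies in simultaneously ensuring both the generic enumeration (via Theorem \ref{Thm:antichain}) and the non-generic vertex-passing analysis exclude the gap value, and then controlling how stacking preserves this exclusion. Once $P_0$ is identified and the gap is verified, the iteration step is technically routine but must be carried out carefully to maintain combinatorial distinctness across the family.
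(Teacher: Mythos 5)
Your proposal leaves unresolved exactly the step that carries all the content of the theorem: you never exhibit the base polytope $P_0$, and you say yourself that ``the hard part is exhibiting $P_0$ and rigorously proving the gap.'' Without a concrete polytope and a verified missing value, nothing else in the argument can get off the ground. The paper's proof supplies this via the \emph{all-facets stacked tetrahedron} $\sigma(T)$: stack a pyramid on every facet of the tetrahedron, and then a short case analysis (Proposition \ref{Proposition:BeforeGapsR3}) shows that any hyperplane meeting the interior of $T$ produces a slice of $\sigma(T)$ with five or six vertices, while any hyperplane avoiding the interior of $T$ produces one, two, or three vertices --- so $4$ is a gap. Your suggested search heuristics (parity of degrees, separating triangles, antichain enumeration per direction) are plausible directions but none of them is carried out, so the base case is missing.

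Your iteration step is also not sound as stated. Stacking a single tetrahedron on one face ``far from the combinatorial region responsible for the gap'' and invoking a locality argument does not work, because slices are global: a hyperplane can simultaneously cut $j \in \{1,2,3\}$ of the three new apex edges and cut the rest of the polytope in a slice with $m-j$ vertices, thereby realizing the gap value $m$ in $P_{k+1}$ even though $m$ was missing in $P_k$. There is no ``far away'' for a hyperplane. The paper avoids this by stacking on \emph{every} facet and proving Lemma \ref{lemma:sigma_sigma}: if $d+1$ is a gap of a $d$-polytope $P$, it remains a gap of $\sigma(P)$, because any hyperplane meeting the interior of $P$ must cross at least two facets of $P$ and hence picks up at least two new vertices from the stacked pyramids, pushing the count to at least $d+2$ (using $d$-connectivity of the $1$-skeleton), while hyperplanes confined to a single pyramid give at most $d$ vertices or exactly $|F|$ vertices. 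That global structural argument, not a locality claim, is what preserves the gap $4$ under iteration of $\sigma$ applied to $T$, and the degree-$3$ apexes keep every $\sigma^n(T)$ only $3$-connected.
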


Let the hypercube, denoted by $Q_d$, be the convex hull of the $2^d$ distinct points in $\mathbb{R}^d$ where each coordinate is either $0$ or $1$. 
    While the volume of slices of $Q_d$ has received considerable attention (see, e.g., \cite{laplace1835oeuvres,ball2006volumes,GiannopoulosKoldobskyZvavitch2023,pournin-scubesections}), its combinatorics has been much less explored, with only a few papers addressing the topic (\cite{ChakerianLogothetti91,fukuda1997sections,Lawrence79,o1971hyperplane}). Similar work on $VSS(Q_d)$ has been conducted in \cite{fukuda1997sections,Lawrence79} for $d\leq5$. In Section \ref{Section:Hypercube},  we present two results that fully describe the VSS of hypercubes of dimension less than or equal to seven:

\begin{theorem}\label{Thm:FirstGAPS}
Let $d\geq 4$, and $I=\{2^{i}, \text{ for }0\leq i\leq d-1\}\cup\{d, 2d-2, 3d-5, 3d-4\}$. Then, $[4d-10]\setminus I$ is the set of the first gaps of the $d$-dimensional hypercube $Q_d$.
\end{theorem}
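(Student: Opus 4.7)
The argument splits into two complementary parts: (A) for every $m \in I \cap [4d-10]$, exhibit a hyperplane $H$ with $cv_{Q_d}(H) = m$; and (B) for every $m \in [4d-10] \setminus I$, show that no such hyperplane exists.

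For the \textbf{achievability} part (A), each value in $I$ admits an explicit construction. The value $m = 2^i$ with $0 \leq i \leq d-1$ is produced by a small generic perturbation of the affine span of an $i$-face of $Q_d$: the resulting slice is combinatorially an $i$-cube with $2^i$ vertices. The value $m = d$ comes from $H = \{x : x_1 + \cdots + x_d = \varepsilon\}$ for small $\varepsilon > 0$, whose slice is the standard $(d-1)$-simplex. The value $m = 2d-2$ is obtained by strictly separating a single edge of $Q_d$ from the remaining $2^d - 2$ vertices, cutting the $2(d-1)$ orthogonal edges. The value $m = 3d-4$ comes from separating the three-vertex path $\{0, e_1, e_1 + e_2\}$, whose external edge count is $(d-1)+(d-2)+(d-1)$. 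Finally, $m = 3d-5$ is realized by placing the same three-vertex path strictly on one side of $H$ but keeping the vertex $e_2$ (a common neighbor of the two endpoints) \emph{on} $H$, which gives a slice of size $|V^0| + e(V^+,V^-) = 1 + 3(d-2)$. In each case a short linear-algebra check produces an explicit normal $u$ and level $c$ realizing the triple.

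The \textbf{non-achievability} part (B) is the main technical content. By Theorem \ref{Thm:antichain}, for every hyperplane $H \in \mathcal{H}_{Q_d}$ with normal $u$ the slice vertices form a maximal antichain of the associated poset on vertices and edges of $Q_d$, so
\[
cv_{Q_d}(H) \;=\; |V^0| \,+\, e(V^+, V^-),
\]
where $V^0 = V(Q_d) \cap H$ and $V^\pm$ are the vertices strictly on either side of $H$. The task is then, for each $m \in [4d-10] \setminus I$, to rule out every linearly-separable triple $(V^0, V^+, V^-)$ of vertex-subsets of $Q_d$ whose arithmetic yields $m$. By swapping sides we set $k := |V^+| \leq |V^-|$, and the edge-isoperimetric inequality of Harper in $Q_d$ forces $k$ to be small when the slice size is at most $4d-10$. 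For each such $k$ I enumerate the combinatorial shapes of the induced subgraph $Q_d[V^+]$, compute the external edge count $dk - 2|E(Q_d[V^+])|$, and distribute it between $V^0$ and $V^-$; for each candidate one both tallies $|V^0| + e(V^+, V^-)$ against $I$ and verifies that the linear system $\langle u, v \rangle = c$ for $v \in V^0$ is compatible with the strict inequalities $\langle u, v \rangle > c$ for $v \in V^+$ and $\langle u, v \rangle < c$ for $v \in V^-$.

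The \textbf{main obstacle} is the completeness of this case analysis when $V^0$ is nonempty: many arithmetically plausible configurations are infeasible as actual hyperplane slices (small direct computations at $d = 4,5$ already exhibit collapsed systems where an intended vertex of $V^-$ lands on $H$ or on the wrong side), so the verification must simultaneously execute an arithmetic check against the forbidden set and an affine-geometric realizability check. The threshold $4d-10$ is precisely the range within which this exhaustive enumeration stays manageable, and the four special values $d, 2d-2, 3d-5, 3d-4$ are exactly the non-power-of-two slice sizes that survive the screening below it.
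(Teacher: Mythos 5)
Your part (A) reproduces the constructions behind the paper's Lemma \ref{Lemma:HypercubeFirstCuts} (separate $0$, $1$, $2$, or $3$ vertices of $Q_d$, with $3d-5$ arising when the second common neighbour of the path's endpoints lands on $H$), so the achievability half is essentially right --- modulo the slip that $2^i$ is realized by a supporting hyperplane whose intersection with $Q_d$ \emph{is} an $i$-face, not by a perturbation of it (a generic inward perturbation instead cuts the $2^i(d-i)$ edges leaving that face).

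The genuine gap is in part (B). You correctly reduce to $cv_{Q_d}(H)=|V^0|+e(V^+,V^-)$, but the two steps that carry the load are asserted rather than proved. First, Harper's inequality bounds the full edge boundary $\partial_e(V^+)=dk-2|E(Q_d[V^+])|$, whereas the slice size equals $\partial_e(V^+)-\sum_{w\in V^0}\bigl(\deg_{V^+}(w)-1\bigr)$; when $V^0\neq\emptyset$ the correction term can be positive, so ``both sides have at least $4$ vertices'' plus Harper does not by itself exclude a slice size of $4d-10$ or less --- and that is precisely the hard case. The paper closes it by a different mechanism (Lemma \ref{Lemma:AtLeast4d-9}): using antipodality (Lemma \ref{Lemma:NoAntipodal}) and the O'Neil poset it finds four pairwise disjoint sub-hypercubes, each containing an antipodal pair separated by $H$, and Menger's theorem then yields at least $4d-9$ vertex-disjoint crossing paths, each contributing a distinct slice vertex whether it crosses through a cut edge or through a vertex of $V^0$. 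You would need this, or an equivalent device, for your bound to survive $V^0\neq\emptyset$. Second, the enumeration of ``combinatorial shapes of $Q_d[V^+]$'' together with the realizability screening is the actual content of the theorem, and you explicitly flag its completeness as ``the main obstacle'' without carrying it out: you never establish that the smaller side induces a \emph{connected} subgraph (the paper's Proposition \ref{Proposition:Neighbors}, which is what restricts the $k\leq 4$ shapes to a single vertex, an edge, a path, a star, or a $4$-cycle), never derive the values $4d-6,\dots,4d-9$ for $k=4$ showing that case already clears the threshold $4d-10$, and never verify that the surviving values for $k\leq 3$ are exactly $d$, $2d-2$, $3d-5$, $3d-4$. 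As written, part (B) is a plan whose critical steps remain open.
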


\begin{theorem} \label{Thm:gap_penultimo}
    For every even dimension $d>4$,  the number $(d/2)\binom{d}{d/2}-1$ is a gap in the vertex sequence of slices of the $d$-dimensional hypercube $Q_d$.
\end{theorem}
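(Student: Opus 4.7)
Let $N := (d/2)\binom{d}{d/2}$, which is $\nu(Q_d)$ for even $d$. The identity $\binom{2m}{m} = 2\binom{2m-1}{m-1}$ shows that $N$ is even, so $N-1$ is odd; my entire plan is to exploit this parity mismatch. For a hyperplane $H = \{x : \ell(x) = b\}$ with $\ell(x) = \sum a_i x_i$, write $R = V(Q_d)\cap H$ and let $S,T$ be the two open-halfspace vertex sets, so that the slice count is $|R| + E(S,T)$, where $E(S,T)$ counts the edges of $Q_d$ with one endpoint in $S$ and the other in $T$.

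I would first dispose of the case $R = \emptyset$. The degree identity $d|S| = 2E(S,S) + E(S,T)$ forces $E(S,T)$ to be even whenever $d$ is even, so no such slice equals the odd value $N-1$.

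Next I would handle $R \neq \emptyset$. After applying the $Q_d$-automorphisms $x_i \mapsto 1-x_i$ I may assume $a_i \ge 0$. Consider first the \emph{generic} subcase where every $a_i > 0$; then adjacent vertices of $Q_d$ have different $\ell$-values, so $R$ is an independent set and every neighbour of a vertex of $R$ lies in $S$ or $T$. For $\epsilon > 0$ smaller than the smallest gap between distinct vertex $\ell$-values, the perturbed hyperplanes $H^\pm := \{\ell = b \mp \epsilon\}$ miss every vertex of $Q_d$, and the new cut edges over those of $H$ are precisely the edges from $R$ to the old $T$ (resp.\ $S$). This yields
\[
|V(H^\mp \cap Q_d)| = E(S,T) + \sum_{v \in R} w^\mp(v),
\]
with $w^-(v) = |v|$ the Hamming weight and $w^+(v) = d - |v|$. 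Both quantities are bounded by $\nu(Q_d) = N$, and summing gives $E(S,T) \le N - \tfrac{d}{2}|R|$, so
\[
|R| + E(S,T) \;\le\; N - \tfrac{d-2}{2}\,|R| \;\le\; N - 2
\]
whenever $d \ge 6$ and $|R| \ge 1$; this rules out $N-1$ in the generic subcase.

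Finally I would treat the \emph{non-generic} subcase, in which some coefficients vanish; let $J = \{j : a_j = 0\}$. Then $H$ is constant in the $J$-directions and $H \cap Q_d$ factorizes as $(H' \cap Q_{[d]\setminus J}) \times Q_J$ where $H'$ is the induced hyperplane in the lower-dimensional cube, so the slice has $2^{|J|}\cdot|V(H'\cap Q_{[d]\setminus J})|$ vertices, a multiple of $2$ that cannot equal the odd value $N-1$. The main technical obstacle I anticipate is the careful verification of the two perturbation identities above: the claim that the new cut edges of $H^{\mp}$ are exactly the edges from $R$ to $T$ (resp.\ $S$) relies on the independence of $R$ in $Q_d$ and on no vertex $\ell$-value landing between $b$ and $b\mp\epsilon$, both of which in turn depend on the reduction to all $a_i > 0$.
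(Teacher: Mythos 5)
Your proof is correct, but it takes a genuinely different route from the paper's in the main case. The two easy cases coincide in substance: when $H$ contains no vertex of $Q_d$ you get evenness of the cut from the handshake identity $d|S|=2E(S,S)+E(S,T)$, where the paper (Proposition \ref{prop:even_cut}) uses Eulerian circuits; and when the normal has zero entries your product factorization is exactly the paper's Proposition \ref{Pro:ZeroEntries}. The divergence is in the case $R=H\cap V(Q_d)\neq\emptyset$ with a generic normal. The paper perturbs $H$ to one side, uses Lemma \ref{lemma 1} to conclude that the perturbed slice must attain the maximum $N=\nu(Q_d)$ and hence realizes a maximum antichain, then invokes the structural Lemma \ref{lemma:central_edges} (maximum antichains consist only of central-level edges) together with a degree count at central vertices to argue that the count must drop by at least two when the perturbation is undone. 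You instead perturb to \emph{both} sides and average: each $v\in R$ has all $d$ of its neighbours in $S\cup T$ (since $R$ is independent for a generic normal), so the two perturbed counts sum to $2E(S,T)+d|R|\le 2N$, giving $|R|+E(S,T)\le N-\tfrac{d-2}{2}|R|\le N-2$ for $d\ge 6$. This bypasses the classification of maximum antichains entirely and yields the stronger quantitative fact that any slice of $Q_d$ passing through a vertex of $Q_d$ has at most $N-\tfrac{d-2}{2}$ vertices; like the paper, though, you still rely on the external input $\nu(Q_d)=\tfrac{d}{2}\binom{d}{d/2}$ coming from O'Neil's theorem combined with Lemma \ref{lemma 1}. (There is a harmless bookkeeping slip in which of $H^{\pm}$ acquires the $R$-to-$S$ versus the $R$-to-$T$ edges, but only the sum of the two perturbed counts enters the argument, so nothing is affected.)
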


Finally, using additional computational effort, we provide the following table summarizing the VSS for hypercubes up to dimension seven.

\begin{corollary}\label{Corollary:TableUpto7}
        The sequences of the number of vertices on slices of the hypercubes of dimension up to 7 is summarized in the following table.
\end{corollary}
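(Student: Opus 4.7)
The proof of this corollary is essentially computational, but the plan is to combine the structural tools developed earlier in the paper with targeted enumeration so that the table is determined rigorously rather than by a brute-force search over continuous direction space. My plan is to split the verification by dimension. For $d\le 3$ the VSS is well known and can be recovered directly from the definitions: a segment, a square, and a cube yield $VSS(Q_1)=\{1\}$, $VSS(Q_2)=\{2\}$, and $VSS(Q_3)=\{3,4,5,6\}$. For $d=4$ and $d=5$ the VSS is already completely tabulated in \cite{fukuda1997sections,Lawrence79}, so those rows of the table can simply be cited. The genuinely new content lies in the rows $d=6$ and $d=7$.

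For the new rows, I would apply the poset framework of Theorem \ref{Thm:antichain}. The set of directions $u\in\mathbb{S}^{d-1}$ breaks into finitely many equivalence classes (the chambers of the arrangement of hyperplanes orthogonal to the edges of $Q_d$ together with the hyperplanes separating each pair of vertices), and each class determines a single poset on vertices and edges of $Q_d$. For every class, Theorem \ref{Thm:antichain} says the vertex sets of slices with normal direction $u$ are exactly the maximal antichains of this poset. Enumerating these antichains and reading off their sizes yields all numbers in $VSS(Q_d)$ arising from that direction class; taking the union across classes gives $VSS(Q_d)$ exactly.

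To keep the enumeration feasible in dimension $7$, I would quotient by the hyperoctahedral group $B_d$ of symmetries of $Q_d$, since $VSS(Q_d)$ is invariant under this action. This reduces the direction classes to orbits and dramatically cuts the antichain enumeration. Two sanity checks come from the theorems already established in the paper: Theorem \ref{Thm:FirstGAPS} forces the set of gaps in $[4d-10]$ to match $[4d-10]\setminus I$ for $d=6,7$, and Theorem \ref{Thm:gap_penultimo} forces $(d/2)\binom{d}{d/2}-1$ to be a gap when $d=6$. Any computed table must exhibit these gaps and no others in the corresponding ranges; this mutual consistency hardens the computational result.

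The main obstacle is scale. The number of direction classes of $Q_7$ is very large, and naive maximal-antichain enumeration in each poset is exponential in the number of edges that cross the slice. The hard part will be carrying out the $d=7$ computation within reasonable resources: the key is exploiting $B_7$-symmetry so that only one representative per orbit of chamber is processed, pruning antichains using the known maximum $\nu(Q_d)$ (which bounds antichain sizes), and checkpointing partial results against Theorems \ref{Thm:FirstGAPS} and \ref{Thm:gap_penultimo} as they are produced. Once the enumeration terminates, the VSS entries for $d=6,7$ can simply be tabulated and the corollary is established.
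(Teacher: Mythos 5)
There is a genuine gap in your plan for the rows $d=6,7$, and it sits at the point where you write that Theorem \ref{Thm:antichain} says the slices with normal $u$ are ``exactly'' the maximal antichains of $(P_u,\leq)$. The theorem is a one-way implication: every slice yields a maximal antichain, but not every maximal antichain is realized by a hyperplane. The antichains realized by hyperplanes are only the level sets $S_t=\{(v_i,v_j): u\cdot v_i\leq t\leq u\cdot v_j\}$ as $t$ sweeps through $\mathbb{R}$, and a poset $(P_u,\leq)$ generally has many maximal antichains that are not level sets. For a concrete failure already in $Q_3$ with $u=(1,2,4)$: the edges $\{000,100\}$ (spanning heights $[0,1]$) and $\{011,111\}$ (spanning $[6,7]$) are incomparable in $(P_u,\leq)$ because there is no $u$-increasing path from $100$ to $011$, so they extend to a maximal antichain; yet no hyperplane $u\cdot X=t$ can meet both, since that would require $t\leq 1$ and $t\geq 6$. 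Enumerating all maximal antichains therefore produces a superset of the realizable slice supports, and their sizes could inject spurious values into $VSS(Q_6)$ and $VSS(Q_7)$. To make your enumeration sound you would have to restrict to level antichains, i.e.\ perform the sweep over $t$ for each direction class --- at which point you are back to directly computing slices.

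This also changes the logical role of Theorems \ref{Thm:FirstGAPS} and \ref{Thm:gap_penultimo}, which you demote to ``sanity checks.'' In the paper's proof they are load-bearing: the computation only produces slices along \emph{selected} directions, so it can certify that a value is attained but can never certify that a value is a gap. The gaps $\{3,5,7,9,11,12,59\}$ for $d=6$ and $\{3,5,6,9,10,11,13,14,15,18\}$ for $d=7$ are established purely by those two theorems (together with $\nu(Q_6)=60$, $\nu(Q_7)=140$ from O'Neil), and the computation supplies witnesses for every remaining value in $[\nu(Q_d)]$. Your inverted division of labor --- exhaustive enumeration for completeness, theorems as cross-checks --- would only work if the exhaustive enumeration were both sound (it is not, per the above) and feasible over all chambers of the vertex-difference arrangement in $\mathbb{R}^7$, which you acknowledge but do not establish.
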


{\small 
\begin{table}[ht]
    \centering
    \begin{tabular}{| c|c |}
     \hline
     \textbf{Dimension} & \textbf{VSS}\\
      \hline 
        2 & $[2]$ \\
         \hline
        3 & $[6]$\\
         \hline
        4 &  $[12]\setminus\{3,5\}$\\
          \hline
        5 & $[30] \setminus \{3, 6, 7, 9\}$\\
           \hline
        6  &$[60] \setminus \{3, 5, 7, 9, 11, 12, 59\}$\\
            \hline
        7  &$[140] \setminus \{3, 5, 6, 9, 10, 11, 13, 14, 15, 18\}$\\
            \hline
    \end{tabular}

\caption{\centering Sequences of vertices from slices (VSS) of hypercubes up to dimension seven. }
    \label{tab:VSS}
\end{table}
}

%\section{Structure of Maximal Vertex Hyperplane Slices of Polytopes}\label{Section:MaximalStructure}

\section{Maximum number of vertices in Slices and the Cyclic Polytope}\label{Sec:Cylic}

% The \emph{cyclic polytope}, denoted by $C_d(n)$, is a convex polytope obtained as the convex hull of $n$ points $\gamma(t_1), \gamma(t_2), \dots, \gamma(t_n)$ on the moment curve $\gamma(t) = (t, t^2, t^3, \dots, t^d) \in \mathbb{R}^d$, where $t_1 < t_2 < \dots < t_n$ are distinct real numbers. In this section, we characterize the vertex sequences of the slices of cyclic polytopes. The following lemma plays a crucial role in identifying slices with the maximum number of vertices, as it limits the search to those slices that intersect only the edges of the polytope.

The \emph{cyclic polytope}, denoted by $C_d(n)$, is a convex polytope obtained as the convex hull of $n$ points $\gamma(t_1), \gamma(t_2), \dots, \gamma(t_n)$ on the moment curve $\gamma(t) = (t, t^2, t^3, \dots, t^d) \in \mathbb{R}^d$, where $t_1 < t_2 < \dots < t_n$ are distinct real numbers (See, e.g., \cite{grunbaum2003convex,ziegler2012lectures} for a thorough introduction to cyclic polytopes). In this section, we characterize the vertex sequences of 
the slices of cyclic polytopes. The following lemma plays a crucial role in identifying slices with the maximum number of vertices, as it limits the 
search to those slices that intersect only the edges of the polytope.

\begin{lemma}\label{lemma 1}
Let $P$ be a $d$-polytope, and let $H \in \mathcal{H}_P$ be a hyperplane that intersects $P$ in at least one of its vertices. Then there exists a hyperplane $H' \in \mathcal{H}_P$ that intersects $P$ in none of its vertices and satisfies $cv_P(H') \geq cv_P(H)$.
\end{lemma}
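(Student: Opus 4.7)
The plan is to translate $H$ slightly along its normal direction toward a side of $H$ that contains vertices of $P$, obtaining a hyperplane $H'$ that avoids all vertices of $P$ and whose slice has at least as many vertices as $H\cap P$. Write $H = \{x \in \mathbb{R}^d : u \cdot x = c\}$ for a unit normal $u$ and partition $V(P) = V_0 \sqcup V_+ \sqcup V_-$ according to the sign of $u \cdot v - c$. Because $P$ is $d$-dimensional while $H$ is a $(d-1)$-flat, $V(P)$ cannot lie in $H$, so at least one of $V_+, V_-$ is nonempty; without loss of generality assume $V_- \neq \emptyset$. For any sufficiently small $\epsilon > 0$ (smaller than $\min\{|u \cdot v - c| : v \in V_+ \cup V_-\}$), the hyperplane $H' = \{x : u \cdot x = c - \epsilon\}$ avoids every vertex of $P$ and still meets $P$, so $H' \in \mathcal{H}_P$.

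Next I would count $cv_P(H)$ and $cv_P(H')$ using the description (i)--(ii) of slice vertices. Let $C$ denote the number of edges of $P$ joining $V_-$ to $V_+$. Then $cv_P(H) = |V_0| + C$: the type-(i) contributions are exactly the vertices in $V_0$, while the type-(ii) contributions are interior intersections of $H$ with edges, which come only from $V_-V_+$ edges (edges incident to $V_0$ meet $H$ at a $V_0$ endpoint, not in their interior). For $H'$, only type-(ii) vertices occur, arising from edges with one endpoint in $V_0 \cup V_+$ (above $H'$) and the other in $V_-$ (below $H'$); writing $B$ for the number of $V_0V_-$ edges, this gives $cv_P(H') = B + C$. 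It therefore suffices to show $B \geq |V_0|$.

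The crux is to prove that every $v \in V_0$ has at least one neighbor in $V_-$. Suppose for contradiction that some $v \in V_0$ has all its graph-neighbors in $V_0 \cup V_+$; then every neighbor $w$ satisfies $u \cdot (w - v) \geq 0$. The tangent cone of $P$ at $v$ equals $v + \mathrm{cone}\{w - v : w \text{ a neighbor of } v\}$, so it lies in the halfspace $\{x : u \cdot x \geq c\}$. Since $P$ is contained in its tangent cone at any vertex, $P \subseteq \{x : u \cdot x \geq c\}$, contradicting $V_- \neq \emptyset$. Hence every $v \in V_0$ contributes at least one $V_0V_-$ edge, so $B \geq |V_0|$ and $cv_P(H') \geq cv_P(H)$.

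The step I expect to require the most care is the identification of the tangent cone at a vertex with the conic hull of its incident edge directions, together with the containment $P \subseteq T_v P$. This is a standard fact for convex polytopes, but it is precisely the convex-geometric input that converts the purely combinatorial hypothesis ``all graph-neighbors of $v$ lie on one side of $H$'' into the rigid geometric conclusion that the entire polytope $P$ lies in a halfspace bounded by $H$.
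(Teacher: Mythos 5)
Your proposal is correct and follows essentially the same route as the paper: translate $H$ parallel to itself toward a side containing vertices, note that the previously crossed edges remain crossed, and show that each vertex of $P$ on $H$ is replaced by at least one newly crossed edge because it must have a neighbor strictly on the chosen side. Your tangent-cone justification of that neighbor's existence is exactly the argument the paper supplies later in its Proposition on neighbors, and your explicit count $cv_P(H)=|V_0|+C$ versus $cv_P(H')=B+C$ is just a sharper bookkeeping of the paper's ``at least as many'' comparison.
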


\begin{proof}
The hyperplane divides $\mathbb{R}^d$ into two open half-spaces, $H^+$ and $H^-$. If all the vertices of $P$ are contained in the closure of one of these half-spaces, say $H^+$, then $H$ is tangent to $P$, and all the vertices of $P$ contained in $H$ must have neighbors in $H^+$. Otherwise, all the vertices of $P$ contained in $H$ have neighbors in both half-spaces.

Consider the hyperplane $H' \subset H^+$ obtained by moving $H$ continuously in the direction of its normal vector just until $H'$ no longer contains any vertices of $P$, but still intersects $P$. We claim that the number of vertices in $H' \cap P$ is at least as large as in $H \cap P$.

Observe that $H'$ intersects the relative interiors of the edges of $P$ that are also intersected by $H$. Additionally, for any vertex $v \in V(P) \cap H$, there is a neighbor $a(v)$ contained in $H^+$. As $H$ moves to $H'$, the hyperplane intersects the edge connecting $v$ and $a(v)$ in its interior, ensuring a new point of intersection within $P$.

Thus, $cv_P(H') \geq cv_P(H)$, completing the proof.
\end{proof}

One of the most fundamental structural results in the theory of cyclic polytopes is Gale's Evenness Condition \cite{gale1963neighborly}, which completely characterizes their facet structure. The theorem states the following: 

\begin{theorem}[Gale's Evenness Condition]
Consider the cyclic polytope induced by the convex hull of the image of the real parameters $t_1 < t_2 < \dots < t_n$ under the map $\gamma$. A subset $S \subseteq \{t_1, \ldots, t_n\}$ of size $d$ forms a facet of $C_d(n)$ if and only if, for any $t_i < t_j$ not in $S$, the number of elements $t_k \in S$ such that $t_i < t_k < t_j$ is even.
\end{theorem}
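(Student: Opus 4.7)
The plan is to use the classical determinantal sidedness criterion. Because any $d$ points on the moment curve are affinely independent (a standard Vandermonde argument), the points $\gamma(t_{i_1}), \ldots, \gamma(t_{i_d})$ indexed by $S=\{t_{i_1}<\cdots<t_{i_d}\}$ span a unique affine hyperplane $H_S$. Then $S$ is the vertex set of a facet of $C_d(n)$ if and only if every other vertex $\gamma(t)$, with $t \in \{t_1,\ldots,t_n\}\setminus S$, lies on the same side of $H_S$, so everything reduces to a sign computation.

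To compute that sign, I would use the fact that the signed distance from $\gamma(t)$ to $H_S$ has the same sign as the $(d+1)\times(d+1)$ determinant
\[
D(t)=\det\begin{pmatrix} 1 & t & t^2 & \cdots & t^d \\ 1 & t_{i_1} & t_{i_1}^2 & \cdots & t_{i_1}^d \\ \vdots & \vdots & \vdots & & \vdots \\ 1 & t_{i_d} & t_{i_d}^2 & \cdots & t_{i_d}^d \end{pmatrix}.
\]
Expanding this as a Vandermonde determinant yields $D(t)=C\cdot\prod_{k=1}^{d}(t-t_{i_k})$, with $C=\prod_{1\le k<l\le d}(t_{i_l}-t_{i_k})>0$ because the $t_{i_k}$ are increasing. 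Hence, for $t\notin S$, $\operatorname{sign}(D(t))=(-1)^{m(t)}$, where $m(t)$ is the number of elements of $S$ strictly greater than $t$.

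Finally, $S$ is a facet iff the parity of $m(t)$ is constant on $\{t_1,\ldots,t_n\}\setminus S$. For any two non-facet parameters $t<t'$, the difference $m(t)-m(t')$ equals the number of elements of $S$ lying in the open interval $(t,t')$ (using $t'\notin S$), so constant parity is exactly the statement that this count is even for every such pair, which is Gale's condition. The only nontrivial step is the Vandermonde factorization of $D(t)$; the remainder is straightforward parity bookkeeping, and one may reduce arbitrary non-facet pairs to consecutive ones by additivity of counts over nested intervals.
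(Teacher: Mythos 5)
The paper does not prove this statement at all: Gale's Evenness Condition is quoted as a classical result with a citation to Gale's 1963 paper, and the authors only prove consequences of it (Corollary~2.2). So there is no in-paper proof to compare against; what you have written is the standard textbook argument (essentially the one in Ziegler's \emph{Lectures on Polytopes}), and it is correct. The reduction of ``$S$ is a facet'' to ``all $\gamma(t)$, $t\notin S$, lie strictly on one side of $H_S$'' is legitimate because the Vandermonde factorization also shows $D(t)\neq 0$ for $t\notin S$, so no outside vertex can lie on $H_S$; and the passage from constancy of the parity of $m(t)$ to the evenness condition on open intervals $(t_i,t_j)$ is exactly the identity $m(t_i)-m(t_j)=\#\bigl(S\cap(t_i,t_j)\bigr)$ when $t_j\notin S$. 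One small slip: with your row ordering the Vandermonde expansion gives $D(t)=C\cdot\prod_{k=1}^{d}(t_{i_k}-t)=(-1)^{d}C\cdot\prod_{k=1}^{d}(t-t_{i_k})$, so your leading constant should be $(-1)^{d}C$ rather than $C$; since this is a nonzero constant independent of $t$, it does not affect the ``same sign for all $t\notin S$'' criterion, and the parity bookkeeping goes through unchanged.
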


\begin{corollary}\label{linea dual}
Let $T = \{t_1 < t_2 < \dots < t_n\} \subset \mathbb{R}$ and $P = \{p_1 < p_2 < \dots < p_d\} \subset \mathbb{R} \setminus T$. We define the open intervals:
\[
I_1 = (-\infty, p_1),\quad I_2 = (p_1, p_2),\quad \dots,\quad I_d = (p_{d-1}, p_d),\quad I_{d+1} = (p_d, \infty).
\]

Let $H = \mathrm{aff}(\gamma(p_1), \dots, \gamma(p_d))$. Then, for every $t \in T$, if $t \in I_i$ with $i$ odd, we have $\gamma(t) \in H^+$; whereas if $i$ is even, then $\gamma(t) \in H^-$.
\end{corollary}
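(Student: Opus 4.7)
My plan is to encode the hyperplane $H$ as the zero set of an affine functional on $\mathbb{R}^d$ and then examine how this functional, pulled back along the moment curve, behaves as a polynomial in $t$. Concretely, I would write $H = \{x \in \mathbb{R}^d : \ell(x) = 0\}$ for some nonzero affine functional $\ell(x) = a_1 x_1 + \cdots + a_d x_d + b$, and set
\[
g(t) \;:=\; \ell(\gamma(t)) \;=\; a_d t^d + a_{d-1} t^{d-1} + \cdots + a_1 t + b.
\]
This is a polynomial in $t$ of degree at most $d$ that vanishes at the $d$ distinct points $p_1, \ldots, p_d$ since $\gamma(p_j) \in H$ for each $j$.

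I would first argue that $g$ has degree exactly $d$, so that the $p_j$ account for all of its roots. Otherwise $g$ would vanish at $d$ points while having degree less than $d$, forcing $g \equiv 0$. This would mean $\ell$ vanishes on the entire moment curve, which by the linear independence of the monomials $1, t, t^2, \ldots, t^d$ forces every coefficient of $\ell$ to vanish, contradicting that $\ell$ defines a hyperplane. Therefore $g(t) = c \prod_{j=1}^d (t - p_j)$ for some nonzero constant $c$.

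With the factorization in hand, the sign analysis is immediate. For $t \in I_i$, exactly $i-1$ of the factors $(t - p_j)$ are positive and $d - i + 1$ are negative, so
\[
\operatorname{sign}(g(t)) \;=\; \operatorname{sign}(c)\cdot (-1)^{d-i+1}.
\]
Hence $\operatorname{sign}(g(t))$ is constant on each $I_i$ and flips whenever $i$ advances by one. Labeling the half-space $H^+$ to be the one on which $g$ carries the sign associated to $i = 1$ (i.e., $\operatorname{sign}(c)\cdot (-1)^d$) and $H^-$ the other, we conclude that $\gamma(t) \in H^+$ exactly when $i$ is odd and $\gamma(t) \in H^-$ exactly when $i$ is even, as required.

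The only subtle point in this plan is the degree argument; everything else is parity bookkeeping. I do not anticipate any serious obstacle beyond being explicit about the labeling convention for $H^+$ versus $H^-$, which is intrinsically free since the two open half-spaces determined by $H$ can be swapped by negating $\ell$.
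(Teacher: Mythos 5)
Your proof is correct, but it takes a genuinely different route from the paper. You argue directly: writing $H=\{x:\ell(x)=0\}$, pulling $\ell$ back along the moment curve to the degree-$d$ polynomial $g(t)=\ell(\gamma(t))$, observing that its $d$ roots are exactly $p_1,\dots,p_d$ (the degree argument via linear independence of $1,t,\dots,t^d$ is the right way to rule out $g\equiv 0$), and reading off the sign alternation from the factorization $g(t)=c\prod_j(t-p_j)$. The paper instead treats the statement literally as a corollary of Gale's Evenness Condition: it forms the auxiliary cyclic polytope $\mathrm{conv}(\gamma(T\cup P))$, splits into the cases where $\mathrm{conv}(\gamma(P))$ is or is not a facet, and in the latter case restricts to the points lying in the odd-indexed intervals so that $\mathrm{conv}(\gamma(P))$ becomes a facet of a smaller cyclic polytope, from which the half-space assignment follows. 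Your argument is more elementary and self-contained -- it is in fact the standard computation underlying the proof of Gale's condition itself -- and it makes the sign alternation and the freedom in labeling $H^+$ versus $H^-$ completely explicit, whereas the paper's version keeps the statement tied to the facet combinatorics it has just introduced. The one point worth stating explicitly in your write-up is that the $d$ points $\gamma(p_1),\dots,\gamma(p_d)$ are affinely independent, so that $\mathrm{aff}(\gamma(p_1),\dots,\gamma(p_d))$ really is a hyperplane; this follows from the same Vandermonde-type independence you already invoke, so it costs you one sentence.
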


\begin{proof}
Consider the set of points $T \cup P \subset \mathbb{R}$ and the cyclic polytope $\mathrm{conv}(\gamma(T \cup P))$. If $\mathrm{conv}(\gamma(P))$ is a facet of this polytope, then all points of $\gamma(T)$ must lie on one side of the hyperplane $H = \mathrm{aff}(\gamma(P))$. Moreover, by Gale's evenness condition, it follows that either no interval with odd index contains points of $T$, or no interval with even index does.

On the other hand, if $\mathrm{conv}(\gamma(P))$ is not a facet of $\mathrm{conv}(\gamma(T \cup P))$, then there are points of $\gamma(T)$ on both sides of the hyperplane $H$. Consider the set $\mathrm{conv}(\gamma((T \cup P) \cap (\bigcup_{i\ \text{odd}} I_i)))$. By Gale’s evenness condition, $\mathrm{conv}(\gamma(P))$ is one of its facets, which implies that the points $\gamma(T \cap (\bigcup_{i\ \text{odd}} I_i))$ lie entirely in one half-space, say $H^+$. Similarly, the points $\gamma(T \cap (\bigcup_{i\ \text{even}} I_i))$ must lie in the other half-space, that is, $H^-$.
\end{proof}

\begin{theorem}\label{cyclic polytope slices}
The vertex sequences of the slices of the cyclic polytope $C_d(n)$ for $d \geq 3$ are the following:
\[
\begin{cases} 
    [2(n-2)] & \text{if } d = 3, \\
    \left\{a \cdot b + i \mid a, b \in \{0, 1, \dots, n-1\}, \, i \in \{0, 1, \dots, d\}, \, a + b = n - i \right\} & \text{if } d > 3.
\end{cases}
\]   
\end{theorem}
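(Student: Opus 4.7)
My plan is to prove the two cases $d > 3$ and $d = 3$ separately, establishing both set inclusions in each.

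For $d > 3$, the argument uses $\lfloor d/2 \rfloor$-neighborliness of $C_d(n)$ (so every pair of vertices is an edge, since $\lfloor d/2 \rfloor \geq 2$), general position of the vertices on the moment curve (so any hyperplane contains at most $d$ of them), and the polynomial description of hyperplanes underlying Corollary \ref{linea dual}: a hyperplane $H$ is encoded by a polynomial $q$ of degree at most $d$ with $\gamma(t) \in H \iff q(t) = 0$ and sides determined by the sign of $q(t)$. Given $H \in \mathcal{H}_P$, letting $i$ count the vertices of $P$ on $H$ and $a, b$ the vertices on each open side, we have $a + b = n - i$, $i \leq d$, and $a, b \leq n - 1$ (else $H \cap P = \emptyset$); neighborliness then forces $cv_P(H) = i + ab$. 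Conversely, to realize any valid $(i, a, b)$, I would construct $q$ directly. For $i < d$, take $T_0 = \{t_{a+1}, \ldots, t_{a+i}\}$ and set $q(x) = s(x) \prod_{t \in T_0}(x - t)$ with $s(x) = -1$ when $i$ is odd and $s(x) = y - x$ for some $y \in (t_a, t_{a+i+1})$ when $i$ is even (with obvious modifications when $a = 0$ or $a + i = n$). For $i = d$, take $T_0 = \{t_{a+1}\} \cup \{t_{n-d+2}, \ldots, t_n\}$ and $q(x) = (-1)^d \prod_{t \in T_0}(x - t)$. In each case a direct sign computation, using that $\prod_{t \in T_0}(t_\ell - t)$ has sign $(-1)^{|T_0| - \alpha_\ell}$ where $\alpha_\ell$ is the number of $T_0$-elements below $t_\ell$, verifies that $q$ is positive at exactly $a$ points of $T \setminus T_0$ and negative at the other $b$.

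For $d = 3$, the graph of $C_3(n)$ has $t_1$ and $t_n$ as universal vertices together with the path $t_2 - t_3 - \cdots - t_{n-1}$ among the remaining vertices. The upper bound $\nu(C_3(n)) \leq 2n - 4$ follows from a case split on whether $t_1, t_n$ lie on the same side or on opposite sides of $H$; the universal-edge contribution (at most $n - 1$ in the opposite-side case, at most $2|B|$ in the same-side case) plus at most $n - 3$ path transitions yields the bound, with equality attained by the hyperplane separating $\{t_1, t_n\}$ from $\{t_2, \ldots, t_{n-1}\}$. For realizability of every $m \in [1, 2n - 4]$, the values $m = 1, 2$ come from hyperplanes tangent at a vertex or along an edge. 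For $3 \leq m \leq 2n - 4$, I would construct two families via Corollary \ref{linea dual}: the \emph{odd family} $A_k = \{t_1, t_n\} \cup \{t_2, \ldots, t_{k+1}\}$ for $k = 0, \ldots, n - 3$, realized by block sizes $(k_1, k_2, k_3, k_4) = (k+1, n-k-2, 1, 0)$, which yields $cv_P = 2n - 4$ when $k = 0$ and $cv_P = 2n - 2k - 3$ for $k \geq 1$; and the \emph{even family} $A_k = \{t_1, t_n, t_{n-1}\} \cup \{t_2, \ldots, t_{k+1}\}$ for $k = 1, \ldots, n - 4$, with block sizes $(k+1, n-k-3, 2, 0)$, which yields $cv_P = 2n - 2k - 4$. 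Together the two families realize every integer in $[3, 2n - 4]$.

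The most delicate step will be the $d > 3$ realizability with $i = d$: here $q$ has degree exactly $d$, is determined by its $d$ zeros up to a constant, and a consecutive choice of $T_0$ generally fails to split the off-$H$ vertices onto both sides. The non-consecutive pattern $\{t_{a+1}\} \cup \{t_{n-d+2}, \ldots, t_n\}$ is designed so that the quantity $\alpha_{l_k}$ takes only the two values $0$ and $1$, producing exactly $a$ positives and $b$ negatives. The $d = 3$ reverse inclusion is also fiddly, since one must verify both that the odd and even families together cover every integer in $[3, 2n - 4]$ and that each combinatorial partition corresponds to an admissible choice of $p_1 < p_2 < p_3$ in the intervals prescribed by Corollary \ref{linea dual}.
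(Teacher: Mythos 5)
Your plan follows essentially the same route as the paper: Corollary \ref{linea dual} (i.e., the sign pattern of the defining polynomial along the moment curve) to control which side of $H$ each vertex lands on, $2$-neighborliness for $d>3$ so that the crossing edges are exactly the $a\cdot b$ pairs split by $H$, and explicit one-parameter families of planes to realize every value in $[2(n-2)]$ for $d=3$ (your odd/even families are the paper's families $\mathcal{A}$, $\mathcal{B}$, $\mathcal{C}$ reparametrized). Where you go beyond the paper is the realizability step for $d>3$: the paper simply asserts that a suitable affine span of $d$ points on the curve exists, while you give an explicit polynomial $q$ for each admissible triple $(i,a,b)$, including the genuinely delicate $i=d$ case via the non-consecutive root set $\{t_{a+1}\}\cup\{t_{n-d+2},\dots,t_n\}$; your sign computations there check out, and this is a worthwhile strengthening of the paper's terse argument.

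One step of your $d=3$ upper bound does not close as written. In the case where $t_1$ and $t_n$ lie on the same side of $H$, you bound the cut edges by $2|B|$ (universal edges) plus ``at most $n-3$ path transitions''; since $|B|$ can be as large as $n-2$, this gives only $2(n-2)+(n-3)=3n-7$, not $2n-4$. The repair is standard: the number of cut path edges is at most twice the number of blocks of the complement of $B$ in the path $t_2,\dots,t_{n-1}$, hence at most $2(n-2-|B|)$, and then $2|B|+2(n-2-|B|)=2n-4$. Alternatively, the paper's argument (a plane cutting only edges leaves a spanning tree on each side, so at least $n-2$ of the $3n-6$ edges are uncut) avoids the case split entirely and is the cleaner route.
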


\begin{proof}
For the $3$-dimensional cyclic polytope, by \textbf{Lemma \ref{lemma 1}}, to find a plane that generates a slice of $C_3(n)$ with the maximum number of vertices, it is enough to consider planes that intersect only the edges of $C_3(n)$. We know that $C_3(n)$ has $3n - 6$ edges. If a plane $H$ intersects only edges, then it splits the $1$-skeleton into two connected subgraphs, one in $H^+$ and the other in $H^-$. Since a connected graph with $k$ vertices has at least $k - 1$ edges (its spanning tree), these two graphs must have at least $n - 2$ edges lying entirely within $H^+ \cup H^-$. Therefore, at most $(3n - 6) - (n - 2) = 2(n - 2)$ edges cross the plane $H$, which provides an upper bound for $\nu(C_3(n))$. 

We now show that every integer $m \in [2(n - 2)]$ can actually be attained as the number of vertices in a slice.

Slices with one, two, or three vertices correspond to tangent planes intersecting at a vertex, an edge, or a face, respectively. To produce the remaining values, consider the following three families of planes intersecting $C_3(n)$ only through edges:

% In $\mathcal{A}$, between $t_b$ and $t_c$ there can be $0 \leq k \leq n-4$ points from $\{t_3, \ldots, t_{n-2}\}$. In $\mathcal{B}$, between $t_b$ and $t_c$ there can be $0 \leq k \leq n-3$ points from $\{t_3, \ldots, t_{n-1}\}$, while in $\mathcal{C}$ the order of $t_a, t_b,$ and $t_c$  with respect to the $\{t_i\}_{i=1}^n$ is fully determined. Then applying Lemma \ref{linea dual}, we have:

% \begin{itemize}
% \item For $H \in \mathcal{A}$: $\{\gamma(t_1), \gamma(t_2), \gamma(t_{n-1}), \gamma(t_n)\} \subset H^+$ and $|H^- \cap V(C_3(n))| = k$ with $1 \leq k \leq n - 4$.
% \item For $H \in \mathcal{B}$: $\{\gamma(t_1), \gamma(t_2), \gamma(t_n)\} \subset H^+$, $\gamma(t_{n-1}) \in H^-$, and $|H^- \cap V(C_3(n))| = k$ with $1 \leq k \leq n - 3$.
% \item For $H \in \mathcal{C}$: $\{\gamma(t_1), \gamma(t_n)\} \subset H^+$ and $\{\gamma(t_2), \ldots, \gamma(t_{n-1})\} \subset H^-$.
% \end{itemize}

\begin{align*}
\mathcal{A} &= \left\{ H = \mathrm{aff}\big(\gamma(t_a), \gamma(t_b), \gamma(t_c)\big) \;\middle|\; 
                t_a, t_b, t_c \notin \{t_i\}_{i=1}^n,\ 
                t_a < t_1,\ 
                t_2 < t_b < t_c < t_{n-1} \right\}, \\
\mathcal{B} &= \left\{ H = \mathrm{aff}\big(\gamma(t_a), \gamma(t_b), \gamma(t_c)\big) \;\middle|\; 
                t_a, t_b, t_c \notin \{t_i\}_{i=1}^n,\ 
                t_a < t_1,\ 
                t_2 < t_b < t_{n-1} < t_c < t_n \right\}, \\
\mathcal{C} &= \left\{ H = \mathrm{aff}\big(\gamma(t_a), \gamma(t_b), \gamma(t_c)\big) \;\middle|\; 
                t_a, t_b, t_c \notin \{t_i\}_{i=1}^n,\ 
                t_a < t_1 < t_b < t_2,\ 
                t_{n-1} < t_c < t_n \right\}.
\end{align*}

The relative positions of these hyperplanes with respect to the vertex set $V(C_3(n)) = \{\gamma(t_i)\}_{i=1}^n$ are characterized by:

\begin{itemize}
\item \textbf{Family $\mathcal{A}$}: 
  \begin{itemize}
  \item Contains between $t_b$ and $t_c$: $0 \leq k \leq n-4$ points from $\{t_3, \ldots, t_{n-2}\}$,
  \item Satisfies $\{\gamma(t_1), \gamma(t_2), \gamma(t_{n-1}), \gamma(t_n)\} \subset H^+$, and $|H^- \cap V(C_3(n))| = k$ with $1 \leq k \leq n-4$.
  \end{itemize}

\item \textbf{Family $\mathcal{B}$}: 
  \begin{itemize}
  \item Contains between $t_b$ and $t_c$: $0 \leq k \leq n-3$ points from $\{t_3, \ldots, t_{n-1}\}$,
  \item Satisfies $\{\gamma(t_1), \gamma(t_2), \gamma(t_n)\} \subset H^+$ and $\gamma(t_{n-1}) \in H^-$, and $|H^- \cap V(C_3(n))| = k$ with $1 \leq k \leq n-3$.
  \end{itemize}

\item \textbf{Family $\mathcal{C}$}: 
  \begin{itemize}
  \item Has fixed ordering relative to $\{t_i\}_{i=1}^n$,
  \item Satisfies $\{\gamma(t_1), \gamma(t_n)\} \subset H^+$ and $\{\gamma(t_2), \ldots, \gamma(t_{n-1})\} \subset H^-$.
  \end{itemize}
\end{itemize}

These classifications follow directly from applying Corollary \ref{linea dual}.

Then, for any plane $H \in \mathcal{A}$, the $k$ vertices lying in $H^-$ are adjacent to both $\gamma(t_1)$ and $\gamma(t_n)$, and two of these vertices are also adjacent to a vertex in $H^+$ different from $\gamma(t_1)$ and $\gamma(t_n)$. Hence, the number of edges crossing $H$ is $2k + 2$, with $1 \leq k \leq n - 4$.

Similarly, for $H \in \mathcal{B}$, the $k$ vertices in $H^-$ are adjacent to both $\gamma(t_1)$ and $\gamma(t_n)$, and one of them is adjacent to a vertex in $H^+$ different from $\gamma(t_1)$, $\gamma(t_2)$, and $\gamma(t_n)$. Therefore, the number of edges crossing $H$ is $2k + 1$, with $1 \leq k \leq n - 3$.

For $H \in \mathcal{C}$, the number of edges crossing the plane is exactly $2(n - 2)$, coming from all connections between $\gamma(t_1)$, $\gamma(t_n)$, and the remaining vertices.

Thus, for every integer $m \in [1, 2(n - 2)]$, there exists a hyperplane $H$ such that the number of edges in the slice \(H \cap C_3(n)\) is equal to \(m\).\\

Next, for dimension \(d > 3\), consider a hyperplane \(H\) intersecting the cyclic polytope \(C_d(n)\). Note that it always cuts the moment curve into $d$ points. Suppose \(H\) partitions the vertices of \(C_d(n)\) such that:
\begin{center}
   $\vert H^+ \cap V(C_d(n))\vert =a,$ \hskip .3cm $\vert H^- \cap V(C_d(n))\vert =b,$ \hskip .3cm  and \hskip .3cm   $\vert H \cap V(C_d(n))\vert =i$
\end{center}
where \(a, b \in \{0, 1, \dots, n-1\}\) and \(i \in \{0, 1, \dots, d\}\). By Corollary \ref{linea dual}, such hyperplane \(H\) can always be constructed as the affine span of a suitable selection of \(d\) points on the moment curve (not necessarily distinct from \(\gamma(t_1), \gamma(t_2), \ldots, \gamma(t_n)\)). Then, the number of vertices in the section \(H \cap C_d(n)\) is determined by the \(a \cdot b\) edges crossing \(H\), plus the \(i\) vertices lying in \(H\). Consequently, the  VSS is given by the following set of integers:
\[
\left\{a \cdot b + i \mid a, b \in \{0, 1, \dots, n-1\}, \, i \in \{0, 1, \dots, d\}, \, a + b = n - i \right\}. 
\]
\end{proof}

Unlike the three-dimensional case, the VSS of $C_d(n)$ for \(d > 3\) contains gaps. This is because if \(a + b < n - d\), then it follows that \(i > d\), which is not allowed. As a result, some values are not achievable. For example, if \(d = 4\) and \(n = 10\), the VSS of $C_4(10)$ is:
\[
\{1,\ 2,\ 3,\ 4,\ 9,\ 12,\ 13,\ 14,\ 15,\ 16,\ 17,\ 18,\ 19,\ 21,\ 24,\ 25\}.
\]
\vskip .3cm

The Upper Bound Theorem (UBT), a cornerstone of polytope theory, was conjectured by Motzkin in 1957 \cite{motzkin1957comonotone} and proved by McMullen in 1970 \cite{mcmullen1970maximum}. This theorem states that for any convex $d$-polytope $P$ with $n$ vertices, the maximum number of $i$-faces, denoted $f_i$, is achieved by the cyclic polytope $C_d(n)$. Formally, $f_i(P) \leq f_i(C_d(n))$ for all $i$, and any $d$-polytope $P$ with $n$ vertices (see e.g., \cite{ziegler2012lectures}). We are now ready to prove the following result.

\begin{theorem}\label{Thm:UpBound}
   Let $P$ be a $d$-polytope with $n$ vertices in $\mathbb{R}^d$. Then 
    \[
    \nu(P) \ \leq \ \nu(C_d(n)) \ = \ 
    \begin{cases} 
         2(n-2) & \text{if } d = 3, \\
         \lfloor n/2 \rfloor \lceil n/2 \rceil & \text{if } d > 3.
    \end{cases}
    \]
\end{theorem}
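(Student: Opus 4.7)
The plan is to reduce the problem to counting edges of $P$ crossed by a hyperplane and then bound this quantity separately in the two dimension regimes, matching the value already computed for $C_d(n)$ in Theorem \ref{cyclic polytope slices}. First, by Lemma \ref{lemma 1} it suffices to consider hyperplanes $H\in\mathcal{H}_P$ that avoid every vertex of $P$. Under this assumption each vertex of the slice $H\cap P$ is the unique interior crossing point of $H$ with some edge of $P$, so $cv_P(H)$ equals the number of edges of $P$ crossed by $H$. Let $a=|V(P)\cap H^+|$ and $b=|V(P)\cap H^-|$; then $a+b=n$, and $a,b\ge 1$ since any crossing edge contributes a vertex to each side.

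For $d\ge 4$ the argument is essentially immediate: every crossing edge joins a vertex in $H^+$ to one in $H^-$, so the crossing edges form a subgraph of the complete bipartite graph on the partition, giving
\[
cv_P(H)\ \le\ ab\ \le\ \left\lfloor n/2\right\rfloor \left\lceil n/2\right\rceil.
\]
By Theorem \ref{cyclic polytope slices} this value is attained by $C_d(n)$: setting $i=0$ with $a=\lfloor n/2\rfloor$ and $b=\lceil n/2\rceil$ already places $\lfloor n/2\rfloor\lceil n/2\rceil$ in the VSS, and a short check confirms no choice $i>0$ can do better, since the quadratic loss in $ab$ from replacing $n$ by $n-i$ outweighs the linear bonus $+i$.

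For $d=3$ the bipartite bound $ab$ grows quadratically while the true answer is linear, so a finer estimate is needed. I would combine Euler's formula, giving at most $3n-6$ edges in any $3$-polytope, with the standard fact that for any hyperplane $H$ missing $V(P)$, the induced subgraphs of the $1$-skeleton on $V(P)\cap H^+$ and on $V(P)\cap H^-$ are each connected. I would prove this via a monotone-path argument: fixing a linear functional $f$ that vanishes on $H$ and is positive on $H^+$, from any vertex with $f>0$ one can follow an edge path of $P$ along which $f$ strictly increases up to the $f$-maximizer of $P$; since $f$ only grows, the path stays inside $V(P)\cap H^+$, and the symmetric statement gives connectivity on $V(P)\cap H^-$. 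Consequently at least $(a-1)+(b-1)=n-2$ edges of $P$ are non-crossing, leaving at most $(3n-6)-(n-2)=2(n-2)$ crossing edges, which Theorem \ref{cyclic polytope slices} shows is tight for $C_3(n)$.

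The main obstacle I anticipate is formalizing the monotone-path connectivity argument in the $d=3$ step, making sure the path stays strictly inside the appropriate open half-space rather than only inside its closure; once this is granted, both cases come down to a one-line count, and the matching achievability follows directly from the explicit description of the VSS of $C_d(n)$ in Theorem \ref{cyclic polytope slices}.
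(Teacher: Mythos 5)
Your proof is correct, and it uses the same two counting ingredients as the paper (Lemma \ref{lemma 1} to reduce to vertex-avoiding hyperplanes; Euler plus connectedness of the two induced subgraphs for $d=3$; the bipartite bound $ab\le\lfloor n/2\rfloor\lceil n/2\rceil$ for $d>3$), but the logical packaging differs in a way worth noting. The paper first invokes the Upper Bound Theorem to argue that, since $C_d(n)$ maximizes the number of edges, ``to bound $\nu(P)$ it suffices to bound $\nu(C_d(n))$,'' and then reads off the bounds from the proof of Theorem \ref{cyclic polytope slices}. Strictly speaking, having more edges does not by itself imply having a larger $\nu$, so that reduction is a shortcut; you instead apply both estimates directly to an arbitrary $d$-polytope $P$, which closes that gap and makes the UBT unnecessary (for $d=3$ you only need $|E(P)|\le 3n-6$ from planarity, and for $d>3$ you need no edge count at all). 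Your monotone-path argument for connectedness of the subgraph induced on $V(P)\cap H^+$ is sound (a non-maximizing vertex of the linear functional always has an improving neighbor, and the increasing path stays in the open half-space), though the paper already supplies this as Proposition \ref{Proposition:Neighbors} via a different convex-hull argument, so you could simply cite it. The only cosmetic blemish is your justification that $a,b\ge 1$: the cleaner reason is that if all vertices lay strictly on one side of $H$ then $P=\mathrm{conv}(V(P))$ would miss $H$ entirely. Attainability by $C_d(n)$ follows from Theorem \ref{cyclic polytope slices} exactly as you say.
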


\begin{figure}[ht]
    \centering
    \input{Figures/cyclic.tikz}
\caption{Visualization of the $1$-skeletons of $C_3(7)$ and $C_4(7)$, alongside two hyperplanes (shown as purple lines) positioned to maximize the number of edge intersections in each case.}
\label{fig:cyclic}
\end{figure}

\begin{proof}[ Proof of Theorem \ref{Them:max vertices}]
    By the Upper Bound Theorem, the cyclic polytope $C_d(n)$ maximizes the number of faces of each dimension, in particular, the number of edges among all $d$-polytopes with $n$ vertices. According to Lemma \ref{lemma 1}, the value of $\nu(P)$ is achieved by hyperplane slices that intersect the interior of the largest possible number of edges of $P$. Therefore, to bound $\nu(P)$, it suffices to bound $\nu(C_d(n))$.

From Lemma~\ref{lemma 1}, we know that for $d = 3$, the maximum number of vertices that can appear in a slice of the cyclic polytope is $2(n - 2)$.

For dimensions $d > 3$, this maximum is given by $\lfloor n/2 \rfloor \lceil n/2 \rceil$, and it is attained when the slicing hyperplane $H$ is a \emph{bisecting hyperplane} (see Figure~\ref{fig:cyclic}); that is, $H$ divides the vertices so that $\lfloor n / 2 \rfloor$ lie in one open half-space and $\lceil n / 2 \rceil$ in the other.\\
\end{proof}

\section{On the Vertex Slice Sequences for 3-polytopes}\label{Sec:GapsR3}

An interesting experimental phenomenon we have observed is that nearly all $3$-polytopes appear to have a complete VSS, which means that all numbers from $1$ to $\nu(P)$ appear in the sequence. In fact, four out of the five Platonic solids satisfy this property, the icosahedron being the only exception, which lacks a slice with exactly four vertices. Next, we will prove the existence of an infinite family of $3$-polytopes with gaps.

\begin{proposition}\label{Proposition:existence}
Let $k,d\in \mathbb{N}$ with $k\geq d$, and $P$ be a $d$-polytope. If the $1$-skeleton of $P$ is $k$-connected, and $P$ has no faces with exactly $r$ vertices, with $r < k$. Then, $r$ is a gap for $P$.
\end{proposition}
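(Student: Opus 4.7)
The plan is to argue by contradiction: assume some hyperplane $H \in \mathcal{H}_P$ achieves $cv_P(H) = r$, and derive a contradiction from $k$-connectivity together with the face hypothesis. The organizing observation is that $H$ partitions $V(P)$ into three sets $V^0 = V(P) \cap H$, $V^+ = V(P) \cap H^+$, and $V^- = V(P) \cap H^-$, and that by the vertex characterization recalled in the introduction, the vertices of $H \cap P$ are exactly $V^0$ together with the edges of $P$ joining $V^+$ to $V^-$. Writing $e(V^+,V^-)$ for the number of such crossing edges, the assumption becomes $|V^0| + e(V^+,V^-) = r$.

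First I would dispatch the supporting-hyperplane case, where $V^+ = \emptyset$ or $V^- = \emptyset$ but $V^0 \neq \emptyset$. Here $H \cap P$ is a proper face $F$ of $P$ and $V(F) = V^0$, so $F$ is a face with exactly $r$ vertices, directly contradicting the hypothesis that $P$ has no face with $r$ vertices.

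Next, for the case $V^0 = \emptyset$, the requirement that $H$ actually meets $P$ forces both $V^+$ and $V^-$ to be nonempty, and the $r$ crossing edges then form an edge cut of the $1$-skeleton of $P$. Since $k$-vertex-connectivity implies $k$-edge-connectivity, this cut has size at least $k$, contradicting $r < k$.

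The remaining case is $V^0, V^+, V^- \neq \emptyset$; this is the main obstacle, and I would treat it via Menger's theorem. Pick $u \in V^+$ and $v \in V^-$; by $k$-connectivity there exist $k$ internally vertex-disjoint $u$--$v$ paths in the $1$-skeleton. Because $H$ separates $u$ from $v$, every such path either passes through an internal vertex of $V^0$ or uses an edge with one endpoint in $V^+$ and the other in $V^-$. Internal vertex-disjointness immediately ensures that no vertex of $V^0$ is used by two paths, and a short bookkeeping step shows that two distinct internally vertex-disjoint $u$--$v$ paths cannot share an edge either (sharing any edge would force sharing an internal endpoint, except for the degenerate case where both paths equal the edge $uv$, which cannot be repeated). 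Consequently the $k$ paths consume $k$ distinct cut elements, giving $|V^0| + e(V^+,V^-) \geq k$ and contradicting $r < k$. Combining the three cases completes the proof, with the Menger step being the genuinely nontrivial ingredient.
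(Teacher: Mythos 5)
Your proposal is correct and follows essentially the same route as the paper: the tangent (supporting) case is ruled out by the hypothesis on faces, and the separating case is handled by Menger's theorem, with each of the $k$ disjoint paths forced to contribute a distinct vertex to the slice. Your version merely adds the explicit bookkeeping (splitting off the $V^0=\emptyset$ subcase and verifying that disjoint paths cannot share a crossing edge) that the paper leaves implicit.
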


\begin{proof}
Let $H$ be a hyperplane intersecting $P$. If $H$ is tangent to $P$, then $H \cap P$ is a face of $P$. By hypothesis, this face cannot have exactly $r$ vertices, so $r$ is not realized in this case. On the other hand if $H$ is not tangent to $P$, then $H$ strictly separates a  pair of vertices of $P$ each lying in one of the half-spaces defined by $H$. Since the $1$-skeleton of $P$ is $k$-connected, for any pair of vertices in different half-spaces, there exist $k$ disjoint paths connecting them. Each of these paths must intersect $H$, and each intersection contributes a vertex to the slice $H \cap P$. 
Because there are at least $k$ such paths, the slice $H \cap P$ must have at least $k$ vertices. Since $r < k$, it follows that $H \cap P$ cannot have exactly $r$ vertices. 

\end{proof}

% \begin{proposition}\label{Proposition:existence}
% Let $P$ be a $3$-polytope such that its $1$-skeleton is $5$-connected and it does not have faces with exactly $4$ vertices. Then $4$ is a gap for $P$. \textcolor{blue}{Generalizar este resultado para dimension $d$ y $k$ conectividad $k<d$.}
% \end{proposition}

% \begin{proof}
% Let \( H \) be a hyperplane intersecting \( P \). If \( H \) is tangent to \( P \), then the intersection is a face of \( P \), and by hypothesis, this face cannot have exactly $4$ vertices.  

% If \( H \) is not tangent to \( P \), then \( H \) leaves vertices of \( P \) strictly contained in both half-spaces defined by \( H \). Due to the $5$-connectivity of the $1$-skeleton of \( P \), for every pair of vertices strictly contained in different half-spaces, there exist five disjoint paths connecting them. Since \( H \) separates these vertices, it must intersect each of these paths.  

% Each of these intersections of \( H \) with the paths induces a vertex in the slice \( H \cap P \). Therefore, the slice \( H \cap P \) has at least five vertices, which implies that $4$ is a gap for $P$.
% \end{proof} 

In dimension three, the icosahedron satisfies the hypotheses of Proposition \ref{Proposition:existence} for $k = 5$ and $r = 4$ (see Figure \ref{ej_triangulation}).

\begin{figure}[ht]
\centering
\input{Figures/IcosahedronPaths.tikz}
\caption{For every combinatorially different pair of vertices (points in red) in the icosahedron, there exist five disjoint paths (shown in color) connecting them.}
\label{icosahedron paths}
\end{figure}

A construction of an infinite family of graphs that are 5-connected, simple, planar and without quadrangular faces can be found in \cite{barnette1974generating}. Therefore, by Steinitz's theorem, all the embeddings of these graphs have 4 as a gap. In the next part we show a different family.

\subsection{VSS of Stacked Polytopes}

Now we define an operation $\sigma$ on the space of convex $d$-polytopes. Given a polytope $P$, $\sigma(P)$ is constructed by stacking a pyramid onto every facet of $P$. Specifically, for each facet $F$ of $P$, we add a new vertex that lies strictly beyond $F$ but beneath all other facets, and then take the convex hull.
We call $\sigma(P)$ the \emph{all-facets stacked polytope induced by} $P$.

Note, this operation is slightly more general than the classical \emph{stacked polytopes}, which are built by sequentially stacking simplices onto the facets of a starting simplex (see \cite{gonska2011inscribable,ziegler2012lectures}). Simplicial stacked polytopes are fundamental in polytope theory because they minimize the number of faces among simplicial $d$-polytopes with a given number of vertices, achieving equality in the Lower Bound Theorem (\cite{barnette1971minimum,barnette1973proof,grunbaum2003convex}).

% Let $P$ be a $d$-dimensional polytope. The \emph{facet-stacked polytope of $P$}, denoted as $\sigma(P)$, is defined as the convex hull of the vertices of $P$ and a set of points $\{v_F\}$, where $F$ ranges over the facets of $P$. Each point $v_F$ is chosen just outside the hyperplane containing $F$, close enough so that the points $\{v_F\}$ and the vertices of $P$ are also the vertices of the convex hull, and each $v_F$ is adjacent to all the vertices of the facet $F$. Formally:

% \[
% \sigma(P) = \operatorname{conv} \left( V(P) \cup \left\{ v_F \in \mathbb{R}^d \;\middle|\; F \in \mathcal{F}(P),\ v_F \notin \operatorname{aff}(F),\ v_F \text{ is adjacent to all vertices of } F \right\} \right).
% \]

\begin{figure}[ht]
\begin{center}
\tikzset{every picture/.style={line width=0.75pt}} %set default line width to 0.75pt        

\begin{tikzpicture}[x=0.75pt,y=0.75pt,yscale=-1,xscale=1]
%uncomment if require: \path (0,300); %set diagram left start at 0, and has height of 300

%Shape: Polygon [id:ds22502556742004942] 
\draw  [draw opacity=0][fill={rgb, 255:red, 74; green, 144; blue, 226 }  ,fill opacity=0.1 ] (531.83,200.17) -- (416.83,199.17) -- (444,131.08) -- (464.5,112.5) -- (485.83,106.17) -- (510,135.08) -- cycle ;
%Shape: Polygon [id:ds2324885496580864] 
\draw  [color={rgb, 255:red, 65; green, 117; blue, 5 }  ,draw opacity=1 ] (416.83,199.17) -- (444,131.08) -- (464.5,112.5) -- cycle ;
%Straight Lines [id:da4320735904381312] 
\draw [color={rgb, 255:red, 65; green, 117; blue, 5 }  ,draw opacity=0.2 ] [dash pattern={on 1.5pt off 0.75pt}]  (485.83,106.17) -- (444,131.08) ;
%Shape: Polygon [id:ds123514710016289] 
\draw  [color={rgb, 255:red, 65; green, 117; blue, 5 }  ,draw opacity=1 ][line width=0.75]  (510,135.08) -- (531.83,200.17) -- (464.5,112.5) -- (485.83,106.17) -- cycle ;
%Shape: Polygon [id:ds3164971734964557] 
\draw  [color={rgb, 255:red, 208; green, 2; blue, 27 }  ,draw opacity=1 ][fill={rgb, 255:red, 74; green, 144; blue, 226 }  ,fill opacity=0.1 ] (244.33,199.42) -- (129.33,198.42) -- (177,111.75) -- (198.33,105.42) -- cycle ;
%Straight Lines [id:da37650637383730134] 
\draw [color={rgb, 255:red, 208; green, 2; blue, 27 }  ,draw opacity=1 ]   (177,111.75) -- (244.33,199.42) ;
%Straight Lines [id:da018100690434658118] 
\draw [color={rgb, 255:red, 208; green, 2; blue, 27 }  ,draw opacity=1 ][fill={rgb, 255:red, 255; green, 0; blue, 0 }  ,fill opacity=0.25 ]   (464.5,112.5) -- (531.83,200.17) ;
%Straight Lines [id:da8095061951377982] 
\draw [color={rgb, 255:red, 65; green, 117; blue, 5 }  ,draw opacity=0.2 ] [dash pattern={on 1.5pt off 0.75pt}]  (480.56,171.72) -- (416.83,199.17) ;
%Straight Lines [id:da8488183852743387] 
\draw [color={rgb, 255:red, 65; green, 117; blue, 5 }  ,draw opacity=0.2 ] [dash pattern={on 1.5pt off 0.75pt}]  (480.56,171.72) -- (531.83,200.17) ;
%Straight Lines [id:da3009225023859863] 
\draw [color={rgb, 255:red, 65; green, 117; blue, 5 }  ,draw opacity=0.2 ] [dash pattern={on 1.5pt off 0.75pt}]  (485.83,106.17) -- (480.56,171.72) ;
%Straight Lines [id:da8718235974709885] 
\draw [color={rgb, 255:red, 65; green, 117; blue, 5 }  ,draw opacity=1 ]   (510,135.08) -- (464.5,112.5) ;
%Shape: Circle [id:dp16668892729501716] 
\draw  [color={rgb, 255:red, 208; green, 2; blue, 27 }  ,draw opacity=1 ][fill={rgb, 255:red, 208; green, 2; blue, 27 }  ,fill opacity=1 ] (196.05,105.42) .. controls (196.05,104.16) and (197.07,103.13) .. (198.33,103.13) .. controls (199.59,103.13) and (200.62,104.16) .. (200.62,105.42) .. controls (200.62,106.68) and (199.59,107.7) .. (198.33,107.7) .. controls (197.07,107.7) and (196.05,106.68) .. (196.05,105.42) -- cycle ;
%Shape: Circle [id:dp3105830788500612] 
\draw  [color={rgb, 255:red, 208; green, 2; blue, 27 }  ,draw opacity=1 ][fill={rgb, 255:red, 208; green, 2; blue, 27 }  ,fill opacity=1 ] (174.72,111.75) .. controls (174.72,110.49) and (175.74,109.47) .. (177,109.47) .. controls (178.26,109.47) and (179.28,110.49) .. (179.28,111.75) .. controls (179.28,113.01) and (178.26,114.03) .. (177,114.03) .. controls (175.74,114.03) and (174.72,113.01) .. (174.72,111.75) -- cycle ;
%Shape: Circle [id:dp4660029673589008] 
\draw  [color={rgb, 255:red, 208; green, 2; blue, 27 }  ,draw opacity=1 ][fill={rgb, 255:red, 208; green, 2; blue, 27 }  ,fill opacity=1 ] (242.05,199.42) .. controls (242.05,198.16) and (243.07,197.13) .. (244.33,197.13) .. controls (245.59,197.13) and (246.62,198.16) .. (246.62,199.42) .. controls (246.62,200.68) and (245.59,201.7) .. (244.33,201.7) .. controls (243.07,201.7) and (242.05,200.68) .. (242.05,199.42) -- cycle ;
%Shape: Circle [id:dp7842129473201469] 
\draw  [color={rgb, 255:red, 208; green, 2; blue, 27 }  ,draw opacity=1 ][fill={rgb, 255:red, 208; green, 2; blue, 27 }  ,fill opacity=1 ] (127.05,198.42) .. controls (127.05,197.16) and (128.07,196.13) .. (129.33,196.13) .. controls (130.59,196.13) and (131.62,197.16) .. (131.62,198.42) .. controls (131.62,199.68) and (130.59,200.7) .. (129.33,200.7) .. controls (128.07,200.7) and (127.05,199.68) .. (127.05,198.42) -- cycle ;
%Shape: Circle [id:dp4611114164462107] 
\draw  [color={rgb, 255:red, 208; green, 2; blue, 27 }  ,draw opacity=1 ][fill={rgb, 255:red, 208; green, 2; blue, 27 }  ,fill opacity=1 ] (483.55,106.17) .. controls (483.55,104.91) and (484.57,103.88) .. (485.83,103.88) .. controls (487.09,103.88) and (488.12,104.91) .. (488.12,106.17) .. controls (488.12,107.43) and (487.09,108.45) .. (485.83,108.45) .. controls (484.57,108.45) and (483.55,107.43) .. (483.55,106.17) -- cycle ;
%Shape: Circle [id:dp11112400643357545] 
\draw  [draw opacity=0][fill={rgb, 255:red, 189; green, 200; blue, 176 }  ,fill opacity=1 ] (478.08,171.72) .. controls (478.08,170.36) and (479.19,169.25) .. (480.56,169.25) .. controls (481.92,169.25) and (483.03,170.36) .. (483.03,171.72) .. controls (483.03,173.09) and (481.92,174.2) .. (480.56,174.2) .. controls (479.19,174.2) and (478.08,173.09) .. (478.08,171.72) -- cycle ;
%Shape: Circle [id:dp2179834511620422] 
\draw  [color={rgb, 255:red, 65; green, 117; blue, 5 }  ,draw opacity=1 ][fill={rgb, 255:red, 65; green, 117; blue, 5 }  ,fill opacity=1 ] (507.72,135.08) .. controls (507.72,133.82) and (508.74,132.8) .. (510,132.8) .. controls (511.26,132.8) and (512.28,133.82) .. (512.28,135.08) .. controls (512.28,136.34) and (511.26,137.37) .. (510,137.37) .. controls (508.74,137.37) and (507.72,136.34) .. (507.72,135.08) -- cycle ;
%Shape: Circle [id:dp8536054664907107] 
\draw  [color={rgb, 255:red, 65; green, 117; blue, 5 }  ,draw opacity=1 ][fill={rgb, 255:red, 65; green, 117; blue, 5 }  ,fill opacity=1 ] (441.72,131.08) .. controls (441.72,129.82) and (442.74,128.8) .. (444,128.8) .. controls (445.26,128.8) and (446.28,129.82) .. (446.28,131.08) .. controls (446.28,132.34) and (445.26,133.37) .. (444,133.37) .. controls (442.74,133.37) and (441.72,132.34) .. (441.72,131.08) -- cycle ;
%Straight Lines [id:da2562305879648714] 
\draw [color={rgb, 255:red, 65; green, 117; blue, 5 }  ,draw opacity=1 ]   (464.5,112.5) -- (461.89,170.39) ;
%Straight Lines [id:da5622513975368748] 
\draw [color={rgb, 255:red, 65; green, 117; blue, 5 }  ,draw opacity=1 ]   (461.89,170.39) -- (416.83,199.17) ;
%Straight Lines [id:da8628163062283662] 
\draw [color={rgb, 255:red, 65; green, 117; blue, 5 }  ,draw opacity=1 ]   (461.89,170.39) -- (531.83,200.17) ;
%Shape: Circle [id:dp8298079488775267] 
\draw  [color={rgb, 255:red, 65; green, 117; blue, 5 }  ,draw opacity=1 ][fill={rgb, 255:red, 65; green, 117; blue, 5 }  ,fill opacity=1 ] (459.61,170.39) .. controls (459.61,169.13) and (460.63,168.11) .. (461.89,168.11) .. controls (463.15,168.11) and (464.17,169.13) .. (464.17,170.39) .. controls (464.17,171.65) and (463.15,172.67) .. (461.89,172.67) .. controls (460.63,172.67) and (459.61,171.65) .. (459.61,170.39) -- cycle ;
%Shape: Circle [id:dp4361566021566843] 
\draw  [color={rgb, 255:red, 208; green, 2; blue, 27 }  ,draw opacity=1 ][fill={rgb, 255:red, 208; green, 2; blue, 27 }  ,fill opacity=1 ] (462.22,112.5) .. controls (462.22,111.24) and (463.24,110.22) .. (464.5,110.22) .. controls (465.76,110.22) and (466.78,111.24) .. (466.78,112.5) .. controls (466.78,113.76) and (465.76,114.78) .. (464.5,114.78) .. controls (463.24,114.78) and (462.22,113.76) .. (462.22,112.5) -- cycle ;
%Shape: Circle [id:dp05981643415912141] 
\draw  [color={rgb, 255:red, 208; green, 2; blue, 27 }  ,draw opacity=1 ][fill={rgb, 255:red, 208; green, 2; blue, 27 }  ,fill opacity=1 ] (414.55,199.17) .. controls (414.55,197.91) and (415.57,196.88) .. (416.83,196.88) .. controls (418.09,196.88) and (419.12,197.91) .. (419.12,199.17) .. controls (419.12,200.43) and (418.09,201.45) .. (416.83,201.45) .. controls (415.57,201.45) and (414.55,200.43) .. (414.55,199.17) -- cycle ;
%Shape: Circle [id:dp05427802782291269] 
\draw  [color={rgb, 255:red, 208; green, 2; blue, 27 }  ,draw opacity=1 ][fill={rgb, 255:red, 208; green, 2; blue, 27 }  ,fill opacity=1 ] (529.55,200.17) .. controls (529.55,198.91) and (530.57,197.88) .. (531.83,197.88) .. controls (533.09,197.88) and (534.12,198.91) .. (534.12,200.17) .. controls (534.12,201.43) and (533.09,202.45) .. (531.83,202.45) .. controls (530.57,202.45) and (529.55,201.43) .. (529.55,200.17) -- cycle ;
%Straight Lines [id:da9844707632578988] 
\draw [color={rgb, 255:red, 208; green, 2; blue, 27 }  ,draw opacity=0.2 ] [dash pattern={on 1.5pt off 0.75pt}]  (416.83,199.17) -- (485.83,106.17) ;
%Straight Lines [id:da36032506094343075] 
\draw [color={rgb, 255:red, 208; green, 2; blue, 27 }  ,draw opacity=0.2 ] [dash pattern={on 1.5pt off 0.75pt}]  (129.33,198.42) -- (198.33,105.42) ;
%Straight Lines [id:da724434074231191] 
\draw [color={rgb, 255:red, 208; green, 2; blue, 27 }  ,draw opacity=1 ]   (464.5,112.5) -- (416.83,199.17) ;
%Straight Lines [id:da5985930252798866] 
\draw [color={rgb, 255:red, 208; green, 2; blue, 27 }  ,draw opacity=1 ]   (531.83,200.17) -- (416.83,199.17) ;
%Straight Lines [id:da1831558977860135] 
\draw [color={rgb, 255:red, 208; green, 2; blue, 27 }  ,draw opacity=1 ]   (485.83,106.17) -- (464.5,112.5) ;
%Straight Lines [id:da6892697898026312] 
\draw [color={rgb, 255:red, 208; green, 2; blue, 27 }  ,draw opacity=0.2 ] [dash pattern={on 1.5pt off 0.75pt}]  (531.83,200.17) -- (485.83,106.17) ;

\end{tikzpicture}
\caption{A regular tetrahedron $T$ and the stacked tetrahedron $\sigma(T)$.}
\label{ej_triangulation}
\end{center}
\end{figure}

% \begin{proposition}
% Let $P$ be a $3$-polytope such that $k\in \mathbb{N}$ is a gap in the vertex slice sequence of $P$. If $k$ the stacked polytope $\sigma(P)$  
% \end{proposition}

\begin{proposition}\label{Proposition:BeforeGapsR3}
Let $T$, $C$, $O$, and $I$ denote the regular tetrahedron, cube, octahedron, and icosahedron, respectively. Then:
\begin{itemize}
    \item $\sigma(T)$ does not have slices with four vertices,
    \item $\sigma(C)$ and $\sigma(O)$ do not admit slices with five vertices,
    \item $\sigma(I)$ does not have slices with four, six, seven, eight or nine vertices.
\end{itemize}
\end{proposition}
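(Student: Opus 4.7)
My plan is to proceed uniformly across all four cases. First I observe that each $\sigma(P)$ in the statement is simplicial: the tetrahedron, octahedron and icosahedron are already simplicial, and stacking a pyramid on each square face of the cube yields four triangular faces. Every facet of $\sigma(P)$ is therefore a triangle, so every tangent slice has at most three vertices; since all the excluded values are at least four, only non-tangent slices can produce them. For a non-tangent hyperplane $H$ disjoint from the vertex set, the slice vertex count equals the size of the edge cut of the $1$-skeleton of $\sigma(P)$ induced by $S = V(\sigma(P)) \cap H^+$.

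I would parametrise $S$ by $(|A|,|P|,X)$, with $A$ the set of original vertices in $S$, $P$ the apex vertices in $S$, and $X$ a suitable count of apex/original incidences inside $S$; the cut size then has a closed-form expression. For $\sigma(T)$ it equals $|A|(7-|A|)+|P|(3-2|A|)+2X$, and enumerating admissible triples yields the realisable set $\{0,3,6,7,8,9,10,11,12\}$, missing $4$. For $\sigma(C)$ every vertex has even degree (originals $6$, apices $4$), so every cut has even size and $5$ is excluded. For $\sigma(O)$ a parity argument forces $|P|$ to be odd for a cut of size $5$; the antipodal structure of the octahedron (the edge-complement is a perfect matching) then rules out each remaining configuration. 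Non-generic slices are handled by a perturbation argument refining Lemma~\ref{lemma 1}: the two perturbations of $H$ along its normal yield generic slices whose sizes are constrained to the realisable list and to differ from the original by specific amounts, excluding a forbidden intermediate value in each case.

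The main obstacle is $\sigma(I)$, where the $1$-skeleton does admit cuts of some of the forbidden sizes. A careful enumeration along the same lines shows that $4$, $7$ and $8$ are already absent from the set of graph-theoretic cut sizes of $\sigma(I)$, whereas $6$ and $9$ arise only from isolating two or three apex vertices (or their complements). For these combinatorially possible cases I plan to invoke a geometric realisability argument: the segment joining two apex vertices $p_{F_1},p_{F_2}$ lies close to the segment $c_{F_1}c_{F_2}$ between the face centroids, which passes through the interior of the icosahedron $I$; hence $\mathrm{conv}\{p_{F_1},p_{F_2}\}$ meets $\mathrm{conv}(V(\sigma(I))\setminus\{p_{F_1},p_{F_2}\}) \supseteq I$, and no hyperplane can strictly separate the two apices from the remaining vertices. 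The same argument extends to any three apices, whose convex hull contains such a segment. The icosahedral enumeration together with this geometric verification is the most intricate step.
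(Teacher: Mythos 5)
Your reduction to edge cuts of the $1$-skeleton of $\sigma(P)$ is a genuinely different route from the paper, which instead classifies slices by how $H$ meets the original solid $P$ and uses the fact that each facet of $P$ crossed by $H$ contributes at least two facets of $\sigma(P)$ to the slice polygon. The parts of your plan that concern generic, vertex-avoiding hyperplanes are sound: the cut sizes of the $1$-skeleton of $\sigma(T)$ are indeed $\{0,3,6,7,8,9,10,11,12\}$, all degrees in $\sigma(C)$ are even so every cut is even, and in $\sigma(O)$ the parity of a cut equals the parity of the number of apices on one side. The genuine gap is your treatment of hyperplanes passing through vertices of $\sigma(P)$. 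Lemma~\ref{lemma 1} gives only the one-sided bound $cv(H^{\pm})\geq cv(H)$; writing $cv(H)=c+k$ with $c$ crossed edges and $k$ vertices on $H$, the perturbed slices have $c+\sum_{v}n^{\pm}(v)$ vertices, and the constraints this yields do not exclude the forbidden values. For instance, for $\sigma(C)$ a hyperplane through a single apex $p$ (degree $4$) with $n^{+}(p)=n^{-}(p)=2$ and $c=4$ would give a $5$-vertex slice while both perturbations are even and at least $6$, perfectly consistent with your parity and monotonicity constraints. Ruling such configurations out requires enumerating and refuting each pair $(c,k)$ with $c+k$ forbidden and $k\geq 1$ --- which is essentially the case analysis the paper carries out directly on $H\cap P$, and which your write-up replaces with an assertion.

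The $\sigma(I)$ case has a second gap. The claim that the segment between two apices ``lies close to'' the segment between the face centroids is not a proof: the construction of $\sigma$ only pins each apex to the region beyond its facet and beneath all the others, so the argument must be made for every admissible placement, not just apices near the centroids. Moreover, even if the $2$-apex and $3$-apex cuts are shown to be geometrically unrealisable, a $6$- or $9$-vertex slice of $\sigma(I)$ could a priori arise from a hyperplane containing vertices (any $c+k=6$ or $9$ with $k\geq 1$), which falls back into the unproved perturbation step; the paper's pentagonal slices through five vertices of $I$ show that vertex-rich slices of this polytope genuinely occur. The paper sidesteps both difficulties by classifying all planes through the icosahedron itself (through five vertices, through two edges and four faces, or through at least five faces) and reading off the slice sizes of $\sigma(I)$ from that classification; your approach buys a cleaner combinatorial core for the generic case but leaves precisely the non-generic positions, which are the delicate ones, unresolved.
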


\begin{proof}
Let $H$ be a hyperplane intersecting $\sigma(P)$, where $P$ is one of the polytopes $T$, $C$, $O$, or $I$. There are two cases:

\begin{enumerate}
    \item \textbf{$H$ intersects the relative interior of $P$:} \\
    Since each face of $P$ is stacked, $H$ intersects at least two facets of $\sigma(P)$ for every facet of $P$ that it intersects.

    \item \textbf{$H$ does not intersect the relative interior of $P$:} \\
    In this scenario, $H$ either completely contains $F$, intersects at most one of its edges, or does not intersect $F$ at all (otherwise, it would intersect the relative interior of $P$). Consequently, the slice determined by $H$ on $\sigma(P)$ contains one, two, or exactly the number of vertices of $F$.
\end{enumerate}

We analyze each polytope separately:

\vspace{0.2cm}

\textbf{The Tetrahedron ($T$):} \\
If $H$ intersects the relative interior of $T$, it intersects either one edge and two faces of $T$, or three faces. As a result, the slice in $\sigma(T)$ has either five or six vertices. If $H$ does not intersect the relative interior of $T$, the slice has one, two, or three vertices.  
Thus, $\sigma(T)$ does not have slices with four vertices.

\vspace{0.2cm}

\textbf{The Cube ($C$):} \\
If $H$ intersects the relative interior of $C$, then $H$ intersects at least two facets. Furthermore, intersections that result in triangular slices intersect three facets of $C$. Consequently, the resulting slices in $\sigma(C)$ have at least six vertices.  
If $H$ does not intersect the relative interior of $C$, the slices may have one, two, three, or four vertices.  
Thus, $\sigma(C)$ does not have slices with five vertices.

\vspace{0.2cm}

\textbf{The Octahedron ($O$):} \\
Planes $H$ that intersect the relative interior of $O$ do so in one of two ways: they either pass through four vertices without intersecting any face or they pass through at least three faces. Consequently, the slices in $\sigma(O)$ have four vertices or more than six.  
If $H$ does not intersect the relative interior of $O$, the slices may have one, two, three, or four vertices.   
Thus, $\sigma(O)$ does not have slices with five vertices.

\vspace{0.2cm}

\textbf{The Icosahedron ($I$):} \\
Planes $H$ intersecting the relative interior of $I$ do so in one of three ways:  
1. They pass through five vertices of $I$ without intersecting any face.  
2. They pass through exactly two edges and four faces.  
3. They pass through at least five faces.  
In these cases, the slices in $\sigma(I)$ have five, ten, or more than ten vertices, respectively.  
If $H$ does not intersect the relative interior of $I$, the slices may have one, two, three or five vertices.  
Thus, $\sigma(I)$ does not have slices with four, six, or seven vertices.
\end{proof}

\begin{lemma}\label{lemma:sigma_sigma}
    If $d+1$ is a \emph{gap} in the \emph{vertex slice sequence} of a $d$-polytope $P$, then it is also a gap in the \emph{vertex slice sequence} of $\sigma(P)$.
\end{lemma}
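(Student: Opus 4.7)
I would argue by contrapositive: assume $\sigma(P)$ admits a slice with exactly $d+1$ vertices, and derive a slice of $P$ with $d+1$ vertices, contradicting the gap hypothesis. Let $H$ realize $|V(H\cap\sigma(P))|=d+1$. Since $V(\sigma(P))=V(P)\cup\{a_F: F\text{ facet of }P\}$ and $E(\sigma(P))=E(P)\cup\{a_Fv: v\in V(F)\}$, a direct count of slice vertices yields the decomposition
\[
|V(H\cap\sigma(P))| \;=\; |V(H\cap P)| \;+\; \rho,
\]
where $\rho$ tallies the slice vertices coming from the stacking: apices $a_F$ lying on $H$ (Type B) plus segments $a_Fv$ strictly crossed by $H$ (Type D). Invoking the analogue of Lemma~\ref{lemma 1} for $\sigma(P)$, I would first reduce to a hyperplane $H$ avoiding every vertex of $\sigma(P)$, so that $\rho$ consists entirely of Type D contributions; the argument then splits on whether $H$ meets the interior of $P$.

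Case 1 ($H\cap\mathrm{int}(P)\neq\emptyset$). Here $H\cap P$ is a $(d-1)$-polytope, hence has at least $d$ vertices, forcing $\rho\in\{0,1\}$. If $\rho=0$, then $H\cap P$ itself has $d+1$ vertices and we are done. The hard subcase is $\rho=1$: now $|V(H\cap P)|=d$, so $H\cap P$ is a $(d-1)$-simplex with exactly $d$ facets, each of the form $H\cap F'$ for a distinct facet $F'$ of $P$ split by $H$ (meaning $V(F')$ has vertices strictly on both sides of $H$). The crucial combinatorial observation is that every such split facet $F'$ contributes at least one vertex to $\rho$: since $a_{F'}$ lies in exactly one open half-space determined by $H$, the split hypothesis forces some $v\in V(F')$ onto the opposite side, yielding a Type D vertex from the edge $a_{F'}v$. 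Summing over the $d$ distinct split facets gives $\rho\geq d\geq 3$, contradicting $\rho=1$.

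Case 2 ($H\cap\mathrm{int}(P)=\emptyset$). Then $P$ lies in one closed half-space of $H$, say $H^-\cup H$. The set $\sigma(P)\cap H^+$ is convex (hence connected), while distinct pyramids $\mathrm{Pyr}(F_1),\mathrm{Pyr}(F_2)$ meet only in the common face $F_1\cap F_2\subset P\subset H^-\cup H$; so their intersections with the open half-space $H^+$ are disjoint, and connectedness forces at most one apex $a_F$ to lie strictly in $H^+$. Thus $H$ cuts off a single pyramid and $H\cap\sigma(P)$ is a cross-section of $\mathrm{Pyr}(F)$ whose vertex count is exactly $|V(F)|$ (one intersection point per edge $a_Fv$ with $v\in V(F)$). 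Hence $|V(F)|=d+1$, and $F$ itself, realized as the intersection of $P$ with its own supporting hyperplane, is a slice of $P$ with $d+1$ vertices.

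The main obstacle is the $\rho=1$ subcase of Case 1, where I must establish the uniform lower bound $\rho\geq d$ whenever $H\cap P$ is a simplex; this rests on the bijection between facets of the $(d-1)$-simplex slice and split facets of $P$, together with the structural fact that each split facet must push at least one apex-induced vertex into the slice of $\sigma(P)$. A secondary technicality is the reduction to a hyperplane that avoids all vertices of $\sigma(P)$: either a small perturbation preserves the value $d+1$ of the vertex count, or else one is already in a degenerate configuration in which a face of $P$ with $d+1$ vertices is directly visible and serves as the desired slice of $P$ via that face's supporting hyperplane.
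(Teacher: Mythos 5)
Your decomposition $|V(H\cap\sigma(P))|=|V(H\cap P)|+\rho$, the facet-counting bound $\rho\ge d$ in the interior case, and the connectedness argument confining the slice to a single pyramid in Case~2 are all sound, and in places sharper than the paper's own proof (which settles for $\rho\ge 2$ together with Balinski's $d$-connectivity, and asserts the single-pyramid claim without the argument you supply). The genuine gap is your opening reduction. Lemma~\ref{lemma 1} only produces a vertex-avoiding $H'$ with $cv_{\sigma(P)}(H')\ge cv_{\sigma(P)}(H)$, not equality, so a hyperplane meeting vertices of $\sigma(P)$ and realizing exactly $d+1$ need not survive the perturbation; for example, a hyperplane through a non-facial cycle of vertices of $P$ (the analogue of the equatorial square of the octahedron, which yields a $4$-vertex slice of $\sigma(O)$ while every nearby vertex-free plane yields at least six) can realize $d+1$ exactly even though all its perturbations realize strictly more. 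Your fallback --- ``a face of $P$ with $d+1$ vertices is directly visible'' --- does not cover this situation: there the witness is the slice $H\cap P$ itself, which is not a face of $P$. Nor does it cover the tangent case in which $H\cap\sigma(P)$ is a face of the form $\mathrm{conv}(G\cup\{a_F\})$ for a $d$-vertex proper face $G$ of a facet $F$; such a face has $d+1$ vertices but produces no $(d+1)$-vertex face of $P$ at all (for $d=3$ this cannot occur, but the lemma is stated for all $d$, and, to be fair, the paper's proof is silent on it too).

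The repair is to abandon the reduction and run your decomposition on the original $H$: when $H$ meets $\mathrm{int}(P)$ and $\rho=0$, the slice $H\cap P$ is already the desired witness (this absorbs the octahedron-type configurations), and the remaining work is to exclude $\rho=1$ and to classify tangent slices while allowing vertices and apexes of $\sigma(P)$ to lie on $H$. Note that with vertices on $H$ the correspondence you rely on degrades: an apex lying on $H$ contributes to $\rho$ with no split facet behind it, and a facet of the simplex $H\cap P$ may be a ridge of $P$ contained in $H$ rather than a section of a split facet, so $\rho\ge d$ no longer follows. This is essentially why the paper forgoes any perturbation and instead performs a direct case analysis on the position of $H$ relative to $P$ (tangent at a facet, through the interior, disjoint from $P$), keeping $H\cap P$ in view as the witness slice throughout.
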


\begin{proof}
    Let $H\in \mathcal{H}_{\sigma(P)}$. First, suppose that $H$ is tangent to $P$ at a facet $F$ with $n$ vertices. In this case, by convexity $H$ intersects $\sigma(P)$ in exactly the same $n \neq d+1$ vertices. 
    
    If $H$ intersects the relative interior of $P$, then it intersects the relative interior of at least two facets of $P$. Since $\sigma(P)$ adds a new vertex and new edges on each facet, the hyperplane $H$ will intersect at least two of these new elements (one for each intersected facet), which implies that the slice $H \cap \sigma(P)$ contains at least two new vertices, in addition to those already in $H \cap P$. Consequently, the total number of vertices in the slice increases by at least two, and since the $1$-skeleton of $P$ is $d$-connected, the number of vertices in $H \cap \sigma(P)$ is at least $d+2$.

    Finally, suppose that $H$ does not intersect $P$ but does intersect $\sigma(P)$. Then $H$ must intersect the pyramid constructed over some facet $F$ of $P$. If $H$ contains the new vertex $v$ (the vertex added over $F$), then $H$ is tangent to $\operatorname{conv}(F \cup \{v\})$, and the slice has at most $d$ vertices. On the other hand, if $H$ separates $v$ from $F$, then the vertices of the section come from the edges connecting $v$ to the vertices of $F$, and the slice contains exactly $\vert F\vert$ vertices. In both cases, by hypothesis, the number of vertices in the slice cannot be $d+1$.
\end{proof}

\begin{proof}[Proof of Theorem~\ref{Thm:GAPSR3}]   
By Proposition~\ref{Proposition:BeforeGapsR3}, four is a gap in the VSS of $\sigma(T)$. Applying Lemma~\ref{lemma:sigma_sigma} iteratively, it follows that four remains a gap in the VSS of $\sigma^n(T)$ for every $n \in \mathbb{N}$. Moreover, since there are always vertices of degree three, the 1-skeleton of $\sigma^n(T)$ is 3-connected but not 4-connected for all $n$.
\end{proof}

\section{A Poset to Identify All Slices}\label{sec:Posets}
% Regarding the task of listing all slices, particularly those with the maximum number of vertices, we now present an alternative algorithm that provides a combinatorial perspective on the distinct slices, differing from the one proposed in \cite{brandenburg2025best}. This approach uses partially ordered sets (posets) on the edges and vertices of the polytope to compute not only the slices with the maximum number of vertices but also all possible slices. This poset structure was first suggested by O'Neil \cite{o1971hyperplane} for the hypercube, and we now present a generalization of his idea. Although the algorithm may be computationally expensive and still relies on the methods of \cite{brandenburg2025best} to enumerate all directions $u$, it offers a deeper combinatorial understanding of the slices.\\
Regarding the task of listing all slices, particularly those with the maximum number of vertices, we now present an alternative technique that offers a combinatorial perspective on the distinct slices, differing from the approach proposed in \cite{brandenburg2025best}. This method uses partially ordered sets (posets) on the edges and vertices of the polytope to analyze not only the slices with the maximum number of vertices but also all possible slices. This poset structure was first suggested by O'Neil \cite{o1971hyperplane} for the hypercube, and we now present a generalization of his idea. Although this technique may be computationally expensive and still relies on the methods of \cite{brandenburg2025best} to enumerate all directions $u$, it provides a deeper combinatorial understanding of the slices. It is also interesting that our poset is almost dual to the poset of oriented 1-skeleta of polytopes presented in \cite{Hersh24,gaetzhersch}.

In what follows, if $H=\{u\cdot X=t\}$ is a hyperplane, denote $H^-=\{u\cdot X<t\}$ and $H^+=\{u\cdot X>t\}$ the two open half-spaces induced by $H$. We begin with an important remark regarding polytopes and hyperplanes.

\begin{proposition}\label{Proposition:Neighbors}
Let $P$ be a $d$-polytope and $H\in\mathcal{H}_P$. If $v \in V(P) \cap H$ and $H^\pm \cap P \neq \emptyset$, where $H^\pm$ denotes either $H^-$ or $H^+$, then there exists a vertex $w \in V(P) \cap H^\pm$ adjacent to $v$, and moreover, the subgraph of the 1-skeleton of $P$ induced by the vertices in $H^\pm$ is connected.
\end{proposition}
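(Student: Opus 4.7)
The plan is to prove both claims with two key ingredients: (i) the tangent cone at a vertex of a polytope is generated by its edge directions, and (ii) a monotone-path argument along the 1-skeleton guided by a linear functional. I would first fix a linear functional $\ell$ with $H = \{x : \ell(x) = c\}$ and, by symmetry (replacing $\ell$ by $-\ell$ if needed), assume $H^\pm = H^+ = \{x : \ell(x) > c\}$, so that $\ell(v) = c$.

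For the first assertion, I would pick any point $p \in P \cap H^+$, so $\ell(p) > c = \ell(v)$. Since $v$ is a vertex of $P$, its tangent cone $T_v P$ is a pointed polyhedral cone equal to the conic hull of the edge directions $\{w_1 - v, \ldots, w_k - v\}$, where $w_1, \ldots, w_k$ are the vertices of $P$ adjacent to $v$. Expressing $p - v = \sum_{i=1}^{k} \mu_i (w_i - v)$ with $\mu_i \geq 0$ and applying $\ell$, since $\ell(p - v) > 0$ at least one term must satisfy $\mu_i > 0$ and $\ell(w_i - v) > 0$, yielding a neighbor $w = w_i \in V(P) \cap H^+$.

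For the connectivity claim, I would iterate the same tangent-cone reasoning: from any vertex $u \in V(P) \cap H^+$ that does not maximize $\ell$ on $P$, there exists an edge-neighbor $u'$ with $\ell(u') > \ell(u)$ (take a point of $P$ with strictly larger $\ell$-value and decompose its difference with $u$ as a nonnegative combination of edge directions at $u$). Iterating produces a strictly $\ell$-increasing edge-path that terminates at a vertex of the face $F_{\max}$ where $\ell$ attains its maximum on $P$; since the path is strictly increasing in $\ell$ and starts above $c$, every vertex of the path lies in $H^+$. Because $H^+ \cap P \neq \emptyset$ forces $\max_P \ell > c$, all vertices of $F_{\max}$ also lie in $H^+$, and the 1-skeleton of $F_{\max}$ is itself connected (as the 1-skeleton of a polytope, with edges that are edges of $P$). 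Combining these facts, every vertex of $V(P) \cap H^+$ is joined by an edge-path in $H^+$ to a vertex of $F_{\max}$, and vertices of $F_{\max}$ are mutually connected in $H^+$, so the induced subgraph on $V(P) \cap H^+$ is connected.

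The main subtlety is invoking the fact that the tangent cone at a vertex of a polytope equals the conic hull of its edge directions; this standard polyhedral fact is what lets both assertions reduce to straightforward sign arguments on nonnegative combinations. No additional obstacle is expected.
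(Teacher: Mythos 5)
Your proof is correct. For the first assertion you use the same key fact as the paper --- that $P$ is contained in $v$ plus the cone generated by the edge directions at $v$ --- only phrased directly (decompose $p-v$ and find a positive term) rather than by contradiction, so that part is essentially identical. For the connectivity claim, however, you take a genuinely different route. The paper forms the auxiliary polytope $P^\pm_x=\operatorname{conv}\bigl((V(P)\cap H^\pm)\cup\{x\}\bigr)$ with $x$ in the relative interior of $P\cap H$, invokes Balinski's theorem to get $r$-connectivity of its graph, and deletes $x$; this is short but leans on the implicit claim that the edges of $P^\pm_x$ joining two vertices of $V(P)\cap H^\pm$ are edges of $P$, which is left unargued. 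Your argument instead iterates the tangent-cone step to build a strictly $\ell$-increasing edge path from any vertex of $V(P)\cap H^+$ to the face $F_{\max}$ where $\ell$ is maximized, notes that such a path never leaves $H^+$ and that $F_{\max}\subset H^+$ with connected $1$-skeleton, and concludes. This is the classical monotone-path argument (the same engine behind Balinski's theorem itself); it is more elementary, stays entirely inside the $1$-skeleton of $P$, and avoids the auxiliary-polytope subtlety, at the cost of being slightly longer. Both proofs are valid for the statement as intended.
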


\begin{proof}
Suppose that no neighbor of $v$ lies in $H^\pm$. Since $v$ is a vertex, $P$ is contained in the cone 
\[ C_v = v + \text{cone}\{v_i - v \mid v_i \text{ is a neighbor of } v\}. \] 
By assumption, all generators $v_i - v$ point into $\mathbb{R}^d \setminus H^\pm$, so $C_v \subset \mathbb{R}^d \setminus H^\pm$. However, any $p \in P \cap H^\pm$ must be reachable through directions entering $H^\pm$, yielding a contradiction. Thus, some neighbor $w$ must lie in $H^\pm$.

% For the connectedness part, let $P^\pm = P \cap \overline{H^\pm}$, where $\overline{H^\pm}$ denotes the closure of $H^\pm$, and consider a point $x$ in the relative interior of $F = P^\pm \cap H$. Construct the polytope
% \[ P^\pm_x = \operatorname{conv}\big((V(P) \cap H^\pm) \cup \{x\}\big). \] 
% Let $r \geq 2$ be the dimension of $P^\pm_x$. Its 1-skeleton is $r$-connected and remains connected when removing $x$. This proves that the subgraph induced by $V(P) \cap H^\pm$ is connected.
For the connectivity part, if there is only one vertex of $P$ in $H^\pm$, its 1-skeleton is trivially connected. Suppose there is more than one vertex of $P$ in $H^\pm$. Consider a point $x$ in the relative interior of $P \cap H$. We construct the polytope
\[
P^\pm_x = \operatorname{conv}\big((V(P) \cap H^\pm) \cup \{x\}\big).
\]
Let $r \geq 2$ be the dimension of $P^\pm_x$. Its 1-skeleton is $r$-connected and remains connected after removing $x$. This proves that the subgraph induced by $V(P) \cap H^\pm$ is connected.

\end{proof}

Let $P$ be a $d$-polytope and $u\in \mathbb{R}^d$ be a unit vector. Let $v_1, v_2, \dots, v_n$ be the vertices of $P$, ordered by direction $u$, i.e., $i<j$ if $u\cdot v_i\leq u\cdot v_j$. If there is an edge between $v_i$ and $v_j$, we denote it $\{v_i, v_j\}$.\\

We now define the \emph{slicing poset} $(P_u, \leq)$, whose elements are pairs representing either vertices or edges of $P$, defined by:
$$P_u=\{(v_i, v_j): i=j, \text{ or $\{v_i, v_j\}$ is an edge such that $u\cdot (v_i-v_j)\neq 0$, with $i<j$ } \}.$$
Here $(v_i,v_i)$ represents the vertex $v_i$, while $(v_i,v_j)$ with $i<j$ represents the edge $\{v_i,v_j\}$.\\

The order on $P_u$ is defined as follows: two elements of $P_u$ are equal if they represent geometrically the same element, otherwise they are different elements. Let $(v_{i_1}, v_{j_1}), (v_{i_2}, v_{j_2})\in P_u$, we say that $(v_{i_1}, v_{j_1}) < (v_{i_2}, v_{j_2})$ if either:
\begin{enumerate}
    \item $j_1 = i_2$; or
    \item $j_1 < i_2$ and there exists a path $v_{j_1} = v_{r_1}, \dots, v_{r_m} = v_{i_2}$ such that $u\cdot {v_{r_s}} < u\cdot v_{r_{s+1}}$ for all $s = 1, \dots, m-1$.
\end{enumerate}

It is straightforward to verify that $(P_u,\leq)$ is indeed a poset, and all maximal and minimal elements are vertices. We show an example of an slicing poset of the octahedron in Figure \ref{fig:partial_order_cube}. 
\begin{figure}[ht]
    \centering
    \input{Figures/partial_order_cube.tikz}
    \caption{The octahedron $O$ with vertices $\{a, b, c, d, e, f\}$ (left), the poset induced by direction $u$ (center), and the poset induced by direction $w$ (right).
}
    \label{fig:partial_order_cube}
\end{figure}

% \begin{lemma}\label{Lemma:MinMax}
%     Let $P$ be a $d$-polytope and $u\in \mathbb{R}^d$ a unit vector. Then there exist a hyperplane that contains all the minimal elements of $(P_u,\leq )$ and another hyperplane containing all the maximal elements. 
% \end{lemma}
% \begin{proof}
%     Let $v_1, v_2, \dots, v_n$ be the vertices of $P$, ordered by the direction $u$ and the $t_i's$ as above. Clearly, $H_1=\{X: Xu=t_1\}$ and $H_n=\{X: Xu=t_n\}$ are the two support hyperplanes for $P$ in the direction $u$. We claim that these two hyperplanes are the desired ones, respectively.
    
%     As we mentioned above, the edges in $P_u$ cannot be minimal or maximal elements, so it suffices to prove that all vertices not in $H_1$ or in $H_n$ cannot be minimal or maximal elements. 
    
%     Let $v_i=(v_i, v_i)\in P_u$ be a vertex not contained in $H_1$ or $H_n$, and let $H_i=\{Xu=t_i\}$, which is a parallel to $H_1$ and $H_n$. Since $v_i\in H_i$, $H_i$ is not $H_1$ or $H_n$ , then $H_i$ is not a support hyperplane.

%     By Proposition \ref{Proposition:Neighbors}, there are vertices $v_{j}\in H_1^-$ and $v_{h}\in H_i^+$ adjacent to $v_i$. Then $j<i<h$, which implies that $$(v_{j},v_{i})<(v_{i},v_{i})<(v_{i},v_{h}).$$
%     \end{proof}

    \begin{proof}[Proof of Theorem \ref{Thm:antichain}]
    Let $P$ be a $d$-polytope and $u\in \mathbb{R}^d$. We show that for each $u$ and any hyperplane $H\in\mathcal{H}_P$ with normal vector $u$, the set of elements of $P_u$ intersected by $H$ forms a maximal antichain in the poset $(P_u, \leq )$ defined above.

    Let $H= \{X: u\cdot X=t\}$ and let $S\subset P_u$ be the set of elements that $H$ intersects. Let $v_1,\dots,v_n$ be the vertices of $P$ ordered by $u$ and denote $t_i:=u\cdot v_i$. We begin by proving that $S$ is an antichain. If $\vert S\vert=1$, we are done. Suppose $\vert S\vert>1$.
    
    Let $(v_{i_1}, v_{j_1}), (v_{i_2}, v_{j_2})\in S$ be two distinct elements. Then $t_{i_1}\leq t \leq t_{j_1}$ and $t_{i_2}\leq t \leq t_{j_2}$ in order to be intersected by $H$ where $t_{i_1}=u\cdot v_{i_1}$, $t_{i_2}=u\cdot v_{i_2}$, $t_{j_1}=u\cdot v_{j_1}$, y $t_{j_2}=u\cdot v_{j_2}$. Observe that it is not possible to have $t_{j_1}<t_{i_2}$ and $t_{j_2}<t_{i_1}$. 
    
    Proceeding by contradiction, suppose without loss of generality that $(v_{i_1}, v_{j_1})<(v_{i_2}, v_{j_2})$. Then $j_1\leq i_2$, which implies $t_{j_1}\leq t_{i_2}$. The only way this can happen is if $j_1=i_2$, by the previous observation.
    
     Since $t=t_{j_1}= t_{i_2}=t$, both $(v_{i_1}, v_{j_1})$ and $(v_{i_2}, v_{j_2} )$ must be vertices, otherwise $H$ would not intersect an edge in its interior. Therefore, $$(v_{i_1}, v_{i_1})=(v_{i_1}, v_{j_1})=(v_{i_2}, v_{j_2})=(v_{i_2}, v_{i_2}),$$ which contradicts the assumption that they are distinct elements.
    
    Now we prove that $S$ is a maximal antichain. Suppose, for contradiction, that $S$ is not maximal. Then there exists $(v_i, v_j)\in P_u\setminus S$ that is incomparable with every element of $S$ (note that $i$ may equal $j)$. Observe that either $t<t_i$ or $t_j<t$  for $t_i=u\cdot v_i$ and $t_j=u\cdot v_j$. Let $H_{i}=\{X: uX=t_i\}$ and $H_{j}=\{X: uX=t_j\}$ be hyperplanes. 

    If $t<t_i$, suppose without loss of generality that $(v_i, v_j)$ is the minimal element in $(P_u,\leq)$ with these conditions. Since $H\subset H_i^-$ and $H\cap P\neq \emptyset$, then $H_{i}^-\cap P\neq \emptyset$. By Proposition \ref{Proposition:Neighbors}, there exists a vertex $v_r\in V(P)\cap H_i^-$ adjacent to $v_i$, which implies $r<i$ because $t_r<t_i$. Therefore, $(v_r,v_i)<(v_i, v_i)\leq (v_i, v_j)$. 

    Since $(v_r,v_i)$ is comparable to some element of $S$, say $(v_{i_1}, v_{j_1})\in S$, we must have $(v_{i_1}, v_{j_1})\leq (v_r,v_i)$, otherwise $t_i<t_{i_1}\leq t$, which leads to a contradiction. Finally, $$(v_{i_1}, v_{j_1})\leq (v_r,v_i)<(v_i, v_i)\leq (v_i, v_j),$$ a contradiction.

    A similar argument applies if $t<t_j$, suppose without loss of generality that $(v_i, v_j)$ is the maximal element in $(P_u,\leq)$ with these conditions. Since $H\subset H_j^+$ and $H\cap P\neq \emptyset$, then $H_j^+\cap P\neq \emptyset$. By Proposition \ref{Proposition:Neighbors}, there exists a vertex $v_r\in V(P)\cap H_j^+$ adjacent to $v_j$, which implies $j<r$ because $t_j<t_r$. Therefore, $(v_i,v_j)\leq(v_j, v_j)< (v_j, v_r)$. 

    Since $(v_j,v_r)$ is comparable to some element of $S$, say $(v_{i_1}, v_{j_1})\in S$, we must have $(v_j,v_r)\leq (v_{i_1}, v_{j_1})$, otherwise $t_j<t_{j_1}\leq t$ leads to a contradiction. Finally, $$(v_i, v_j)\leq (v_j,v_j)\leq (v_j, v_{i_1})\leq (v_{i_1}, v_{j_1}),$$ which is a contradiction.
\end{proof}

\begin{remark}\label{Remark:Hypercube}
     In \cite{o1971hyperplane}, the author order the edges of $Q_d$ as $$E(Q_d)^*=\{(v,w): v,w\in V(Q_d), \text{ and $w$ has more ones than $v$},\}$$ and defines a poset $(E(Q_d)^*, <^* )$ as follows: Let $(v_1,w_1), (v_2, w_2)\in  E(Q_d)^*$, then $(v_1,w_1)<^*(v_2, w_2)$ if $v_2$ has a one in all entries where $w_1$ has a one. We call this poset \emph{O'Neil's poset} in dimension $d$.

     We extend O'Neils's posets to the vertices as follows: Let $$Q_d^*=\{(v,v): v\in V(Q_d) \}\cup E(Q_d)^*, $$ where $(v, v)$ denotes the vertex $v$. Let $(v_1,w_1), (v_2, w_2)\in Q_d^* $ be two different elements, then $(v_1,w_1)<^*(v_2, w_2)$ if $v_2$ has one in all the entries where $w_1$ has a one. Let us call to this poset the \emph{extended O'Neil's poset} in dimension $d$, and we denote it by $(Q_d^*,\leq ^*)$.
\end{remark}

\begin{lemma}\label{Lemma:PlusOne}
    Let $u \in \mathbb{S}^{d-1}$ with positive entries, and let $v_1, v_2, \dots, v_n$ be the vertices of $Q_d$, ordered by the direction $u$. If $v_j$ has one in all entries where $v_i$ has a one, with $i\neq j$, then $j>i$ and $u\cdot v_j>u\cdot v_i$.  
\end{lemma}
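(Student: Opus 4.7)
The plan is a direct computation exploiting that both $v_i$ and $v_j$ are $0/1$ vectors. First I would consider the vector $w := v_j - v_i$. By hypothesis, whenever $v_i$ has a $1$ in some coordinate, $v_j$ also has a $1$ there, so $w$ has a $0$ in that coordinate. In every other coordinate $v_i$ has a $0$, so $w$ has either a $0$ or a $1$ in that coordinate. Thus $w \in \{0,1\}^d$. Moreover $w \neq 0$ because $v_i \neq v_j$ (they are distinct vertices of $Q_d$, as $i \neq j$).

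Next, since $u \in \mathbb{S}^{d-1}$ has strictly positive entries and $w$ is a nonzero $0/1$ vector, the inner product
\[
u \cdot (v_j - v_i) \;=\; \sum_{k \,:\, w_k = 1} u_k
\]
is a nonempty sum of strictly positive numbers, hence $u \cdot v_j > u \cdot v_i$. Finally, by the definition of the ordering $v_1, v_2, \dots, v_n$ by direction $u$, the strict inequality $u \cdot v_j > u \cdot v_i$ forces $j > i$, which completes the argument.

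There is really no obstacle here: the statement is essentially the observation that the coordinatewise order on $\{0,1\}^d$ refines the order induced by any linear functional with positive coefficients. The only care needed is to note explicitly that $v_j \neq v_i$ (guaranteed by $i \neq j$) so that the difference $w$ is a \emph{nonzero} $0/1$ vector, which is what makes the inequality strict and lets us conclude $j > i$ rather than just $j \geq i$.
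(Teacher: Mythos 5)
Your proof is correct and follows essentially the same route as the paper: a direct coordinatewise comparison showing $u\cdot v_j - u\cdot v_i$ is a nonempty sum of strictly positive entries of $u$, hence positive, after which $j>i$ follows from the ordering. Your phrasing via the difference vector $w=v_j-v_i$ is just a repackaging of the paper's observation that $0\leq v_r^i\leq v_r^j$ for all $r$ with strict inequality somewhere.
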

\begin{proof}
    Let $v_i=(v_1^{i},\dots, v_d^{i})$ and $v_j=(v_1^{j},\dots, v_d^{j})$ and $u=(u_1,\dots,u_d)$. By hypothesis, $0<u_r$ and $0\leq v_r^{i}\leq v_r^{j}$ for $r=1,\dots, d$, and there is at least one $r$ where the equality does not hold (because $i\neq j$). Then  \begin{equation*}
        u\cdot v_i=\sum_{r=1}^d u_r v_r^{i}<\sum_{r=1}^d u_r v_r^{j}=u\cdot v_j.
    \end{equation*}

    Finally, $i<j$ because the vertices are ordered by $u$.
\end{proof}

\begin{proposition}\label{Proposition:equivalenceQd}
    For every $u \in \mathbb{S}^{d-1}$ with positive entries and every $d\geq2$. The extended O'Neil's poset in dimension $d$ is equivalent to the slicing poset of the $d-$hypercube $Q_d$ in direction $u$.
\end{proposition}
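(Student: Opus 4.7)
The plan is to prove that the two posets coincide as posets on the same underlying set, exploiting the fact that the positivity of $u$ reduces the $u$-order on $V(Q_d)$ to the coordinatewise order on $\{0,1\}^d$, which is exactly the order governing O'Neil's definition. So I split the argument into (i) agreement of the underlying sets and (ii) agreement of the order relations.

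First I would check that both posets have the same ground set. By construction, both contain the $2^d$ vertex pairs $(v,v)$, so only the edge pairs require attention. Two vertices of $Q_d$ are adjacent exactly when they differ in a single coordinate; since every entry of $u$ is strictly positive, flipping a coordinate from $0$ to $1$ strictly increases $u\cdot v$, so in particular no edge of $Q_d$ is orthogonal to $u$. Applying Lemma \ref{Lemma:PlusOne}, for any edge $\{v_i,v_j\}$ with indices $i<j$ (i.e.\ $u\cdot v_i< u\cdot v_j$) the vertex $v_j$ is obtained from $v_i$ by flipping a unique $0$ into a $1$, which is precisely the convention defining the ordered edge pairs of $E(Q_d)^\ast$. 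Hence the underlying sets coincide.

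Next I would show the two orders agree. For the implication $<^*\Rightarrow <$, assume $(v_1,w_1)<^*(v_2,w_2)$, so that $v_2$ dominates $w_1$ coordinatewise. If $v_2=w_1$ this is exactly case (1) of the slicing order, so suppose $v_2\neq w_1$, list the coordinates $p_1,\dots,p_k$ ($k\geq 1$) where $v_2$ has a $1$ but $w_1$ has a $0$, and define $c_0=w_1,c_1,\dots,c_k=v_2$ by successively flipping these coordinates. Each consecutive pair $(c_s,c_{s+1})$ is an edge of $Q_d$ with $u\cdot c_s<u\cdot c_{s+1}$, which yields the required monotone path, while Lemma \ref{Lemma:PlusOne} guarantees the index inequality $j_1<i_2$. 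Conversely, if $(v_1,w_1)<(v_2,w_2)$ in the slicing poset, then either $w_1=v_2$ (which trivially satisfies the O'Neil condition) or a monotone path $w_1=c_0,\dots,c_m=v_2$ exists; positivity of $u$ forces each step to flip a $0$ into a $1$, so by induction $v_2$ has a $1$ in every position in which $w_1$ has a $1$.

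The only delicate point---and thus the main obstacle, if it can be called that---is the bookkeeping at the ``boundary'' between vertex pairs and edge pairs, i.e.\ when $w_1=v_2$ or $v_1=w_2$, where one must check that the slicing definition (via index equality $j_1=i_2$) aligns with O'Neil's definition (via coordinatewise containment degenerating to equality). This is immediate from Lemma \ref{Lemma:PlusOne}, which says that the linear order on $V(Q_d)$ induced by $u$ is a linear extension of the coordinatewise partial order on $\{0,1\}^d$; everything else is direct translation between monotone paths in the $1$-skeleton of $Q_d$ and coordinatewise containment of $1$'s.
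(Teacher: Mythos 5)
Your proposal is correct and follows essentially the same route as the paper's proof: both establish equality of the underlying sets and then verify the two order relations agree in each direction, with Lemma \ref{Lemma:PlusOne} serving as the bridge between the coordinatewise order and the $u$-order, and with the same construction of a monotone path by flipping the coordinates where $v_2$ exceeds $w_1$ one at a time. No gaps; this matches the paper's argument.
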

\begin{proof}
    Let $u \in \mathbb{S}^{d-1}$ with positive entries, let $v_1\dots v_n$ be the vertices of $Q_d$ ordered by direction $u$, and denote $t_i=u\cdot v_i$. 
    
    We begin by proving that $Q_d^*= (Q_{d})_u$. Let $1\leq i\leq j\leq n$. If $i=j$, then $(v_i,v_j)\in Q_d^*$ if and if $(v_i,v_j)\in (Q_{d})_u$. Let us prove the case $i<j$, which implies $t_i\leq t_j$. 
    
    Let $(v_i,v_j)\in (Q_d)_u$. Then $(v_i,v_j)$ represents the edge $\{v_i, v_j\}$, and since $u\cdot (v_i-v_j)\neq 0$, $t_i<t_j$. We claim that $v_j$ has one in all entries where $v_i$ has one, plus another one in some entry where $v_i$ has a zero. This would imply that $(v_i,v_j)\in Q_d^*$

    Indeed, since $\{v_i, v_j\}$ is an edge, then $v_i$ and $v_j$ must differ only in one entry, with $v_j$ having the extra one. Otherwise, by Lemma \ref{Lemma:PlusOne}, $t_i>t_j$ which implies $i>j$, leading to a contradiction. 

    Now, if $(v_i,v_j)\in Q_d^*$, then $v_j$ has one in all entries where $v_i$ has one, plus another one in some entry where $v_i$ has a zero, which, according to Lemma \ref{Lemma:PlusOne}, is consistent with the fact that $i<j$. Since $\{v_i, v_j\}$ is an edge of $Q_d$, then $(v_i,v_j)\in (Q_d)_u$.
    
   To conclude the proof, we must show that for any two distinct elements $(v_{i_1},v_{j_1}), (v_{i_2}, v_{j_2})\in (Q_d)_u=Q_d^*$, we have $(v_{i_1},v_{j_1})<^*(v_{i_2}, v_{j_2})$ if and only if $(v_{i_1},v_{j_1})<(v_{i_2}, v_{j_2})$. 
   
   Suppose $(v_{i_1},v_{j_1})<^*(v_{i_2}, v_{j_2})$. By definition, $v_{i_2}$ has a one in all components where $v_{j_1}$ has a one. Then, by Lemma \ref{Lemma:PlusOne}, $i_2\geq j_1$. If $i_2=j_1$, we have $(v_{i_1},v_{j_1})<(v_{i_2}, v_{j_2})$ by definition.
    
    If $i_2>j_1$, let $v_{j_1}=v_{r_1}\dots v_{r_m}=v_{i_2}$ be a path where each $v_{r_{s+1}}$ has a one in all components where $v_{r_s}$ has a one, plus an extra one in the entry where $v_{i_2}$ has a one and $v_{j_1}$ a zero, for all $s=1,\dots, m-1$. Then, by Lemma \ref{Lemma:PlusOne}, $t_{r_s}< \dots <t_{r_{s+1}}$, which implies $(v_{i_1},v_{j_1})<(v_{i_2}, v_{j_2})$.

    Conversely, suppose $(v_{i_1},v_{j_1})<(v_{i_2}, v_{j_2})$. We consider two cases. If $j_1=i_2$, then $(v_{i_1},v_{j_1})<^*(v_{i_2}, v_{j_2})$ by definition. If $j_1<i_2$, there is a path $v_{j_1}=v_{r_1}\dots v_{r_m}=v_{i_2}$, with $t_{r_s}< \dots <t_{r_{s+1}}$ for all $s=1,\dots, m-1$. By Lemma \ref{Lemma:PlusOne}, and the fact that $v_{r_s}$ and $v_{r_{s+1}}$ must differ in only in one entry (because $\{v_{r_s}, v_{r_{s+1}}\}$ is an edge), we have that $v_{r_{s+1}}$ has a one in all entries where $v_{r_s}$ has a one. Then, by induction, $v_{i_2}$ has a one in all components where $v_{j_1}$ has a one. Therefore, $(v_{i_1},v_{j_1})<^*(v_{i_2}, v_{j_2})$.
    
\end{proof}

\begin{corollary}\label{Corollary:NonZero}
    For every $u \in \mathbb{S}^{d-1}$ with no zero entries and every $d\geq2$. The extended O'Neil's poset in dimension $d$ is equivalent to the slicing poset of the $d-$hypercube $Q_d$ in direction $u$.
\end{corollary}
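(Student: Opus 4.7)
The plan is to reduce the corollary to Proposition~\ref{Proposition:equivalenceQd} by exploiting a natural symmetry of the hypercube that converts an arbitrary nonzero sign pattern into an all-positive one. Given $u\in\mathbb{S}^{d-1}$ with no zero entries, I would set $u'=(|u_1|,\dots,|u_d|)$, which has positive entries and the same norm as $u$, and introduce the affine involution $\sigma:\mathbb{R}^d\to\mathbb{R}^d$ defined by $\sigma(x)_i=x_i$ if $u_i>0$ and $\sigma(x)_i=1-x_i$ if $u_i<0$. As a composition of reflections through the hyperplanes $\{x_i=1/2\}$, this $\sigma$ restricts to a combinatorial automorphism of $Q_d$: it permutes $V(Q_d)$ and sends edges (pairs of vertices differing in exactly one coordinate) to edges.

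The key identity is
\[
u'\cdot \sigma(v) \;=\; u\cdot v \;+\; C, \qquad C=-\sum_{i:\,u_i<0}u_i,
\]
valid for every $v\in V(Q_d)$; this follows coordinatewise, since the $i$-th term on the left equals $u_iv_i$ when $u_i>0$ and equals $-u_i+u_iv_i$ when $u_i<0$. Because $C$ does not depend on $v$, the identity shows that if $v_1<\dots<v_n$ is the $u$-ordering of the vertices of $Q_d$, then $\sigma(v_1)<\dots<\sigma(v_n)$ is the $u'$-ordering of their images. Hence the bijection $(v_i,v_j)\mapsto(\sigma(v_i),\sigma(v_j))$ preserves the relative indices $i<j$, sends edges to edges, and carries strictly increasing paths in the $u$-ordering to strictly increasing paths in the $u'$-ordering. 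It follows that this bijection is an isomorphism of slicing posets $(Q_d)_u\to(Q_d)_{u'}$, and composing with the equality $(Q_d)_{u'}=Q_d^*$ supplied by Proposition~\ref{Proposition:equivalenceQd} yields the desired equivalence $(Q_d)_u\cong Q_d^*$.

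The only step that demands real verification is this last one --- checking that both clauses in the definition of $\leq$ (namely the ``$j_1=i_2$'' clause and the monotone-path clause) transport cleanly under $\sigma$. This is routine once the affine identity above is in hand, so I do not foresee any substantive obstacle; the proof is essentially a symmetry argument that transfers the positive-entry result onto an arbitrary nonzero sign pattern.
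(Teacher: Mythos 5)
Your proposal is correct and is essentially the paper's own argument: the paper also reduces to Proposition~\ref{Proposition:equivalenceQd} by reflecting through the hyperplanes $\{x_i=\tfrac12\}$ for each negative coordinate of $u$, only doing so one coordinate at a time rather than via a single composed involution. Your explicit affine identity $u'\cdot\sigma(v)=u\cdot v+C$ makes the order-preservation step more transparent than the paper's ``it can be verified'' remark, but the underlying idea is the same.
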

\begin{proof}
    Let $v_1,\dots, v_n$ be the vertices of $Q_d$ ordered by $u$. If all the entries of $u=(u_1,\dots,u_d)$, the proof is complete by Proposition \ref{Proposition:equivalenceQd}. If $u$ has a negative entry, say in entry $i$, define the transformation $T_i:\mathbb{R}^d\rightarrow \mathbb{R}^d$ as $T_i(x_1,\dots, x_i, \dots, x_n)=(x_1,\dots, 1-x_i, \dots, x_n)$. 

    Note that $T_i$ is a reflection about the hyperplane $H_i=\{X=(x_1,\dots, x_d): x_i=\frac{1}{2}\}$. Thus, $T_i(Q_d)=Q_d$ and the direction $u$ under the transformation $T_i$ is the direction $u^i=(u_1,\dots, -u_i, \dots, u_d).$ It can be verified that $((Q_d)_{u^i},\leq)\cong((Q_{d})_u, \leq)$, by relabeling the vertices of $((Q_{d})_u, \leq)$ using $T_i$.

    We may repeat this process a finite number of times until we obtain a direction $u'$ with all positive entries. Then $$((Q_{d})_u, \leq)\cong (Q_d)_{u'},\leq)\cong (Q_d^*, <^*),$$ where the last equivalence follows from Proposition \ref{Proposition:equivalenceQd}.
\end{proof}

\begin{comment}
\begin{remark}\label{Remark:zero}
    If $u$ is a vector with $j$ zero entries, then all maximal antichains of $(Q_{d_u}, \leq)$ have even size. This is because if we project 
    
    and the largest antichains have size $2^j \lceil (d-j)/2 \rceil \binom{d-j}{\lfloor (d-j)/2 \rfloor}$ as shown in \cite{o1971hyperplane}. 
\end{remark}
\end{comment}

\section{Slices of the Hypercube}\label{Section:Hypercube}

Since the $d-$hypercube $Q_d$ has a rich combinatorial structure, we present some values of the function $cv_{Q_d}$, gaps and the VSS for the low dimensional hypercubes. We start by giving some general results that will be helpful to prove the last two main theorems.

\begin{proposition} \label{Pro:ZeroEntries}
    Let $u\in\mathbb{S}^{d-1}$  with $i$ entries equal to zero. Denote $Q^1_{d-i}\subset Q_d$ the hypercube whose vertices have zero in all the entries where $u$ does. Let $H\in \mathcal{H}_{Q_d}^d$, with normal $u$. Then $cv_{Q_d}(H)=2^{i}\cdot|V(H \cap Q^1_{d-i})| $, $|H^- \cap V(Q_d)|=2^{i}\cdot|H^- \cap V(Q^1_{d-i})|$, and $|H^+ \cap V(Q_d)|=2^{i}\cdot|H^+ \cap V(Q^1_{d-i})|$.
\end{proposition}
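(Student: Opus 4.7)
The plan is to exploit the fact that when $u$ has zero entries, the dot product $u \cdot x$ depends only on the $d-i$ coordinates where $u$ is nonzero. This means that flipping the value of $x$ in any of the $i$ "ignored" coordinates does not change $u \cdot x$, which yields a natural $2^i$-to-$1$ fibration of $V(Q_d)$ over $V(Q^1_{d-i})$ preserving the side of $H$ on which each vertex lies.

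More precisely, I would first fix notation: let $Z \subseteq [d]$ be the set of $i$ positions where $u$ vanishes, so $Q^1_{d-i} \subseteq Q_d$ consists of the vertices with a zero in every position of $Z$. For each $v \in V(Q^1_{d-i})$, define the \emph{fiber} over $v$ to be the set $F(v) \subseteq V(Q_d)$ obtained from $v$ by replacing the $Z$-coordinates with all $2^i$ possible $\{0,1\}^Z$ strings. Then $\{F(v)\}_{v \in V(Q^1_{d-i})}$ partitions $V(Q_d)$ into $2^{d-i}$ blocks of size $2^i$, and since $u$ is zero on $Z$, every $w \in F(v)$ satisfies $u \cdot w = u \cdot v$. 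Consequently, each fiber is contained entirely in one of $H^-$, $H$, or $H^+$, and the two claimed half-space equalities $|H^\pm \cap V(Q_d)| = 2^i \cdot |H^\pm \cap V(Q^1_{d-i})|$ follow immediately by summing over fibers.

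For the vertex-count equality $cv_{Q_d}(H) = 2^i \cdot |V(H \cap Q^1_{d-i})|$, I would split the vertices of the slice $H \cap Q_d$ into the two types described in the introduction: (i) vertices of $Q_d$ that lie on $H$, and (ii) interior crossings of edges of $Q_d$ by $H$. The type-(i) count is $2^i \cdot |V(Q^1_{d-i}) \cap H|$ directly from the fiber argument. For type (ii), I would classify the edges of $Q_d$ by whether the unique coordinate in which their endpoints differ belongs to $Z$ or not. Edges varying in a coordinate of $Z$ have both endpoints in the same fiber, hence equal $u$-value; they are parallel to $H$ and contribute no interior crossing. Edges varying in a coordinate outside $Z$ naturally organize into $2^i$ parallel copies: fixing any lift to a string on $Z$, every edge $\{a,b\}$ of $Q^1_{d-i}$ of this type lifts to an edge $\{a',b'\}$ of $Q_d$ where $a' \in F(a)$, $b' \in F(b)$, and $u \cdot a' = u \cdot a$, $u \cdot b' = u \cdot b$. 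Therefore $H$ cuts the interior of the lifted edge if and only if it cuts the interior of the original edge in $Q^1_{d-i}$, and summing over the $2^i$ lifts gives the desired $2^i$ factor.

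The main thing to be careful about is the edge correspondence: I need to verify that the lifts of an edge of $Q^1_{d-i}$ are indeed edges of $Q_d$ (which holds because the lifted endpoints still differ in exactly one coordinate, namely the same coordinate of $[d] \setminus Z$), and that no edges of $Q_d$ of the nonzero-direction type are missed (every such edge has endpoints that agree on $Z$ and differ on exactly one position outside $Z$, hence projects to a unique edge of $Q^1_{d-i}$). Combining the type-(i) and type-(ii) counts gives $cv_{Q_d}(H) = 2^i \cdot (|V(Q^1_{d-i}) \cap H| + \#\{\text{edges of } Q^1_{d-i} \text{ crossed by } H\}) = 2^i \cdot |V(H \cap Q^1_{d-i})|$, completing the proof.
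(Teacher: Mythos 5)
Your proof is correct and follows essentially the same route as the paper: both arguments rest on the product decomposition of $Q_d$ induced by the zero coordinates of $u$, observing that $u\cdot x$ is unchanged along each fiber so that vertices, half-space memberships, and edge crossings are all replicated exactly $2^i$ times. Your treatment of the edge correspondence (separating the $Z$-direction edges, which are parallel to $H$ and contribute nothing, from the $2^i$ lifts of each edge of $Q^1_{d-i}$) is in fact slightly more explicit than the paper's, which states the analogous facts as a short list and concludes directly.
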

\begin{proof}
    Let $I$ be the set of indices where $u$ has zero entries, so $\vert I \vert=i$. Fix a particular combination of zeros and ones in the coordinates indexed by $I$. Each of these combinations defines a $(d-i)$-hypercube contained in $Q_d$. There are $2^{i}$ of such combinations, giving rise to $2^{i}$ pairwise disjoint hypercubes $Q_{d-i}^1,\dots, Q_{d-i}^{2^{i}}$ such that:
     $$V(Q_d)=\bigcup_{j=1}^{2^{i}} V(Q_{d-i}^j).$$

    Now, take the following partition of $V(Q_d)$: for each vertex $v\in V(Q_{d-i}^1)$, let
    $$V_{v}=\{w\in V(Q_d): \text{the entry $j$ of $w$ and $v$ have the same value, for each $j\notin I$}\}.$$
    Then $$V(Q_d)=\bigcup _{v\in V(Q_{d-i}^1)}\in V_{v}$$
    
    The following are straightforward implications. Some of them need the fact that $H=\{X: X\cdot u=t\}$, for some $t\in \mathbb{R}$.

    \begin{enumerate}
        \item For each $v\in V(Q_{d-i}^1)$, $V_{v}$ has exactly one vertex of each $Q_{d-i}^j$, for $j=1,\dots, 2^{i}$.
        \item If $v\in V(Q_{d-i}^1)\cap H$, then $w\in H$, for all $w\in V_{u}$. 
        \item If $v\in V(Q_{d-i}^1)\cap H^*$, then $w\in H^*$, for $*=\pm$ and all $w\in V_{u}$.
        \item If $\{v_1, v_2\}$ is an edge of $Q^{1}_{d-i}$. Then, for all $j=1,\dots, 2^{i}$, $\{w^j_1, w^j_2\}$ is an edge, with $w^j_1= V(Q^{j}_{d-i})\cap V_{w^j_1}$ and $w_2= V(Q^{j}_{d-i})\cap V_{v_2}$. Furthermore, by the clause before, if $H$ intersects $\{v_1, v_2\}$, then $H$ intersects $\{w^j_1, w^j_2\}$.
    \end{enumerate}
    These facts imply that every intersection of H with $Q_d$ corresponds to exactly $2^{i}$ equivalent intersections with the hypercubes $Q^{j}_{d-i}$. This proves the proposition.
    
\end{proof}

\begin{proposition}\label{prop:even_cut}
Let $d$ be an even integer. Then, for all $H\in\mathcal{H}_{Q_d}$ that contain no vertices of $Q_d$, $cv_{Q_d}(H)$ is even.
\end{proposition}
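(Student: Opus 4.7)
The plan is to observe that the vertex count of the slice reduces, under the hypothesis that $H$ avoids vertices of $Q_d$, to a count of cut edges of the $1$-skeleton of $Q_d$, and then to use a handshake/parity argument that exploits the even degree of every vertex when $d$ is even.

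More concretely, since $H$ contains no vertex of $Q_d$, every vertex lies strictly in one of the open half-spaces $H^+$ or $H^-$. Set $V^+ = V(Q_d) \cap H^+$ and $V^- = V(Q_d) \cap H^-$, giving a partition of $V(Q_d)$. From the characterization of slice vertices recalled in the introduction, a point is a vertex of $H \cap Q_d$ if and only if it is the interior intersection of $H$ with an edge of $Q_d$. Such an edge is precisely one whose two endpoints lie on opposite sides of $H$, i.e., one connecting $V^+$ to $V^-$. Hence $cv_{Q_d}(H)$ equals $|\partial(V^+)|$, the size of the edge cut determined by the partition $(V^+, V^-)$ of the $1$-skeleton of $Q_d$.

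Now I would invoke the standard handshake identity applied to the induced subgraph on $V^+$: every vertex of $Q_d$ has degree $d$ in the $1$-skeleton, so
\[
d \cdot |V^+| \;=\; \sum_{v \in V^+} \deg_{Q_d}(v) \;=\; 2\,|E(V^+)| \;+\; |\partial(V^+)|,
\]
where $E(V^+)$ is the set of edges of $Q_d$ with both endpoints in $V^+$. Rearranging gives
\[
cv_{Q_d}(H) \;=\; |\partial(V^+)| \;=\; d \cdot |V^+| \,-\, 2\,|E(V^+)|.
\]
Since $d$ is even, the right-hand side is manifestly even, which is the desired conclusion.

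There is no real obstacle here: the argument is a direct parity computation, and the only subtlety is noting that the hypothesis ``$H$ contains no vertex of $Q_d$'' is exactly what lets us identify $cv_{Q_d}(H)$ with an edge-cut size, so that the handshake identity applies cleanly.
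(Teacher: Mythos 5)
Your proof is correct, and it takes a genuinely different (and more elementary) route than the paper. Both arguments begin the same way: since $H$ avoids the vertices of $Q_d$, the slice vertices are in bijection with the edges of the cut $(V^+,V^-)$, so the claim reduces to showing this cut has even size. From there the paper invokes two classical facts: a connected graph with all vertex degrees even has an Eulerian circuit, and any circuit meets any cutset in an even number of edges; applying this to an Eulerian circuit of the $1$-skeleton (which uses every edge exactly once) forces the cut to be even. You instead apply the handshake identity to the subgraph induced on $V^+$,
\[
d\cdot|V^+| = 2\,|E(V^+)| + |\partial(V^+)|,
\]
and read off the parity directly from the evenness of $d$. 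Both proofs ultimately rest on the same structural fact---every vertex of $Q_d$ has even degree $d$---and both actually prove the more general statement that in a graph with all degrees even, every edge cut is even. The advantage of your version is that it is self-contained: it needs only a one-line counting identity rather than the existence of Eulerian circuits and the circuit--cutset orthogonality relation. The one point worth making explicit (which you gesture at correctly) is that distinct crossing edges yield distinct slice vertices, since two edges of a polytope can only meet at a common endpoint, which cannot be an interior intersection point of either; this justifies the identification $cv_{Q_d}(H)=|\partial(V^+)|$.
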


\begin{proof}

We recall some basic definitions in graph theory (see \cite{bondy2008graph} for more details). A \emph{circuit} is a sequence of adjacent vertices that starts and ends at the same vertex. Circuits never repeat edges, but they may repeat vertices in the sequence. 

Let $G=(V,E)$ be a connected graph. We say that $G$ contains an \emph{Eulerian circuit} if there exists a circuit in $G$ that contains all the edges of $G$. Additionally, let $V=V_1\cup V_2$ a partition of the vertices of $G$. The cutset $(V_1,V_2)$ of $G$ is the set of edges of $G$ that have one end points in $V_1$ and the other in $V_2$.

Two classical results. Let $G$ be a connected simple graph, then

\begin{enumerate}
    \item $G$ has a Eulerian circuit if and only all its vertices have even degree.
    \item Any circuit and any cutset of $G$ have an even number of common edges.
\end{enumerate}

 We proceed to the proof. Let $H\in\mathcal{H}_{Q_d}$ that does not contain any vertices of $Q_d$. This implies that $H$ is not tangent to $Q_d$, then each of the half-spaces, $H^+$ and $H^-$, contain vertices of $Q_d$. Let $V_1=V(Q_d)\cap H^-$ and $V_2=V(Q_d)\cap H^+$. Note that $H$ only intersect edges that are in the cutset $(V_1,V_2)$.

 Since every vertex of the $1$-skeleton of $Q_d$ have even degree $d$, the $1$-skeleton has an Eulerian circuit $C$.

 By the above classic result, $C$ and $(V_1,V_2)$ have an even number of common edges. Therefore, $\vert (V_1,V_2)\vert$ is even, because $C$ contains all the edges of the $1$-skeleton of $Q_d$.
\end{proof}

\begin{proposition}\label{Proposition:2k}
 Let $H'\in\mathcal{H}_{Q_d}$ be a hyperplane slicing $Q_d$ into a polytope with $k$ vertices. Then, there exist a hyperplane $H\in\mathcal{H}_{Q_{d+1}}$ that slices $Q_{d+1}$ into a polytope with $2k$ vertices.   
\end{proposition}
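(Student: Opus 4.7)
The plan is to produce $H$ explicitly by embedding $H'$ into $\mathbb{R}^{d+1}$ with a zero coefficient in the new coordinate direction, and then invoke Proposition~\ref{Pro:ZeroEntries} as a black box. Concretely, write $H' = \{X \in \mathbb{R}^d : u\cdot X = t\}$ for some nonzero $u \in \mathbb{R}^d$ and some $t \in \mathbb{R}$, and define
\[
H \;=\; \{(X, x_{d+1}) \in \mathbb{R}^{d+1} : u \cdot X = t\}.
\]
Its normal vector, after normalization, lies in $\mathbb{S}^{d}$ and has exactly one zero entry (the $(d{+}1)$-st coordinate), so Proposition~\ref{Pro:ZeroEntries} applies with $i = 1$ on $Q_{d+1}$.

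Next I would identify the relevant sub-hypercube. The $d$-dimensional sub-hypercube $Q_d^{1} \subset Q_{d+1}$ singled out by the proposition, namely the face of $Q_{d+1}$ whose last coordinate is fixed to $0$, is combinatorially and geometrically a copy of $Q_d$. Under this identification, the intersection $H \cap Q_d^{1}$ coincides with $H' \cap Q_d$, because $H$ imposes no condition on $x_{d+1}$, so $|V(H \cap Q_d^{1})| = k$. Thus Proposition~\ref{Pro:ZeroEntries} gives $cv_{Q_{d+1}}(H) = 2^{1}\cdot k = 2k$, completing the argument.

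The only minor verification is that $H$ lies in $\mathcal{H}_{Q_{d+1}}$; this is immediate since $H \cap Q_{d+1} \supseteq (H' \cap Q_d)\times\{0\}$, which is nonempty whenever $H' \cap Q_d$ is nonempty. There is essentially no obstacle: the statement follows directly from Proposition~\ref{Pro:ZeroEntries} once one recognizes that $Q_{d+1}$ can be decomposed as the prism $Q_d \times [0,1]$ and that a hyperplane of the form $\{u\cdot X = t\}$ pulled back along the projection slices each horizontal copy of $Q_d$ identically. In effect, the slice $H\cap Q_{d+1}$ is the prism $(H' \cap Q_d)\times[0,1]$, whose vertices are precisely the doubled images of the $k$ vertices of $H' \cap Q_d$, one copy on each of the faces $x_{d+1}=0$ and $x_{d+1}=1$.
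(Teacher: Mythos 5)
Your construction is the same hyperplane the paper uses --- the vertical cylinder $H = H'\times\mathbb{R}$ over $H'$ --- so the heart of the argument coincides; only the verification of the doubling differs. The paper checks via an explicit determinant sign computation that $(v,0)$ and $(v,1)$ always lie in the same region determined by $H$, and then counts crossed edges and contained vertices by hand, whereas you outsource the count to Proposition~\ref{Pro:ZeroEntries} and, in your closing remark, to the cleaner observation that $H\cap Q_{d+1}=(H'\cap Q_d)\times[0,1]$ is a prism, whose vertex set is $V(H'\cap Q_d)\times\{0,1\}$. One small imprecision: your claim that the normal of $H$ ``has exactly one zero entry'' presupposes that the normal $u$ of $H'$ has no zero entries; if $u$ itself has $j>0$ zeros, then $(u,0)$ has $j+1$ zeros and Proposition~\ref{Pro:ZeroEntries} must be applied with $i=j+1$ (to the $(d-j)$-dimensional sub-hypercube), after which the factor of $2$ is recovered by comparing the two applications of that proposition to $Q_d$ and $Q_{d+1}$. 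This is harmless, since your final prism argument is self-contained, covers all cases, and by itself constitutes a complete and arguably simpler proof than the paper's determinant computation.
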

    
\begin{proof}
A hyperplane \(H\) divides \(\mathbb{R}^d\) into three convex regions: two open half-spaces, \(H^+\) and \(H^-\), and \(H\) itself. Given a set of points \(\{p_1, \ldots, p_{d}\}\) that generate \(H\), consider the following function:

\[
\phi(p_1, \ldots, p_{d}, v) = \text{sign}\left(\det\begin{bmatrix} (p_1, 1) \\ \vdots \\ (p_{d}, 1) \\ (v, 1) \end{bmatrix}\right).
\]

This function determines the region where a point \(v \in \mathbb{R}^d\) is contained based on its sign: if it is positive, then \(v \in H^+\); if it is negative, then \(v \in H^-\); and \(v \in H\) otherwise.

Now, let \(H' \subset \mathbb{R}^d\) be a hyperplane slicing \(Q_d\), and let \(\{p_1, \ldots, p_{d}\} \subset \mathbb{R}^{d}\) be a set of points that generate \(H'\). Consider the hyperplane \(H \subset \mathbb{R}^{d+1}\) generated by the points \(\{(p_1,0), \ldots, (p_{d},0)\} \cup \{(p_1,t)\}\) for some \(t \neq 0 \in \mathbb{R}\). For any \(v \in V(Q_{d})\), the vertices \((v,1)\) and \((v,0)\) of \(V(Q_{d+1})\) satisfy:

\begin{align*}
   \phi((p_1,0), \ldots, (p_{d},0), (p_1,t), (v,0)) &= \text{sign}\left(\det\begin{bmatrix} (p_1,0, 1) \\ \vdots \\ (p_{d},0, 1) \\ (p_1,t,1)\\(v,0, 1) \end{bmatrix}\right) \\
   &= \text{sign}\left( t \det\begin{bmatrix} (p_1, 1) \\ \vdots \\ (p_{d}, 1)\\(v, 1) \end{bmatrix}\right) \\
   &= \text{sign}\left(\det\begin{bmatrix} (p_1,0, 1) \\ \vdots \\ (p_{d},0, 1) \\ (p_1,t,1)\\(v,1, 1) \end{bmatrix}\right) \\
   &= \phi((p_1,0), \ldots, (p_{d},0), (p_1,t), (v,1)).
\end{align*}

This implies that \((v,0)\) and \((v,1)\) are contained in the same region defined by \(H\). Therefore, for every edge crossing \(H'\) in \(Q_d\), there are two edges crossing \(H\) in \(Q_{d+1}\). Similarly, for every vertex of \(Q_d\) contained in \(H'\), there are two vertices of \(Q_{d+1}\) contained in \(H\).
\end{proof}

\subsection{Gaps of the hypercubes}

In this subsection, we prove Theorem \ref{Thm:FirstGAPS} and Theorem \ref{Thm:gap_penultimo}.

\begin{comment}

The sums of Hamming weights of sets of binary numbers are essential tools for analyzing codes and combinatorial structures according to the information theory literature; see \cite{hamming1950error}. Beyond information theory, these sums have applications in graph partitioning, combinatorial optimization, and circuit design \cite{macwilliams1977theory}, \cite{stolarsky1977power}. 

In 1975, Hart introduced a function $g_d(k)$ based on sums of Hamming weights, which, given the $d$-dimensional hypercube $Q_d$ and a set $S$ of $k$ vertices, computes the number of edges with one endpoint in $S$ and the other in $V(Q_d) \setminus S$ (see \cite{hart1976note}).

\begin{theorem}[Hart, \cite{hart1976note}] 
    Let $S$ be a subset of vertices of the $d$-dimensional hypercube $Q_d$ with $1 < k \leq 2^d$ elements. The minimum number of edges with one endpoint in $S$ and the other in $V(Q_d) \setminus S$ is given by 
    \[
    g_d(k) = d \cdot k - 2 \sum_{i=1}^{k-1} h(i),
    \]
    where $h(i)$ denotes the Hamming weight, i.e., the number of ones in the binary representation of $i$.
    Furthermore, for integers $r \leq d$ and $2^r < k \leq 2^{r+1}$, we have 
    \[
    g_d(k) = g_d(2^r) + g_{d-2}(k - 2^r) \geq g_d(2^r).
    \]
\end{theorem}

In the context of slices, this function computes the minimum number of vertices in a slice where the hyperplane intersects only along edges, leaving exactly $k$ vertices of the hypercube in one half-space and the remaining vertices $2^d-k$ in the other half-space.
\end{comment}

 Let $v=(x_1,\dots, x_d)$ be a vertex of the $d$-dimensional hypercube $Q_d$. Then $a(v)=(y_1,\dots, y_d)$, with $y_i=x_i+1\mod 2$, is called the \emph{antipodal} vertex of $v$. Observe that there are $2^{d-1}$ pairs of antipodal vertices and these pairs induce a partition on $V(Q_d)$.

Let $P$ be a $d$-polytope and $v \in V(P)$. The set $N(v) \subset V(P)$ of \emph{neighbors} of $v$ consists of all vertices adjacent to $v$ in the 1-skeleton of $P$. Note that if $P=Q_d$, then for every $v_1, v_2\in V(Q_d)$, the size of the intersection $\vert N(v_1)\cap N(v_2)\vert$ is either zero or two. 

\begin{lemma}\label{Lemma:NoAntipodal}
    Let $Q_d$ be the $d$-dimensional hypercube, and suppose $\vert H^-\cap V(Q_d)\vert \leq \vert H^+\cap V(Q_d)\vert$, for $H\in \mathcal{H}_{Q_d}$. Then, for every $v\in H^-\cap V(Q_d)$, $a(v)\in H^+\cap V(Q_d)$.
\end{lemma}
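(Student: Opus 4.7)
My plan is to exploit the central symmetry of the hypercube about the center $c = (\tfrac{1}{2}, \ldots, \tfrac{1}{2})$. Writing $H = \{x \in \mathbb{R}^d : u \cdot x = t\}$ and setting $s = u \cdot \mathbf{1}$ with $\mathbf{1} = (1, \ldots, 1)$, the key identity is $u \cdot a(v) = u \cdot (\mathbf{1} - v) = s - u \cdot v$. In particular, the antipodal image of $H$ is the parallel hyperplane $H^{*} = \{x : u \cdot x = s - t\}$, and the antipodal map sends $H^{-} \cap V(Q_d)$ bijectively onto $(H^{*})^{+} \cap V(Q_d)$.

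The argument then splits into two cases according to the position of the center $c$ relative to $H$. First I would dispose of the easy case $t \leq s/2$ (i.e., $c \in \overline{H^{+}}$): since $s - t \geq t$, the half-space inclusion $(H^{*})^{+} \subseteq H^{+}$ holds as subsets of $\mathbb{R}^d$, so $a(v) \in H^{+}$ for every $v \in H^{-}$ automatically, and the cardinality hypothesis is not even needed.

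The interesting case is $t > s/2$, where now $(H^{*})^{+} \supsetneq H^{+}$ in $\mathbb{R}^d$, so a priori some $v \in H^{-} \cap V(Q_d)$ might satisfy $s - t < u \cdot v < t$, which would place $a(v)$ back in $H^{-}$ rather than in $H^{+}$. This is where the hypothesis enters: combining the inclusion $H^{+} \cap V(Q_d) \subseteq (H^{*})^{+} \cap V(Q_d)$ with the antipodal bijection yields $|H^{+} \cap V(Q_d)| \leq |H^{-} \cap V(Q_d)|$, which together with the assumed $|H^{-} \cap V(Q_d)| \leq |H^{+} \cap V(Q_d)|$ forces equality of cardinalities. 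Since the two finite sets $H^{+} \cap V(Q_d) \subseteq (H^{*})^{+} \cap V(Q_d)$ then have the same size, they coincide, and thus $a(H^{-} \cap V(Q_d)) = (H^{*})^{+} \cap V(Q_d) = H^{+} \cap V(Q_d)$, which is exactly what the lemma requires.

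The only genuine obstacle is this second case: the naive geometric argument fails precisely when $H$ lies strictly past the center, and one must use the cardinality constraint to collapse a strict half-space inclusion into an equality at the level of vertex sets. Everything else is bookkeeping with the linear form $u \cdot x$ and the involution $v \mapsto \mathbf{1} - v$.
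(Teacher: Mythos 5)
Your proof is correct and follows essentially the same strategy as the paper's: the easy case where the center $(\tfrac12,\dots,\tfrac12)$ lies in $H\cup H^{+}$ is handled by a direct containment, and the case where it lies in $H^{-}$ is handled by playing the antipodal involution against the cardinality hypothesis to force equality of the two vertex counts (and emptiness of $H\cap V(Q_d)$). The only difference is cosmetic bookkeeping: you phrase the involution via the reflected hyperplane $H^{*}$ and the linear form $u\cdot x$, while the paper argues via the midpoint lying on each segment $v\,a(v)$.
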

\begin{proof}
    Let $m=(\frac{1}{2},\dots,\frac{1}{2})$, which is the midpoint of the segments defined by any pair of antipodal points. Observe the following cases:
    
    \textbf{Case 1:} If $m\in H$, then clearly $v\in H^-\cap V(Q_d)$ if and only if $a(v)\in H^+\cap V(Q_d)$.
    
    \textbf{Case 2:} If $m\in H^+$, then for all $v\in H^-\cap V(Q_d)$, $a(v)\in H^+\cap V(Q_d)$, in order for $m$ to be contained in the segment $va(v)$.
    
    \textbf{Case 3:} If $m\in H^-$, for every $v\in V(Q_d)\cap (H\cup H^+)$, $a(v)$ must be in $H^-$ in order for $m$ to be contained in the segment $va(v)$. This implies $\vert H^-\cap V(Q_d)\vert \geq \vert (H\cup H^+)\cap V(Q_d)\vert$. This is only possible if $\vert H^-\cap V(Q_d)\vert=\vert H^+\cap V(Q_d)\vert$ and $\vert H\cap V(Q_d)\vert=0$. Therefore, $v\in H^-\cap V(Q_d)$ if and only if $a(v)\in H^+\cap V(Q_d)$.
    
    In all cases, the assertion follows.
\end{proof}

\begin{lemma}\label{Lemma:HypercubeFirstCuts}
Let $Q_d$ the $d$-dimensional hypercube, and $H\in \mathcal{H}_{Q_d}$, with $H^-$ being the open half-space with fewer vertices of $Q_d$. Then, the following table holds.

\begin{table}[ht]
    \centering
    \begin{tabular}{|c|c|}
    \hline
         Value of $\vert H^-\cap V(Q_d)\vert$ &  Possible values of $cv_{Q_d}(H)$\\
         \hline
         0 & $2^r$, with $0\leq r\leq d-1$\\
         \hline
         1 & $d$\\
         \hline
         2 & $2d-2$\\
         \hline
         3 & $3d-5$ and $3d-4$\\
         \hline
         4 & $4d-i$, for $i=6, 7, 8, 9.$\\
        \hline
    \end{tabular}
    \caption{Some values of $cv_{Q_d}(H)$}
    \label{tab:my_label}
\end{table}
\end{lemma}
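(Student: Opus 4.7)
The plan is a case analysis on $k = |H^- \cap V(Q_d)|$, powered by two reductions. First, Proposition \ref{Pro:ZeroEntries} handles the case when the unit normal $u$ of $H$ has $i \geq 1$ zero entries: the problem scales down to $Q_{d-i}$ with reduced slice size $k/2^i$, and $cv_{Q_d}(H) = 2^i \cdot cv_{Q_{d-i}}(H_{\mathrm{red}})$. Second, reflections of $Q_d$ through the symmetry planes $\{x_j = 1/2\}$ allow me to assume, without loss of generality, that the nonzero entries of $u$ are all positive. For the remaining ``generic'' case in which $u$ is strictly positive, $H^- \cap V(Q_d)$ is automatically a down-set in the coordinate Boolean lattice on $\{0,1\}^d$, since $v \leq w$ coordinatewise implies $u\cdot v \leq u\cdot w$.

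In the generic case, the down-sets of size at most $4$ are quickly enumerated: size $1$ is $\{0\}$; size $2$ is $\{0, e_i\}$; size $3$ is $\{0, e_i, e_j\}$; size $4$ is either Type~$1$, $\{0, e_i, e_j, e_k\}$, or Type~$2$, $\{0, e_i, e_j, e_i+e_j\}$. For each, I will use the accounting
\[
cv_{Q_d}(H) \;=\; |V_0| \;+\; (kd - 2 e_-) \;-\; e_0,
\]
where $V_0 = V(Q_d) \cap H$, $e_-$ counts edges of $Q_d$ internal to $H^-$, and $e_0$ counts edges from $H^-$ to $V_0$. By Proposition \ref{Proposition:Neighbors}, every vertex of $V_0$ is adjacent to some vertex in $H^-$; and since in $Q_d$ two vertices at Hamming distance $2$ share exactly two common neighbors while vertices at any other distance share none, the excess $e_0 - |V_0|$ counts exactly the pair-common-neighbors of $H^-$-vertices that happen to lie on $H$.

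Applying this case by case: $|H^-|=0$ makes $H$ supporting, so $H \cap Q_d$ is a proper face, i.e., an $r$-subcube with $2^r$ vertices, $0 \leq r \leq d-1$. For $|H^-| = 1$ the down-set $\{0\}$ has no relevant pair-common-neighbor, so $cv = d$. For $|H^-| = 2$ the only down-set $\{0, e_i\}$ has adjacent vertices with no common neighbors, giving $cv = 2d-2$; the reduction with $i = 1$ also yields $2(d-1) = 2d-2$. For $|H^-| = 3$ the pair $\{e_i, e_j\}$ has exactly one non-$H^-$ common neighbor $e_i + e_j$, which lies on $H$ iff $u_i + u_j = t$, giving $cv = 3d-4$ or $3d-5$. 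For $|H^-| = 4$, Type~$2$ has all its pair-common-neighbors inside $H^-$, yielding $cv = 4d-8$; Type~$1$ has three independent potential coincidences $e_i+e_j$, $e_i+e_k$, $e_j+e_k$, each reducing $cv$ by $1$ when on $H$, producing the full range $\{4d-6,\, 4d-7,\, 4d-8,\, 4d-9\}$; and the reductions $i \in \{1,2\}$ both yield $4d-8$.

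The main obstacle is to rule out ``exotic'' non-down-set configurations of $H^- \cap V(Q_d)$ (for instance, a 3-element $H^-$ consisting of three pairwise non-adjacent vertices). The key sub-lemma is an averaging identity on $2$-faces: if $v_1, v_2 \in V(Q_d) \cap H^-$ are non-adjacent, so that they diagonally span a $2$-face whose other two vertices are $w_1, w_2$, then $u(v_1) + u(v_2) = u(w_1) + u(w_2) < 2t$, forcing at least one of $w_1, w_2$ into $H^-$. Iterating this argument over the $k$-subcubes spanned by non-adjacent pairs in $H^-$ shows that only the down-set structures listed above are compatible with $|H^-| \leq 4$, which is what rigidifies the classification and yields the finite list of $cv$-values in each row of the table.
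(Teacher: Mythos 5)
Your proof is correct, and it takes a genuinely different route from the paper's. The paper never normalizes the normal vector: it classifies the possible induced subgraphs on $H^-\cap V(Q_d)$ directly, using Proposition \ref{Proposition:Neighbors} (connectedness of the induced subgraph on each open side) together with the absence of triangles and odd cycles in $Q_d$, which for $|H^-\cap V(Q_d)|=4$ yields three candidate configurations (a path $P_4$, a claw $K_{1,3}$, and a $4$-cycle) and then counts crossing edges and endpoint coincidences exactly as you do. Your reduction --- kill zero entries of $u$ via Proposition \ref{Pro:ZeroEntries}, make the remaining entries positive by reflections, and then observe that $H^-\cap V(Q_d)$ is a down-set in the Boolean lattice --- buys a strictly sharper classification: the only size-$4$ down-sets are the claw $\{0,e_i,e_j,e_k\}$ and the square $\{0,e_i,e_j,e_i+e_j\}$, so the $P_4$ configuration that the paper carries as its case (a) is in fact unrealizable (one checks directly that $\{0,e_1,e_1+e_2,e_1+e_2+e_3\}\subset H^-$ with $e_2,e_1+e_3\notin H^-$ forces $u_2<0$ and $u_2\geq t>0$ simultaneously). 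Since the $P_4$ values are a subset of the claw's, the final table is unaffected, but your argument is the more rigid one and ties in nicely with the poset machinery of Section \ref{sec:Posets}. Two small points to tighten: your statement that the excess $e_0-|V_0|$ counts pair-common-neighbors on $H$ implicitly uses that no vertex outside $H^-$ has three or more neighbors inside $H^-$, which is true for these down-sets but should be said; and your closing ``averaging on $2$-faces'' sub-lemma is redundant once positivity of $u$ is in place (the down-set property already excludes all exotic configurations), and as phrased it only applies to pairs at Hamming distance exactly $2$, not to arbitrary non-adjacent pairs.
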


\begin{proof}

For $\vert H^-\cap V(Q_d)\vert=0$, we have that $H$ is a support hyperplane, so it is well known that $H\cap Q_d$
is a hypercube $Q_r$, with $0\leq r\leq d-1$, then $\vert V(H\cap Q_d)\vert=2^r$. 

By Proposition \ref{Proposition:Neighbors}, when $\vert H^-\cap V(Q_d)\vert=1, 2, 3$, the only configurations of vertices and edges in $H^-$ are the ones in Figure \ref{Fig:CubeMenos4}. For $\vert H^-\cap V(Q_d)\vert=4$ we may have the three cases that are shown in Figure \ref{Fig:Cube4}. The key idea in all the cases is to count the number of edges that are between $H^-$ and $H\cup H^+$, and how many of them might count only as one vertex in case $H$ only intersect them in their final points. These would be the possible values of $cv_{Q_d}(H)$ in each case.

When $\vert H^-\cap V(Q_d)\vert=1$, let $v\in H^-\cap V(Q_d)$ as in Figure \ref{Fig:CubeMenos4} a). Each of the $d$ edges adjacent to $v$ must intersect $H$ in $d$ different points, which are actually the vertices of $H\cap Q_d$.

When $\vert H^-\cap V(Q_d)\vert=2$, let $v_1,v_2 \in V(Q_d)\cap H^-$ as in Figure \ref{Fig:CubeMenos4} b). Since $N(v_1)\cap N(v_2)=\emptyset$, otherwise the $1-$skeleton of $Q_d$ would have a triangle, all the edges adjacent to $v_1$ and $v_2$, except $\{v_1, v_2\}$, must intersect $H$ only in one point each. Then $H\cap Q_d$ has $2(d-1)$ vertices.

When $\vert H^-\cap V(Q_d)\vert=3$, let $v_1,v_2,v_3 \in V(Q_d)\cap H^-$ as in Figure \ref{Fig:CubeMenos4} c). Note that $N(v_1)\cap N(v_2)=\emptyset$, $N(v_3)\cap N(v_2)=\emptyset$, but $N(v_1)\cap N(v_1)=v_4$, for some $v_4\in H\cup H^+$. If $v_4\in H^+$, then the adjacent edges to $v_1, v_2, v_3$, except the ones totally contained in $H^-$, must intersect $H$ only in one point each. Then, $H\cap Q_d$ would have $3d-4$ vertices. If $v_4\in H$, the edges $\{v_1, v_4\}$ and $\{v_3, v_4\}$ would count just by one vertex, then $H\cap Q_d$ would have $3d-5$ vertices.

When $\vert H^-\cap V(Q_d)\vert=4$, we have three cases. Let $v_1,v_2,v_3,v_4 \in V(Q_d)\cap H^-$ as in Figure \ref{Fig:Cube4} a). Note that $N(v_1)\cap N(v_2)=\emptyset$, $N(v_2)\cap N(v_3)=\emptyset$, $N(v_3)\cap N(v_4)=\emptyset$, $N(v_1)\cap N(v_4)=\emptyset$, because the $1-$skeleton of $Q_d$ has not odd cycles. But $N(v_1)\cap N(v_3)=v_5$ and $N(v_2)\cap N(v_4)=v_6$, for some different vertices $v_5, v_6\in V(Q_d)$. Following the same analysis as above and depending on whether $v_5$ and $v_6$ are in $H$ or $H^+$ we have that the possible number of vertices of $H\cap Q_d$ would be $4d-6, 4d-7$ or $4d-8$ vertices.

Now, let $v_1,v_2,v_3,v_4 \in V(Q_d)\cap H^-$ as in Figure \ref{Fig:Cube4} b). Note that $N(v_1)\cap N(v_2)=\emptyset$, $N(v_1)\cap N(v_3)=\emptyset$, $N(v_1)\cap N(v_4)=\emptyset$, because the $1-$skeleton of $Q_d$ has not odd cycles. But $N(v_2)\cap N(v_3)=v_5$, $N(v_2)\cap N(v_4)=v_6$, and $N(v_3)\cap N(v_4)=v_7$ for some different $v_5, v_6, v_7\in V(Q_d)$. Following the same analysis as above and depending on whether $v_5$, $v_6$, and $v_7$ are in $H$ or $H^+$ we have that the possible number of vertices of $H\cap Q_d$ would be $4d-6, 4d-7$, $4d-8$ or $4d-9$ vertices.

Finally, let $v_1,v_2,v_3,v_4 \in V(Q_d)\cap H^-$ as in Figure \ref{Fig:Cube4} c). Note that $N(v_i)\cap N(v_j)=\emptyset$, for all $i,j=1,2,3,4$, with $i\neq j$. Then, all the edges adjacent to $v_i$ and $v_j$, except the ones $\{v_i, v_j\}$, must intersect $H$ only in one point each. Then $H\cap Q_d$ has $4(d-2)$ vertices.
\end{proof}

\begin{figure}[ht]
\centering
\tikzset{every picture/.style={line width=0.75pt}} %set default line width to 0.75pt        

\begin{tikzpicture}[x=0.75pt,y=0.75pt,yscale=-1.2,xscale=1.2]
%uncomment if require: \path (0,202); %set diagram left start at 0, and has height of 202

%Straight Lines [id:da7967923239285237] 
\draw [fill={rgb, 255:red, 0; green, 0; blue, 0 }  ,fill opacity=1 ]   (264.04,70.43) -- (263.78,130.39) ;
%Straight Lines [id:da9181364422639565] 
\draw [fill={rgb, 255:red, 0; green, 0; blue, 0 }  ,fill opacity=1 ]   (92.67,100.83) -- (153,70.17) ;
%Straight Lines [id:da4021466634834747] 
\draw [fill={rgb, 255:red, 0; green, 0; blue, 0 }  ,fill opacity=1 ]   (92.67,100.83) -- (153,130.17) ;
%Shape: Ellipse [id:dp5180344200551619] 
\draw  [color={rgb, 255:red, 0; green, 0; blue, 0 }  ,draw opacity=1 ][fill={rgb, 255:red, 0; green, 0; blue, 0 }  ,fill opacity=1 ][line width=2.25]  (91.06,100.83) .. controls (91.06,99.97) and (91.78,99.28) .. (92.67,99.28) .. controls (93.55,99.28) and (94.27,99.97) .. (94.27,100.83) .. controls (94.27,101.69) and (93.55,102.39) .. (92.67,102.39) .. controls (91.78,102.39) and (91.06,101.69) .. (91.06,100.83) -- cycle ;
%Straight Lines [id:da9011296488101013] 
\draw [fill={rgb, 255:red, 0; green, 0; blue, 0 }  ,fill opacity=1 ]   (92.67,100.83) -- (153.33,82.17) ;
%Straight Lines [id:da6328004325710441] 
\draw [fill={rgb, 255:red, 0; green, 0; blue, 0 }  ,fill opacity=1 ]   (92.67,100.83) -- (153,117.17) ;
%Shape: Ellipse [id:dp42537439242409525] 
\draw  [color={rgb, 255:red, 0; green, 0; blue, 0 }  ,draw opacity=1 ][fill={rgb, 255:red, 0; green, 0; blue, 0 }  ,fill opacity=1 ][line width=2.25]  (262.51,70.61) .. controls (262.51,69.75) and (263.23,69.06) .. (264.11,69.06) .. controls (265,69.06) and (265.71,69.75) .. (265.71,70.61) .. controls (265.71,71.47) and (265,72.17) .. (264.11,72.17) .. controls (263.23,72.17) and (262.51,71.47) .. (262.51,70.61) -- cycle ;
%Straight Lines [id:da532605293512124] 
\draw [fill={rgb, 255:red, 0; green, 0; blue, 0 }  ,fill opacity=1 ]   (264.11,70.61) -- (323.44,52.94) ;
%Straight Lines [id:da5471944361808075] 
\draw [fill={rgb, 255:red, 0; green, 0; blue, 0 }  ,fill opacity=1 ]   (264.11,70.61) -- (323.44,86.94) ;
%Straight Lines [id:da3901380149591175] 
\draw [fill={rgb, 255:red, 0; green, 0; blue, 0 }  ,fill opacity=1 ]   (263.78,130.39) -- (323.11,113.72) ;
%Straight Lines [id:da0943621951493061] 
\draw [fill={rgb, 255:red, 0; green, 0; blue, 0 }  ,fill opacity=1 ]   (263.78,130.39) -- (323.11,146.72) ;
%Shape: Ellipse [id:dp9988951640207012] 
\draw  [color={rgb, 255:red, 0; green, 0; blue, 0 }  ,draw opacity=1 ][fill={rgb, 255:red, 0; green, 0; blue, 0 }  ,fill opacity=1 ][line width=2.25]  (262.18,130.39) .. controls (262.18,129.53) and (262.89,128.83) .. (263.78,128.83) .. controls (264.66,128.83) and (265.38,129.53) .. (265.38,130.39) .. controls (265.38,131.25) and (264.66,131.94) .. (263.78,131.94) .. controls (262.89,131.94) and (262.18,131.25) .. (262.18,130.39) -- cycle ;
%Straight Lines [id:da14145726536759495] 
\draw [fill={rgb, 255:red, 0; green, 0; blue, 0 }  ,fill opacity=1 ]   (442.81,70.21) -- (442.56,130.17) ;
%Shape: Ellipse [id:dp9617050853268387] 
\draw  [color={rgb, 255:red, 0; green, 0; blue, 0 }  ,draw opacity=1 ][fill={rgb, 255:red, 0; green, 0; blue, 0 }  ,fill opacity=1 ][line width=2.25]  (441.29,70.39) .. controls (441.29,69.53) and (442,68.84) .. (442.89,68.84) .. controls (443.77,68.84) and (444.49,69.53) .. (444.49,70.39) .. controls (444.49,71.25) and (443.77,71.95) .. (442.89,71.95) .. controls (442,71.95) and (441.29,71.25) .. (441.29,70.39) -- cycle ;
%Shape: Ellipse [id:dp6156945667243441] 
\draw  [color={rgb, 255:red, 0; green, 0; blue, 0 }  ,draw opacity=1 ][fill={rgb, 255:red, 0; green, 0; blue, 0 }  ,fill opacity=1 ][line width=2.25]  (440.95,130.17) .. controls (440.95,129.31) and (441.67,128.62) .. (442.56,128.62) .. controls (443.44,128.62) and (444.16,129.31) .. (444.16,130.17) .. controls (444.16,131.03) and (443.44,131.73) .. (442.56,131.73) .. controls (441.67,131.73) and (440.95,131.03) .. (440.95,130.17) -- cycle ;
%Straight Lines [id:da3821382442310892] 
\draw [fill={rgb, 255:red, 0; green, 0; blue, 0 }  ,fill opacity=1 ]   (442.56,99.67) -- (502.22,90) ;
%Straight Lines [id:da509280883554293] 
\draw [fill={rgb, 255:red, 0; green, 0; blue, 0 }  ,fill opacity=1 ]   (442.56,99.67) -- (501.89,110) ;
%Shape: Ellipse [id:dp2918147540433127] 
\draw  [color={rgb, 255:red, 0; green, 0; blue, 0 }  ,draw opacity=1 ][fill={rgb, 255:red, 0; green, 0; blue, 0 }  ,fill opacity=1 ][line width=2.25]  (440.95,99.67) .. controls (440.95,98.81) and (441.67,98.11) .. (442.56,98.11) .. controls (443.44,98.11) and (444.16,98.81) .. (444.16,99.67) .. controls (444.16,100.53) and (443.44,101.22) .. (442.56,101.22) .. controls (441.67,101.22) and (440.95,100.53) .. (440.95,99.67) -- cycle ;
%Straight Lines [id:da9982703674173066] 
\draw [fill={rgb, 255:red, 0; green, 0; blue, 0 }  ,fill opacity=1 ]   (442.56,130.07) -- (502.22,120.4) ;
%Straight Lines [id:da7974015129866168] 
\draw [fill={rgb, 255:red, 0; green, 0; blue, 0 }  ,fill opacity=1 ]   (442.56,130.07) -- (501.89,140.4) ;
%Straight Lines [id:da24593341196185703] 
\draw [fill={rgb, 255:red, 0; green, 0; blue, 0 }  ,fill opacity=1 ]   (443.36,70.07) -- (503.02,60.4) ;
%Straight Lines [id:da42196680719961566] 
\draw [fill={rgb, 255:red, 0; green, 0; blue, 0 }  ,fill opacity=1 ]   (443.36,70.07) -- (502.69,80.4) ;

% Text Node
\draw (145,96) node [anchor=north west][inner sep=0.75pt]  [font=\scriptsize]  {$d$};
% Text Node
\draw (316,65) node [anchor=north west][inner sep=0.75pt]  [font=\scriptsize]  {$d-1$};
% Text Node
\draw (500,66) node [anchor=north west][inner sep=0.75pt]  [font=\scriptsize]  {$d-1$};
% Text Node
\draw (500,125) node [anchor=north west][inner sep=0.75pt]  [font=\scriptsize]  {$d-1$};
% Text Node
\draw (500,96) node [anchor=north west][inner sep=0.75pt]  [font=\scriptsize]  {$d-2$};
% Text Node
\draw (426.33,62.73) node [anchor=north west][inner sep=0.75pt]  [font=\scriptsize]  {$v_{1}$};
% Text Node
\draw (425.22,91.84) node [anchor=north west][inner sep=0.75pt]  [font=\scriptsize]  {$v_{2}$};
% Text Node
\draw (425.44,121.62) node [anchor=north west][inner sep=0.75pt]  [font=\scriptsize]  {$v_{3}$};
% Text Node
\draw (246.33,63.62) node [anchor=north west][inner sep=0.75pt]  [font=\scriptsize]  {$v_{1}$};
% Text Node
\draw (245.22,122.73) node [anchor=north west][inner sep=0.75pt]  [font=\scriptsize]  {$v_{2}$};
% Text Node
\draw (316,124) node [anchor=north west][inner sep=0.75pt]  [font=\scriptsize]  {$d-1$};
% Text Node
\draw (76.33,92.29) node [anchor=north west][inner sep=0.75pt]  [font=\scriptsize]  {$v_{1}$};
% Text Node
\draw (138,94) node [anchor=north west][inner sep=0.75pt]  [font=\large,rotate=-90.08] [align=left] {...};
% Text Node
\draw (310,64) node [anchor=north west][inner sep=0.75pt]  [font=\large,rotate=-90.08] [align=left] {...};
% Text Node
\draw (310,123) node [anchor=north west][inner sep=0.75pt]  [font=\large,rotate=-90.08] [align=left] {...};
% Text Node
\draw (494,124) node [anchor=north west][inner sep=0.75pt]  [font=\large,rotate=-90.08] [align=left] {...};
% Text Node
\draw (494,93) node [anchor=north west][inner sep=0.75pt]  [font=\large,rotate=-90.08] [align=left] {...};
% Text Node
\draw (494,64) node [anchor=north west][inner sep=0.75pt]  [font=\large,rotate=-90.08] [align=left] {...};

\draw (120,166) node [anchor=north west][inner sep=0.75pt]  [font=\scriptsize]  {$a)$};
\draw (290,166) node [anchor=north west][inner sep=0.75pt]  [font=\scriptsize]  {$b)$};
\draw (465,166) node [anchor=north west][inner sep=0.75pt]  [font=\scriptsize]  {$c)$};

\end{tikzpicture}
\caption{Configurations when $\vert H^-\cap V(Q_d)\vert <4$}
\label{Fig:CubeMenos4}
\end{figure}

\begin{figure}[ht]
\centering
\input{Figures/grados_cubo2.tikz}
\caption{Configurations when $\vert H^-\cap V(Q_d)\vert =4$}
\label{Fig:Cube4}
\end{figure}

\begin{lemma}\label{Lemma:AtLeast4d-9}
    Let $Q_d$ the $d$-dimensional hypercube, with $d\geq 4$, and $H\in \mathcal{H}_{Q_d}$. If each half-space induced by $H$ has at least $4$ vertices of $Q_d$, then $cv_{Q_d}(H)\geq 4d-9$.
\end{lemma}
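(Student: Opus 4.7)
The plan is to bootstrap Lemma \ref{Lemma:HypercubeFirstCuts} past its largest case, $|H^{-}\cap V(Q_d)|=4$, by induction on $k:=|H^{-}\cap V(Q_d)|$; without loss of generality assume $|H^{-}\cap V(Q_d)|\le|H^{+}\cap V(Q_d)|$. The base case $k=4$ is exactly the last row of the table in Lemma \ref{Lemma:HypercubeFirstCuts}, which gives $cv_{Q_d}(H)\in\{4d-6,4d-7,4d-8,4d-9\}$, hence $\geq 4d-9$.

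For the inductive step $k\ge 5$, I would slide $H$ parallel to itself in the direction of its normal $u$, moving it just enough so that the vertex $v\in H^{-}\cap V(Q_d)$ maximizing $u\cdot v$ crosses into the opposite half-space. The resulting hyperplane $H'$ misses every vertex of $Q_d$ and satisfies $|H'^{-}\cap V(Q_d)|=k-1\ge 4$, so the inductive hypothesis gives $cv_{Q_d}(H')\ge 4d-9$. It then suffices to show $cv_{Q_d}(H)\ge cv_{Q_d}(H')$. Using the identity
\[
cv_{Q_d}(H)=|H\cap V(Q_d)|+e(S,T),\qquad S=H^{-}\cap V,\ T=H^{+}\cap V,
\]
together with Proposition \ref{Proposition:Neighbors} (each $w\in H\cap V(Q_d)$ has a neighbor on each open side), I would carry out a local accounting at $v$: each edge from $v$ to a neighbor in $H^{+}$ that contributed a transversal crossing to the $H$-slice is traded, after the slide, for an edge from $v$ to a neighbor in $H^{-}$ contributing a transversal crossing to the $H'$-slice, and every vertex of $H\cap V(Q_d)$ adjacent to $v$ converts into a transversal crossing by $H'$. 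Since $Q_d$ has girth $4$, these swaps are vertex-disjoint and the net effect is nonnegative.

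A complementary tool is Hart's edge-isoperimetric inequality $\partial S\ge g_d(|S|)=d|S|-2\sum_{i=1}^{|S|-1}s(i)$, where $s(i)$ is the number of ones in the binary expansion of $i$. Combined with $\partial S=e(S,T)+e(S,W)$, where $W=H\cap V(Q_d)$, and the bound $|N(w)\cap S|\le d-1$ for each $w\in W$ (by Proposition \ref{Proposition:Neighbors}, since $w$ has at least one neighbor in $T$), one obtains
\[
cv_{Q_d}(H)=|W|+e(S,T)\ge g_d(|S|)-(d-2)|W|.
\]
When $|W|=0$ this already yields $cv_{Q_d}(H)\ge g_d(4)=4d-8$, which is stronger than the target. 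When the normal $u$ has one or more zero coordinates, Proposition \ref{Pro:ZeroEntries} reduces the problem to a lower-dimensional sub-hypercube with a multiplicative factor $2^{i}\ge 2$ that comfortably absorbs the gap between the $(d-i)$-dimensional bound and $4d-9$.

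The main obstacle is the case where $H$ passes through many vertices of $Q_d$ and $e(S,T)$ is small—typified by the level-sum hyperplanes $\{x_1+\cdots+x_d=j\}$, for which $e(S,T)=0$ and $cv_{Q_d}(H)=\binom{d}{j}$. One must verify directly that whenever such a slice has $|S|,|T|\ge 4$ the binomial coefficient is at least $4d-9$. Here Lemma \ref{Lemma:NoAntipodal}—every antipode of a vertex in the smaller side lies in the larger side—is the right tool, since it constrains the balance between $|S|$, $|T|$, and $|W|$ enough to match the growth of $\binom{d}{j}$ against $4d-9$ in the residual combinatorial cases.
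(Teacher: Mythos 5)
There is a genuine gap, and it sits at the heart of your argument: the monotonicity claim $cv_{Q_d}(H)\geq cv_{Q_d}(H')$ that drives the induction is false. When you slide $H$ past the vertex $v$ of $H^-$ maximizing $u\cdot v$, the crossings lost are the edges from $v$ into $H\cup H^+$ and the crossings gained are the edges from $v$ into $H^-$; these two quantities need not be equal, so the ``trade'' in your local accounting is not one-for-one. Concretely, take $d=5$, $u=(5,1,1,1,1)$ and $H=\{u\cdot x=4.5\}$, so that $H^-$ and $H^+$ are the two facets $\{x_1=0\}$ and $\{x_1=1\}$ and $cv_{Q_5}(H)=16$. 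The maximizer in $H^-$ is $v=(0,1,1,1,1)$; sliding to $H'=\{u\cdot x=3.5\}$ pushes $v$ across, and now $cv_{Q_5}(H')=15+4=19>16$, because $v$ has four neighbors in $H'^-$ but only one in $H^+$. So the inequality you need points the wrong way, and the induction collapses. (This example is not a counterexample to the lemma itself, but it kills the inductive step.) A secondary, also unresolved, gap is the case $W=H\cap V(Q_d)\neq\emptyset$: your Hart-type bound $cv_{Q_d}(H)\geq g_d(|S|)-(d-2)|W|$ degenerates for large $|W|$, and you explicitly defer the remaining verification rather than carry it out; note also that Lemma \ref{lemma 1} cannot be used to reduce to $W=\emptyset$ here, since it gives $cv_{Q_d}(H')\geq cv_{Q_d}(H)$, which is the wrong direction for a lower bound on $cv_{Q_d}(H)$.

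For contrast, the paper's proof avoids any sweeping/monotonicity argument. It locates four pairwise disjoint sub-hypercubes $Q^1_{d_1},\dots,Q^4_{d_4}$ of $Q_d$, each containing a pair of vertices that are antipodal within that subcube and separated by $H$; Menger's theorem then yields $d_i$ internally disjoint paths per subcube, each forced to contribute a distinct vertex to the slice (whether it crosses an edge transversally or passes through a vertex lying on $H$), giving $cv_{Q_d}(H)\geq d_1+\cdots+d_4$. A square in $H^-$ yields four $(d-2)$-subcubes and the bound $4d-8$; the extremal configuration (the origin plus three weight-one vertices, arising when $u$ has all nonzero entries and $H^-$ contains no vertex of weight two or more) yields subcubes of dimensions $d-1,d-2,d-3,d-3$ and the bound $4d-9$; the cases where $u$ has zero entries are handled via Proposition \ref{Pro:ZeroEntries} by exhibiting a square in $H^-$. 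If you want to rescue your approach, you would need either a correct replacement for the monotonicity step or a direct treatment of all $k\geq 5$ (and all $|W|>0$) configurations; the disjoint-subcube/Menger idea is the mechanism the paper uses to do exactly that.
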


\begin{proof}
The key idea of this proof is to find pairwise disjoint sub-hypercubes $Q^1_{d_1},\dots, Q^r_{d_r}$ contained in $Q_d$, with $1\leq d_1\leq \dots \leq d_r<d$ and some positive integer $r$, such that each of them contains at least one pair of antipodal vertices (antipodal restricted to each of these sub-hypercubes, not antipodal in $Q_d$) in different half-spaces. This would imply, by Menger's Theorem (for more details see \cite{bondy2008graph}, that $$cv_{Q_d}(H)\geq \sum_{i=1}^r d_i,$$ since $Q_{d_i}$ has $d_i$ disjoint paths between any pair of antipodal vertices. 

Assume $\vert H^-\cap V(Q_d)\vert \leq \vert H^+\cap V(Q_d)\vert$. We begin with the following claim: if $H^-$ contains a square in the 1-skeleton of $Q_d$, then $cv_{Q_d}(H)\geq 4d-8$.

Indeed, without loss of generality (by relabeling) suppose the vertices $(0,0,\dots,0),$ $ (1,0,\dots,0),$ $ (0,1,0,\dots,0), $ $(1,1,0,\dots,0)\in H^-.$ 

Then, by Lemma \ref{Lemma:NoAntipodal}, their antipodals points $(1,1,\dots,1),$ $ (0,1,\dots,1),$ $ (1,0,1,\dots,1), $ $(0,0,1,\dots,1)\in H^+.$ Let $Q^{ij}_{d-2}$ be the sub-hypercubes of $Q_d$ with the first entry fixed as $i$ and the second entry as $j$, for $i,j=0,1$. These four sub-hypercubes are pairwise disjoint, and 

$$(0,0,\dots,0), (0,0,1,\dots,1)\in Q^{00}_{d-2}, $$
$$(1,0,\dots,0), (1,0,1,\dots,1)\in Q^{10}_{d-2}, $$
    $$(0,1,0,\dots,0), (0,1,\dots,1)\in Q^{01}_{d-2}, $$
$$(1,1,0,\dots,0), (1,1,\dots,1)\in Q^{11}_{d-2},$$

which are antipodal pairs of points in each hypercube. Therefore, $cv_{Q_d}(H)\geq 4(d-2)$, proving the claim.

Now, we proceed by cases:

\textbf{Case 1:} Suppose that $u$ has no zero entries. By Corollary \ref{Corollary:NonZero}, Proposition \ref{Proposition:equivalenceQd}, and Theorem \ref{Thm:antichain}, there exist a $u'\in \mathbb{S}^{d-1}$ with all positive entries and a hyperplane $H':\{X: X\cdot u'=t'\}$, for some $t'\in\mathbb{R}$, such that the maximal antichain in $(Q_u^*,\leq)$ induced by $H$ is equivalent to the maximal antichain in $(Q_u'^*,\leq)$ induced by $H'$.

Since $H'^-$ contains at least four vertices, we have two sub-cases:

\textbf{Sub-case 1.1:} Suppose that $H^-$ contains a vertex of $Q_d$ with at least two positive entries. Then, by the structure of $(Q_u'^*,\leq)$, there is a $a(v)\in H'^-\cap V(Q_d)$ with exactly two positive entries. Without loss of generality (by relabeling), suppose $a(v)=(1,1, 0, \cdots,0)$. Then $(0,0,\dots,0)$, $(1,0,\dots,0)$, $(0,1,0,\dots,0)\in H'^-$. 

Thus, $H^-$ contains a square, and form the earlier claim, it follows that $cv_{Q_d}(H)\geq 4d-8$. 

\textbf{Sub-case 1.2:} Suppose all vertices of $Q_d$ in $H'^-$ have at most two positive entries. Then the vertex $(0,0,\dots,0)$ and at least three vertices with a single positive entry lie in $H^-$. Without loss of generality (by relabeling), assume $(1,0,\dots,0)$, $(0,1,0,\dots,0), (0,0,1,0,\dots,0)\in H'^-$.

We define the following pairwise disjoint sub-hypercubes: 

\begin{itemize}
    \item $Q^1_{d-1}$, whose vertices have the first entry equal to 1.
    \item $Q^2_{d-2}$, whose vertices have the first entry equal to 0 and the second equal to 1.
    \item $Q^3_{d-3}$, whose vertices have the first and second entries equal to 0 and the third equal to 1.
    \item $Q^4_{d-3}$, whose vertices have the first, second, and third entries equal to 0.
\end{itemize}

Each of the four points above lies in one of these subcubes, and their respective antipodal vertices (restricted to each sub-hypercube) must lie in $H^+$, since $H'$ only contains vertices with at most two positive entries. Therefore, using the key idea, we obtain $cv_{Q_d}(H')=cv_{Q_d}(H)\geq 4d-9$.

\textbf{Case 2:} For this and the next case, we use the notation of Proposition \ref{Pro:ZeroEntries}. Suppose that $u$ has exactly one zero entry. Let $Q^1_{d-1}$ and $Q^2_{d-1}$ be the sub-hypercubes as in Proposition \ref{Pro:ZeroEntries}. Then $\vert V(Q^1_{d-1})\cap H^-\vert\geq 2$, which means that $H^-$ contains at least one edge of $Q^1_{d-1}$. Let $\{v_1, v_2\}$ be such an edge.

Then, $\{w^2_1, w^2_2\}$ is an edge, with $w^2_1= V(Q^{2}_{d-1})\cap V_{v_1}$ and $w_2= V(Q^{2}_{d-1})\cap V_{v_2}$ (see item 4. in the proof of Proposition \ref{Pro:ZeroEntries}). Furthermore,  since $v_i$ and $w_i^2$ only have the entry different where $u$ has the zero entry, for $i=1,2$, the four vertices $v_1, v_2, w^2_1$ and $w^2_2$ define a square in $H^-$. Therefore, $cv_{Q_d}(H)\geq 4d-8$.

\textbf{Case 3:}  Suppose that  $u$ has $k$ entries equal to zero entries, with $k\geq 2$. Let $Q^r_{d-1}$, for $r=1,2,3,4$, be the sub-hypercubes as in Proposition \ref{Pro:ZeroEntries}. Since $\vert V(Q^1_{d-k})\cap H^-\vert\geq 1$, there is a vertex $v\in V(Q^1_{d-k})\cap H^-$.

For each $r=1,2,3,4$, define $v_r\in V_{v}\cap Q^r_{d-1}$ (note that $v=v_1$), then $v_r\in H^-$, for all $r$, and the four vertices $v_1, v_2, v_3, v_4$ form a square in the $1-$skeleton of $Q_d$. Therefore, it follows that $cv_{Q_d}(H)\geq 4d-8$. 

\end{proof}

\begin{proof}[ Proof of Theorem \ref{Thm:FirstGAPS}]
    By Lemmas \ref{Lemma:HypercubeFirstCuts} and \ref{Lemma:AtLeast4d-9}, $m\leq 4d-10$ is a gap if $m$ is not in $I=\{2^{i}, \text{ for }0\leq i\leq d-1\}\cup\{d, 2d-2, 3d-5, 3d-4\}$.
\end{proof}

%Now we focus on proving Theorem \ref{Thm:gap_penultimo}. Define $$f^{i}_{Q_d}=\{\max cv_{Q_d}(H): \text{ for all $H\in \mathcal{H}_{Q_d}$, with normal vectors with $i$ zero entries}\}.$$

% \begin{lemma}\label{Lemma:MaxPoset}
%     Let $Q_d$ the $d$-dimensional hypercube, with $d\geq 3$. Then, $$\max f^{i}_{Q_d}= d-i-\lceil (d-i)/2 \rceil \binom{d-i}{\lfloor (d-i)/2 \rfloor}\cdot 2^{i},$$
%     and $f^{i}_{Q_d}=\nu (Q_d)$ if and only if $i=0$.
% \end{lemma}
% \begin{proof}
% \end{proof}

Regarding Theorem \ref{Thm:gap_penultimo}, recall that the size of the largest antichain in a poset is called \emph{the width}. By Lemma \ref{lemma 1} and O'Neil's result in \cite{o1971hyperplane}, the width of $(Q_{d_(1,\dots, 1)}, \leq)$ is $\lceil d/2 \rceil \binom{d}{\lfloor d/2 \rfloor}$, furthermore, when $u$ has entries equal to zero, there are no slices with $\lceil d/2 \rceil \binom{d}{\lfloor d/2 \rfloor}$ vertices. In the following lemma, we will show the structure of the largest antichains in the basic poset of the hypercube.

\begin{lemma}\label{lemma:central_edges}
The width of $(Q_{d{(1,\dots, 1)}}, \leq)$ is $\lceil d/2 \rceil \binom{d}{\lfloor d/2 \rfloor}$. Moreover, if $A \subset (Q_{d(1,\dots, 1)}, \leq)$ is an antichain of size equal to the width, then $A$ only contains edges located at the central levels. That is, $e \in A$ if and only if $e \in E_{\lceil d/2 \rceil} \cup E_{\lfloor d/2 \rfloor}$, where
\[
E_i = \left\{e = (v_1, v_2) \in E(Q_d) : \|v_1\| + 1 = \|v_2\| = i\right\}, \quad i = 1, 2, \dots, d.
\]
\end{lemma}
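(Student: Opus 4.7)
The plan is to prove this lemma via a LYM-type double-counting argument against the maximal chains of $(Q_{d(1,\ldots,1)},\leq)$. By Proposition \ref{Proposition:equivalenceQd} I identify the slicing poset with the extended O'Neil poset: its elements are the pairs $(A,B)$ with $A\subseteq B\subseteq[d]$ and $|B\setminus A|\leq 1$, ordered by $(A_1,B_1)\leq (A_2,B_2)$ iff $B_1\subseteq A_2$. In this description each maximal chain corresponds bijectively to a linear ordering $(x_1,\ldots,x_d)$ of $[d]$ through
\[
(\emptyset,\emptyset)<(\emptyset,\{x_1\})<(\{x_1\},\{x_1\})<(\{x_1\},\{x_1,x_2\})<\cdots<([d],[d]),
\]
so the poset has exactly $d!$ maximal chains, each containing one vertex from every level $0,1,\ldots,d$ and one edge from every $E_i$.

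Next I would count the chains through each element: a vertex $(v,v)$ with $|v|=i$ appears in the $i!(d-i)!$ chains whose first $i$ entries enumerate $v$, while an edge in $E_i$ appears in the $(i-1)!(d-i)!$ chains whose first $i-1$ entries enumerate the lower endpoint and whose $i$th entry completes the upper one. Since any antichain $A$ meets each maximal chain in at most one element, summing these counts over $A$ and dividing by $d!$ yields the LYM-type inequality
\[
\sum_{i=0}^{d}\frac{a_i}{\binom{d}{i}}+\sum_{i=1}^{d}\frac{b_i}{i\binom{d}{i}}\leq 1,
\]
where $a_i$ counts the vertices of $A$ at level $i$ and $b_i$ counts its edges in $E_i$.

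Setting $M:=\lceil d/2\rceil\binom{d}{\lfloor d/2\rfloor}$ and using the bound $\max\{\binom{d}{i},\,i\binom{d}{i}\}\leq M$ for every $i$ (standard unimodality of $i\mapsto i\binom{d}{i}$, together with $\binom{d}{i}\leq\binom{d}{\lfloor d/2\rfloor}\leq M$), the inequality immediately gives $|A|=\sum_i(a_i+b_i)\leq M$. To see that this bound is sharp, it suffices to exhibit $E_{\lceil d/2\rceil}$ itself, whose $M$ edges all share the same upper level and are therefore pairwise incomparable; this establishes the width claim.

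Finally, for the characterization of equality, tightness of the LYM inequality forces each nonzero $a_i$ to satisfy $\binom{d}{i}=M$ and each nonzero $b_i$ to satisfy $i\binom{d}{i}=M$. Since $\binom{d}{i}<M$ for $d\geq 3$, every $a_i$ must vanish, so $A$ consists exclusively of edges; and the equation $i\binom{d}{i}=M$ pins down exactly the central edge levels, yielding $A\subseteq E_{\lceil d/2\rceil}\cup E_{\lfloor d/2\rfloor}$. The main subtlety I anticipate is the unimodality step isolating which $i$ attain $i\binom{d}{i}=M$: the ratio $\tfrac{(i+1)\binom{d}{i+1}}{i\binom{d}{i}}=\tfrac{d-i}{i}$ reduces this to routine casework in the parity of $d$, where the odd case pins the maximum down to a single index and the even case requires careful bookkeeping of the two levels where $i\binom{d}{i}$ attains its common maximum.
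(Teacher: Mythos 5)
Your proposal is correct in substance and is essentially the paper's own argument recast in LYM language: the paper likewise counts the $d!$ monotone paths from $\mathbf{0}$ to $\mathbf{1}$ (your maximal chains), uses the counts $i!\,(d-i)!$ for a vertex at level $i$ and $(i-1)!\,(d-i)!$ for an edge in $E_i$, and exploits the fact that distinct elements of an antichain lie on disjoint sets of such paths to get both the width bound and the equality characterization. Your version is the more careful of the two on the equality analysis, since the paper merely asserts where $(d-i)!\,(i-1)!$ is minimized, whereas you reduce it to the ratio $\tfrac{(i+1)\binom{d}{i+1}}{i\binom{d}{i}}=\tfrac{d-i}{i}$. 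But do not defer the even-$d$ bookkeeping: that ratio equals $1$ exactly at $i=d/2$, so for even $d$ the equation $i\binom{d}{i}=M$ is solved by \emph{both} $i=d/2$ and $i=d/2+1$, and indeed $E_{d/2+1}$ is itself a maximum antichain. The correct conclusion of your equality step is therefore $A\subseteq E_{d/2}\cup E_{d/2+1}$ for even $d$, which is not the set $E_{\lceil d/2\rceil}\cup E_{\lfloor d/2\rfloor}=E_{d/2}$ that your last sentence (and the lemma as stated) names; the paper's proof carries the same slip, asserting the minimizing levels to be $\{d/2,\,d/2-1\}$. This does not affect the downstream use in Theorem \ref{Thm:gap_penultimo}, which only needs that extremal antichains consist of edges incident to the middle vertex layer, but your write-up should land on the right pair of indices. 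Finally, keep your $d\geq 3$ caveat explicit: for $d=2$ one has $\binom{d}{1}=M$ and the middle vertex layer is a maximum antichain, so the ``no vertices'' conclusion genuinely fails there.
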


\begin{proof}
    In \cite{o1971hyperplane}, O'Neil proved that the slice intersecting the largest possible number of edges in $Q_d$ crosses exactly $\lceil d/2 \rceil \binom{d}{\lfloor d/2 \rfloor}$ edges, by exhibiting a slice that only intersects edges in the central levels. This set of edges forms an antichain in our poset. Moreover, from Lemma~\ref{lemma 1} and Theorem~\ref{Thm:antichain}, we deduce that this is the maximum possible size for an antichain. It remains to show that any other maximum-size antichain must be of this form; that is, it does not contain vertices of $Q_d$ and only contains edges from the central levels.

    We define the vertex levels as follows:
    \[
    V_i = \left\{ v \in V(Q_d) : \|v\| = i \right\}, \quad i = 0, 1, \dots, d.
    \]

    Note that if $v$ is a vertex at level $i$, then there are exactly $(d - i)! \, i!$ distinct paths from $0$ to $1$ passing through $v$, while if $e$ is an edge of $Q_d$ at level $i$, there are exactly $(d - i)! \, (i - 1)!$ such paths passing through $e$. Observe that, for edges, the number of paths passing through them is minimized when $i = \lceil d/2 \rceil$ if $d$ is odd, and when $i \in \{d/2, d/2 - 1\}$ if $d$ is even; that is, when the edges are in the central levels. Moreover, any vertex, regardless of its level, lies on more paths than an edge in the central levels.

    Now, aiming for a contradiction with the total number of paths from $0$ to $1$, suppose there exists an antichain $A$ of size $\lceil d/2 \rceil \binom{d}{\lfloor d/2 \rfloor}$ that contains vertices or non-central edges. Since no two elements in an antichain can lie on the same path from $0$ to $1$, the sets of paths passing through each element of $A$ must be disjoint. Then, by counting the total number of distinct paths from $0$ to $1$ passing through elements of $A$, we get a number strictly greater than:
    \[
    \lceil d/2 \rceil \binom{d}{\lfloor d/2 \rfloor} (\lfloor d/2 \rfloor)!^2 = d!, \quad \text{if $d$ is odd},
    \]
    \[
    \lceil d/2 \rceil \binom{d}{\lfloor d/2 \rfloor} (d/2)! \, (d/2 - 1)! = d!, \quad \text{if $d$ is even}.
    \]
    
    which is a contradiction, since the total number of distinct paths from $0$ to $1$ is exactly $d!$.
\end{proof}

% \begin{theorem} \label{gap_penultimo}
%     For $d>4$ even, the number $d/2\binom{d}{d/2}-1$ is a gap in the sequences of the number of vertices in the slices of the hypercube $Q_d$.
% \end{theorem}

Now we proceed to prove Theorem \ref{Thm:gap_penultimo}.

\begin{proof} [Proof of Theorem \ref{Thm:gap_penultimo}]
Suppose there is a hyperplane $ H $ that induces a slice with exactly 
$\frac{d}{2} \binom{d}{d/2} - 1$ vertices. Since $ d $ is even, this number is odd because:
\[
\frac{d}{2} \binom{d}{d/2} - 1 = \frac{d}{2} \cdot 2 \binom{d-1}{d/2-1} - 1 = d \binom{d-1}{d/2-1} - 1.
\]

By Proposition \ref{prop:even_cut}, this hyperplane $ H $ must contain at least one vertex of $ Q_d $. Furthermore, by Lemma \ref{lemma 1}, we can continuously adjust $ H $ to obtain a new hyperplane $ H' $ that does not contain any vertices of $ Q_d $, while ensuring that the slice $ H' \cap Q_d $ contains at least as many vertices as $ H \cap Q_d $. In this case, $ H' \cap Q_d $ must achieve exactly $ \frac{d}{2} \binom{d}{d/2} $ vertices, which implies that $ H' $ induces a maximal antichain in the poset $ (P_u, \leq) $ for some $ u $.

After maybe a relabeling of the vertices of $Q_d$ by \ref{lemma:central_edges}, we can assume that the slice determined by $ H'$ intersects edges in the central levels. This implies that by moving $ H' $ in the opposite direction to the adjustment made to $ H $, at some point, two edges will converge at a vertex. Observe that for each vertex $ u $, such that $ e = \{u, v\} \in E_i $, $ u $ is incident to exactly $ d-i $ edges in $ E_i $ and $ i $ edges in $ E_{i-1} $.

This observation implies that for $ d > 4 $, vertices that are endpoints of edges in the central levels have more than two edges in each of the consecutive levels they belong to. Consequently, moving $ H' $ back toward $ H $ does not decrease the number of vertices induced by the slice by one. Therefore, such a configuration is not possible.
\end{proof}

\subsection{The VSS of low dimensional hypercubes}

For several years, the goal of obtaining all the combinatorial types of slices of the hypercube has been a significant computational problem. For dimensions 2 and 3, all combinatorial slices are known, which are relatively easy to compute, and for dimension 4 and 5 all the slices have been characterized in \cite{fukuda1997sections}. Although we do not present the combinatorial types of slices of $Q_6$ and $Q_7$, we present their exact vertex sequences of slices, completing Table \ref{tab:VSS} up to dimension 7. We also know that Marie-Charlotte Brandenburg and Chiara Meroni are currently working on the explicit enumeration of all combinatorially distinct slices of $Q_6$.

We prove Corollary \ref{Corollary:TableUpto7}

\begin{proof}
The VSS of the hypercubes up to dimension 5 are already compute as we said above. Then, we only need to compute the VSS of the hypercubes of dimensions 6 and 7.

For dimension 6, we apply Theorem \ref{Thm:FirstGAPS} and Theorem \ref{Thm:gap_penultimo} to prove that $3, 5, 7, 9, 11, 12$ and $59$ are gaps. For dimension 7, we apply only Theorem \ref{Thm:FirstGAPS} to prove that $3, 5, 6, 9, 10, 11, 13, 14, 15, 18$ are gaps.

We recall that $\nu(Q_6)=60$ and $\nu(Q_7)=140$, as shown in \cite{o1971hyperplane}. To prove that these are the only missing values, we explicitly construct slices realizing all other values listed in Table~1 using the code available at \href{https://github.com/AntonioJTH/slicing_polytopes}{\texttt{https://github.com/AntonioJTH/slicing\_polytopes}}. This Python program computes slices of the hypercube along selected directions. For each direction, the program generates all possible slices and records the number of vertices in each. The output consists of all values of $cv_P$ realized by some hyperplane orthogonal to one of the given directions.
\end{proof}

{\bf Acknowledgments:} We thank Anouk Brose, Chiara Meroni,  and Marie Brandenburg for their comments. This research was partially supported by NSF Grant DMS-2348578, NSF Grant DMS-2434665, and NSF Grant DMS-1929284 of ICERM.
\bibliographystyle{plainurl}
\bibliography{main.bib}

\appendix

\end{document}